\documentclass{amsart}

\usepackage{amssymb}
\usepackage{hyperref}
\hypersetup{colorlinks=true, citecolor=blue}

\newtheorem{theorem}{Theorem}[section]
\newtheorem{lemma}[theorem]{Lemma}
\newtheorem{proposition}[theorem]{Proposition}
\newtheorem{corollary}[theorem]{Corollary}
\newtheorem{othertheorem}{Theorem}

\theoremstyle{definition}
\newtheorem{definition}[theorem]{Definition}
\theoremstyle{remark}
\newtheorem{remark}[theorem]{Remark}

\begin{document}

\title{$\mathcal H$-Harmonic Bergman-Besov Spaces on the Real Hyperbolic Ball}

\author{A. Ers$\dot{\hbox{\i}}$n \"Ureyen}

\address{Department of Mathematics, Faculty of Science,
Eski\c{s}ehir Technical University, 26470, Eski\c{s}ehir,
Turkey}
\email{aeureyen@eskisehir.edu.tr}

\thanks{This research is supported by Eski\c{s}ehir Technical University
Research Fund under grant 24ADP108.}

\subjclass{Primary 31C05; Secondary 46E22}

\keywords{Real hyperbolic ball, hyperbolic harmonic function,
Bergman-Besov spaces}

\date{\today}

\begin{abstract}
Using characterizations in terms of various differential
operators, including partial, normal, and tangential
derivatives, we extend the family of Bergman spaces of
$\mathcal H$-harmonic functions on the real hyperbolic ball
from the range $\alpha>-1$ to all $\alpha\in\mathbb R$.
We then generalize several properties of Bergman spaces;
projection, duality, atomic decomposition, and inclusion relations,
to this extended family.
\end{abstract}

\maketitle

\section{Introduction}

For $n\ge 2$, let $\mathbb B\subseteq\mathbb R^n$ be the unit ball and
$\mathbb S=\partial\mathbb B$ the unit sphere.
The real hyperbolic ball is $\mathbb B$ equipped with the hyperbolic metric
\begin{equation*}
ds^2=\frac{4}{(1-\vert x\vert^2)^2}\sum_{i=1}^ndx_i^2.
\end{equation*}
The Laplacian $\Delta_h$ with respect to the hyperbolic metric is given by
\begin{equation*}
\Delta_h f(a)=\Delta (f\circ\varphi_a)(0)
\qquad (f\in C^2(\mathbb B)),
\end{equation*}
where
\begin{equation*}
\varphi_a(x)=\frac{a\vert x-a\vert^2+(1-\vert a\vert^2)(a-x)}
{1-2\langle x,a\rangle+\vert a\vert^2\vert x\vert^2}
\qquad (x\in\mathbb B)
\end{equation*}
is an involutive M\"obius automorphism of $\mathbb B$ that exchanges $a$ and $0$.
A straightforward calculation shows that
\begin{equation*}
\Delta_h f(x)=(1-\vert x\vert^2)^2\Delta f(x)
+2(n-2)(1-\vert x\vert^2)\langle x,\nabla f(x)\rangle.
\end{equation*}
Here, $\Delta=\sum_{i=1}^n \partial^2/\partial x_i^2$ is the Euclidean Laplacian,
$\nabla=(\partial/\partial x_1,\ldots,\partial/\partial x_n)$ is the Euclidean
gradient, and $\langle x,y\rangle=\sum_{i=1}^n x_iy_i$ is the Euclidean inner product.

Functions that are annihilated by $\Delta_h$ are called hyperbolic harmonic or
$\mathcal H$-harmonic.
We denote the space of all $\mathcal H$-harmonic functions by $\mathcal H(\mathbb B)$,
and refer the reader to \cite{St1} for the properties of $\Delta_h$ and
$\mathcal H$-harmonic functions.

Let $\nu$ and $\sigma$ denote the volume and surface measures on $\mathbb B$
and $\mathbb S$, normalized so that $\nu(\mathbb B)=1$ and $\sigma(\mathbb S)=1$.
For $\alpha\in\mathbb R$, we define the weighted measures
\begin{equation*}
  d\nu_\alpha(x)=\frac{1}{V_\alpha}(1-\vert x\vert^2)^\alpha d\nu(x)
\end{equation*}
which are finite only when $\alpha>-1$ and in this case
we set
\begin{equation*}
V_\alpha=\frac{\Gamma(n/2+1)\Gamma(\alpha+1)}{\Gamma(n/2+\alpha+1)},
\end{equation*}
so that $\nu_\alpha(\mathbb B)=1$.
When $\alpha\le -1$, we set $V_\alpha=1$.
For $\alpha\in\mathbb R$ and $1\le p<\infty$, we denote the Lebesgue spaces
with respect to $\nu_\alpha$ by $L^p_\alpha(\mathbb B)=L^p_\alpha$.

For $\alpha>-1$ and $1\le p<\infty$, the $\mathcal H$-harmonic Bergman
space $\mathcal B^p_\alpha$ is $\mathcal H(\mathbb B)\cap L^p_\alpha$,
that is,
\begin{equation*}
\mathcal B^p_\alpha
=\bigl\{f\in\mathcal H(\mathbb B)\colon\|f\|^p_{\mathcal B^p_\alpha}
=\int_{\mathbb B}\vert f(x)\vert^p\,d\nu_\alpha(x)<\infty\bigr\}.
\end{equation*}
These are Banach spaces on which point evaluations are bounded.
We refer to \cite[Chapter 10]{St1} for their basic properties.

The first aim of this paper is to characterize $\mathcal H$-harmonic
Bergman functions in terms of various differential operators, including
partial, normal, and tangential derivatives.
This will allow us to extend the Bergman space family from $\alpha>-1$ to
all $\alpha\in\mathbb R$.
This extended family will be called the Bergman-Besov spaces.
We will then generalize some known properties of Bergman spaces to
this extended family.

For partial derivatives, we use multi-index notation.
For $\kappa=(\kappa_1,\ldots,\kappa_n)$ with $\kappa_i$ non-negative
integer, we write
\begin{equation*}
\partial^\kappa f
=\frac{\partial^{\vert\kappa\vert}f}
{\partial x_1^{\kappa_i}\cdots\partial x_n^{\kappa_n}},
\end{equation*}
where $\vert\kappa\vert=\kappa_1+\cdots+\kappa_n$.
We also write $x^\kappa=x_1^{\kappa_1}\cdots x_n^{\kappa_n}$.
The normal (radial) derivative $N$ is defined by
\begin{equation*}
  Nf(x)=\langle x,\nabla f(x)\rangle,
\end{equation*}
and for $k\ge 2$, we define $N^k f=N(N^{k-1}f)$.
For $1\le i<j\le n$, the tangential derivative $T_{i,j}$ is
defined by
\begin{equation*}
T_{i,j}f(x)
=x_i\frac{\partial f}{\partial x_j}-x_j\frac{\partial f}{\partial x_i}.
\end{equation*}
For $k\ge 1$, we denote the set of all tangential derivatives of
order $k$ by $\mathcal T^k$,
\begin{equation*}
\mathcal T^k=\{T_{i_1,j_1}\circ\dots\circ T_{i_k,j_k}\colon
1\le i_l<j_l\le n\}.
\end{equation*}
For $k=0$ we set $\mathcal T^0=\{\textup{Id.}\}$.

We will also employ another type of differential operators
denoted by $D^t_s$ ($s,t\in\mathbb R$) that act as coefficient
multipliers on the series expansion of $f\in\mathcal H(\mathbb B)$.
Let $H_m=H_m(\mathbb R^n)$ be the space of (Euclidean) harmonic
polynomials homogeneous of degree $m$.
By \cite[Theorem 6.3.1]{St1}, every $f\in\mathcal H(\mathbb B)$ has a unique
series expansion
\begin{equation}\label{fseries}
f(x)=\sum_{m=0}^\infty S_m(\vert x\vert)f_m(x)
\qquad (x\in\mathbb B),
\end{equation}
where $f_m\in H_m$ and $S_m$ is the hypergeometric function
\begin{equation}\label{Sm}
S_m(r)=\frac{{}_2F_1(m,1-\frac{n}{2};m+\frac{n}{2};r^2)}
{{}_2F_1(m,1-\frac{n}{2};m+\frac{n}{2};1)}
\qquad(0\le r\le 1)
\end{equation}
normalized so that $S_m(1)=1$.
For $f\in\mathcal H(\mathbb B)$ with series expansion \eqref{fseries},
we define
\begin{equation*}
D^t_s f(x)=\sum_{m=0}^\infty d_m(s,t)S_m(\vert x\vert)f_m(x),
\end{equation*}
where the coefficients $d_m(s,t)\in\mathbb R_{>0}$ are defined in
terms of the coefficients of the reproducing kernels as explained
in Section \ref{SRkernel}.
We have $d_m(s,t)\sim m^t$ and $D^t_s$ acts as a differential operator of
order $t$ (integral if $t<0$).
The operators $D^t_s$ are compatible with reproducing kernels and,
compared with other derivative operators, are more convenient
for studying the properties of Bergman and Bergman-Besov spaces.

For $\mathcal H$-harmonic Bergman functions, the following characterization holds.

\begin{theorem}\label{TBergman}
Let $\alpha>-1$, $1\le p<\infty$ and $f\in\mathcal H(\mathbb B)$.
The following are equivalent:
\begin{enumerate}
  \item[(a)] $f\in \mathcal B^p_\alpha$.
  \item[(b)] For some (equivalently, all) $k\ge 1$, we have
  $T^k f\in \mathcal B^p_{\alpha+pk}$ for all $T^k\in\mathcal T^k$.
  \item[(c)] For some (equivalently, all) $k\ge 1$, we have
  $\partial^\kappa f\in L^p_{\alpha+pk}$ for all
  $\vert\kappa\vert=k$.
  \item[(d)] For some (equivalently, all) $k\ge 1$, we have
  $N^k f\in L^p_{\alpha+pk}$.
  \item[(e)] For some (equivalently, all) $t,s\in\mathbb R$ with $\alpha+pt>-1$,
  we have $D^t_s f\in \mathcal B^p_{\alpha+pt}$.
\end{enumerate}
Moreover, all the induced norms are equivalent.
That is,
\begin{align*}
\|f\|_{\mathcal B^p_\alpha}
&\sim \sum_{T^k\in\mathcal T^k} \|T^k f\|_{L^p_{\alpha+pk}}+\vert f(0)\vert
\sim \sum_{\vert\kappa\vert=k} \|\partial^\kappa f\|_{L^p_{\alpha+pk}}
     +\sum_{\vert\kappa\vert\le k-1} \vert\partial^\kappa f(0)\vert\\
&\sim \|N^k f\|_{L^p_{\alpha+pk}}+\vert f(0)\vert
\sim \|D^t_s f\|_{L^p_{\alpha+pt}}.
\end{align*}
\end{theorem}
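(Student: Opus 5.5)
The plan is to prove every implication through the operators $D^t_s$ and the reproducing kernels of Section \ref{SRkernel}, using known facts for the Bergman spaces $\mathcal B^p_\alpha$ with $\alpha>-1$: boundedness of the Bergman-type projections $P_\beta$ on $L^p_\alpha$ when $\alpha+1<p(\beta+1)$, and the reproducing formula $f=P_\beta f$ for $f\in\mathcal B^p_\alpha$. I will also use that $D^t_s$ is a bijection of $\mathcal H(\mathbb B)$ with inverse $D^{-t}_{s+t}$ (or a similar index shift; this is built into the definition of the $d_m(s,t)$). The central identity is that applying $D^t_s$ to the kernel $R_s(x,y)$ in $x$ gives $R_{s+t}(x,y)$. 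So for $f\in\mathcal B^p_\alpha$ and $s$ large,
\begin{equation*}
D^t_s f(x)=\int_{\mathbb B}R_{s+t}(x,y)f(y)\,d\nu_s(y),
\end{equation*}
and conversely $f=\int R_s(\cdot,y)D^t_sf(y)(1-|y|^2)^{-t}\,d\nu_{s+t}(y)$ up to constants.

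First I prove (a)$\Leftrightarrow$(e). I will use the pointwise kernel estimate $|R_\beta(x,y)|\lesssim [x,y]^{-(n+\beta)}$, where $[x,y]^2=1-2\langle x,y\rangle+|x|^2|y|^2$, together with the standard Forelli--Rudin/Schur test integrals. These show that the operator $g\mapsto (1-|x|^2)^{t}\int R_{s+t}(x,y)g(y)\,d\nu_s(y)$ is bounded $L^p_\alpha\to L^p_{\alpha}$ whenever $s$ is large compared with $\alpha,p$. This gives $\|D^t_sf\|_{L^p_{\alpha+pt}}\lesssim\|f\|_{\mathcal B^p_\alpha}$. The reverse inequality follows in the same way from the inverse formula, with the $-t$ weights. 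Independence of $s,t$ comes from composing these maps. For small $s$ the exponent condition may fail; there I will write $D^t_s=D^{t}_{s'}\circ(\text{a multiplier }d_m(s,t)/d_m(s',t)\sim 1)$ and argue that such multipliers with smooth asymptotic expansions are bounded. I expect this to be the main obstacle: a multiplier-lemma type argument using the asymptotic expansion of the ratio of $d_m$'s in powers of $1/m$ to reduce it to finite combinations of $D$ operators.

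Next come (c) and (d). The direction (a)$\Rightarrow$(c) comes from differentiating the reproducing formula $f=P_\beta f$ under the integral, using $|\partial^\kappa_x R_\beta(x,y)|\lesssim[x,y]^{-(n+\beta+k)}$ and the same Schur test. The direction (c)$\Rightarrow$(a) uses the one-variable Hardy-type inequality: integrate along rays from $0$, $f(x)-\text{Taylor part}=\int_0^1\cdots$, and apply the weighted Hardy inequality, which is valid for $\alpha>-1$, $k$ times. This gives $\|f\|_{L^p_\alpha}\lesssim\sum\|\partial^\kappa f\|_{L^p_{\alpha+pk}}+\sum|\partial^\kappa f(0)|$. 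For (d), $N^kf$ is a combination of $x^\kappa\partial^\kappa f$, so (c)$\Rightarrow$(d) is immediate. For (d)$\Rightarrow$(a) I will use $\frac{d}{dr}f(r\zeta)=N f(r\zeta)/r$ and the Hardy inequality near the boundary, together with subharmonic-type mean value estimates near the origin to control $f$ on $|x|<1/2$ by $|f(0)|$ and a compact part.

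Finally, for (b): (c)$\Rightarrow$(b) is immediate since $T_{i,j}$ has bounded coefficients and preserves $\mathcal H$-harmonicity (rotation invariance of $\Delta_h$). For (b)$\Rightarrow$(a) I will use that $T_{i,j}$ acts on $S_m(|x|)f_m$ as $S_m(|x|)T_{i,j}f_m$, and that $\sum_{i<j}T_{i,j}^2$ acts on $H_m$ as $-m(m+n-2)$, the spherical Laplacian. Thus $\sum T_{i,j}^2 f$ is a coefficient multiplier $\sim m^2$. This relates $\mathcal T^{2j}$ control to $D^{2j}_s$ control via the multiplier argument above. Odd $k$ is handled by applying one extra $T$ and using the already-established equivalences for $k+1$. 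The "some $k$ iff all $k$" claims follow since each condition is equivalent to (a).
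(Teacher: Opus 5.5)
Your treatment of (a)$\Leftrightarrow$(e) is workable once you have a multiplier lemma. So is your route to (b)$\Rightarrow$(a) through $\Delta_\sigma^j=(\sum_{i<j}T_{i,j}^2)^j$, which acts on $S_m(\vert x\vert)f_m$ as the multiplier $(-m(m+n-2))^j$. That lemma follows from the asymptotic-expansion kernel bound of Proposition \ref{PEsth}(ii). The paper uses that bound more directly: Theorem \ref{TDstKernel} estimates $D^t_s\mathcal R_\beta$ for every $s$, so nothing is factored through a large parameter. For (b)$\Rightarrow$(a) the paper instead solves an ODE for $Nf$ in terms of $\Delta_\sigma f$. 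Your inverse formula should read $f=\int_{\mathbb B}\mathcal R_s(\cdot,y)D^t_sf(y)\,d\nu_{s+t}(y)$; with the extra factor $(1-\vert y\vert^2)^{-t}$ it reproduces $D^t_sf$.

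The first genuine error is the bound $\vert\partial^\kappa_x\mathcal R_\beta(x,y)\vert\lesssim[x,y]^{-(n+\beta+k)}$ in (a)$\Rightarrow$(c), which holds only for $k\le n-2$. Take $n$ odd. In ${}_2F_1(m,1-\frac n2;m+\frac n2;r^2)$ the parameter $c-a-b=n-1$ is an integer, while $1-\frac n2$ is not a nonpositive integer. So the expansion of $S_m(r)$ at $r=1$ contains $(1-r^2)^{n-1}\log(1-r^2)$ with a nonzero coefficient. Hence, for fixed $y\neq 0$, some derivatives of order $n-1$ of $\mathcal R_\beta(\cdot,y)$ are unbounded near $\mathbb S$, although $[x,y]\sim 1$ there.

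The correct estimates are those of Theorem \ref{TPartialKernel}: a logarithmic or $(1-\vert x\vert^2)^{-\varepsilon}$ loss at $k=n-1$, and for $k\ge n$ an extra factor $(1-\vert x\vert^2)^{-(k-n+1)}$ with the exponent of $[x,y]$ frozen at $n+\beta+n-1$. They come from differentiating the integral representation of $S_m$, not just the zonal harmonics. The argument survives because the weight $(1-\vert x\vert^2)^k$ absorbs this loss. You get $E_{\beta,n-1-\varepsilon}$ or $E_{\beta,n-1}$ instead of $E_{\beta,k}$, and these are still bounded on $L^p_\alpha$ by Lemma \ref{LEbdd} since $\alpha>-1$. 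Your (a)$\Rightarrow$(b) and (a)$\Rightarrow$(d) pass through (c) at arbitrary order, so they need the same correction.

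The second gap is (c)$\Rightarrow$(a). As described, your argument (Taylor expansion along rays from $0$ plus Hardy's inequality, iterated) uses only smoothness of $f$. But the inequality it would give, $\|f\|_{L^p_\alpha}\lesssim\sum_{\vert\kappa\vert=k}\|\partial^\kappa f\|_{L^p_{\alpha+pk}}+\sum_{\vert\kappa\vert\le k-1}\vert\partial^\kappa f(0)\vert$, is false for general smooth $f$ when $kp<n$. Take $f_j(x)=1-\phi(x/\delta_j)$ with $\phi\in C^\infty_c(\mathbb B)$, $\phi=1$ near $0$, and $\delta_j\to 0$. The right side tends to $0$ and the left side to $1$.

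The breakdown is at the origin. Hardy's inequality on rays starting at $0$ returns $\int_{\mathbb B}\vert\partial^\kappa f(x)\vert^p\vert x\vert^{1-n}(1-\vert x\vert^2)^{\alpha+pk}\,d\nu(x)$, and the singular weight $\vert x\vert^{1-n}$ is not controlled by $\|\partial^\kappa f\|_{L^p_{\alpha+pk}}$. So $\mathcal H$-harmonicity must be used near $0$. The paper does this by splitting off $p_{k-1}=\sum_{m<k}S_m(\vert x\vert)f_m$. For $g=f-p_{k-1}$, the averages of $\partial^\lambda g$ over $r_0\mathbb B$ vanish for $\vert\lambda\vert\le k-1$, because $H_m$ is orthogonal on $\mathbb S$ to polynomials of degree $<m$ (Lemma \ref{Laverage0}). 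Hence Hardy's inequality started on spheres of radius $u\in[r_0/2,r_0]$, combined with the Poincar\'e inequality (Lemmas \ref{LAHardy} and \ref{Lflessnabla}), can be iterated with no point values at all. The values $\partial^\kappa f(0)$ enter only through the finite-dimensional space $\mathcal P_{k-1}$, on which $\sum_{\vert\lambda\vert\le k-1}\vert\partial^\lambda p(0)\vert$ is a norm (Lemma \ref{Lp0norm}).

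The same issue at the origin appears in your (d)$\Rightarrow$(a). There it is settled not by subharmonicity, since $N^kf$ is not $\mathcal H$-harmonic, but by the Grellier--Jaming estimate of Lemma \ref{LGJ}. That estimate bounds $\sup_{\vert x\vert\le 1/2}\vert\nabla N^kf\vert$ by $\|N^kf\|_{L^p_{\alpha+pk}}$.
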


This theorem is mostly known.
The equivalence (a)$\Leftrightarrow$(b)$\Leftrightarrow$(d) is shown
in \cite[Theorem 1.2]{J}.
For $k=1$, the equivalence (a)$\Leftrightarrow$(c) is \cite[Theorem 10.3.3]{St1}.
In \cite{RKSL}, a result similar to (a)$\Leftrightarrow$(e) is proved,
where different coefficient multipliers are used.
We will give an alternative proof of Theorem \ref{TBergman} that differs
from these sources.
It will be based on reproducing formulas and estimates of reproducing kernels.
For an analogous characterization of Euclidean harmonic Bergman functions,
see \cite{CKY}.

The main purpose of this paper is to extend the above result to
the range $\alpha\le -1$.
We note that if $f\in\mathcal H(\mathbb B)$, then $T_{i,j}f$ and $D^t_s f$ are also
in $\mathcal H(\mathbb B)$.
However, this is not true for partial and normal derivatives.
Correspondingly, while Theorem \ref{TBergman} extends to all
$\alpha\in\mathbb R$ for tangential derivatives and the operators $D^t_s$,
for partial and normal derivatives it is necessary to restrict $\alpha$.

\begin{theorem}\label{TBesov}
Let $\alpha\in\mathbb R$, $1\le p<\infty$ and $f\in\mathcal H(\mathbb B)$.
The following are equivalent:
\begin{enumerate}
  \item[(a)] For some (equivalently, all) $k\ge 0$ with $\alpha+pk>-1$,
  we have $T^k f\in \mathcal B^p_{\alpha+pk}$ for all $T^k\in\mathcal T^k$.
  \item[(b)] For some (equivalently, all) $t,s\in\mathbb R$ with $\alpha+pt>-1$,
  we have $D^t_s f\in \mathcal B^p_{\alpha+pt}$.
\end{enumerate}
Moreover,
\begin{equation*}
\sum_{T^k\in\mathcal T^k} \|T^k f\|_{L^p_{\alpha+pk}}+\vert f(0)\vert
\sim \|D^t_s f\|_{L^p_{\alpha+pt}}
\end{equation*}
for every $k\ge 0$ with $\alpha+pk>-1$, and for every $t,s\in\mathbb R$
with $\alpha+pt>-1$.
\end{theorem}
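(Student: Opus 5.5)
The plan is to reduce everything to Theorem \ref{TBergman}, using two properties of the operators $D^t_s$ that come from their definition as coefficient multipliers. First, the $D^t_s$ essentially form a group: $D^{t_2}_{s_2}D^{t_1}_{s_1}$ differs from $D^{t_1+t_2}_{s}$ only by a multiplier $c_m$ that, together with all its finite differences in $m$, behaves like a symbol of order $0$. Second, the $D^t_s$ commute with every tangential derivative $T_{i,j}$. This holds because $T_{i,j}$ preserves each $H_m$ and is annihilated by radial functions: we have $T_{i,j}(S_m(\vert x\vert)f_m)=S_m(\vert x\vert)T_{i,j}f_m$.

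\textbf{Independence of $(t,s)$ in (b).} Fix two pairs $(t,s)$ and $(t',s')$ with $\alpha+pt>-1$ and $\alpha+pt'>-1$, and suppose $g=D^t_s f\in\mathcal B^p_{\alpha+pt}$. Apply the implication (a)$\Rightarrow$(e) of Theorem \ref{TBergman} to $g$, with weight $\beta=\alpha+pt>-1$ and with the operator $D^{t'-t}_{s''}$. Here $s''$ is chosen so that $D^{t'-t}_{s''}D^t_s=D^{t'}_{s'}$ exactly, or up to a bounded invertible multiplier. The admissibility condition is $\beta+p(t'-t)=\alpha+pt'>-1$, which holds. This gives $\|D^{t'}_{s'}f\|_{L^p_{\alpha+pt'}}\lesssim\|D^t_sf\|_{L^p_{\alpha+pt}}$. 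If the multipliers do not compose exactly, I first prove a lemma: a multiplier $c_m$ with $c_m\to c\neq0$ and a polynomial-type asymptotic expansion is bounded on every $\mathcal B^p_\beta$, $\beta>-1$. I would prove this lemma via the reproducing-kernel integral representation of the multiplied operator and Schur's test. Since $D^t_s$ is a multiplier, the constant term $f(0)$ is controlled by $D^t_sf(0)=d_0(s,t)f(0)$, which in turn is bounded by the norm.

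\textbf{Tangential to $D$, and the independence of $k$ in (a).} Fix $k$ with $\alpha+pk>-1$, and choose $t$ with $t=k$, so that $\alpha+pt>-1$. By the previous paragraph it suffices to show
\[
\sum_{T^k}\|T^kf\|_{L^p_{\alpha+pk}}+\vert f(0)\vert\sim\|D^k_sf\|_{L^p_{\alpha+pk}} .
\]
Write $g=D^{k}_s f$, so that $f=D^{-k}_{s'}g$ up to a harmless multiplier, and set $\beta=\alpha+pk>-1$.

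For the direction "$\lesssim$", apply Theorem \ref{TBergman}(a)$\Rightarrow$(e) at weight $\beta$ with $t=0$. This gives $\|h\|_{\mathcal B^p_\beta}\sim\|D^0_{s_0}h\|$. Each $T^k$ is order $k$ and tangential, so $T^kf=T^kD^{-k}_{s'}g=D^{-k}_{s'}T^kg$ by commutation. We therefore need $\|D^{-k}T^kg\|_{L^p_\beta}\lesssim\|g\|_{L^p_\beta}$. To get this, write $D^{-k}T^k=T^kD^{-k}$ and apply Theorem \ref{TBergman}(a)$\Leftrightarrow$(b) at weight $\beta-pk=\alpha$. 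That step requires $\alpha>-1$, which fails in general, so instead I would prove a kernel estimate directly: $D^{-k}_{s'}g$ has a reproducing-kernel representation against $\nu_{\beta}$. Differentiating the kernel tangentially $k$ times produces a kernel of size $\lesssim(1-\vert x\vert^2)^{-k}\,\vert\text{kernel of order } n+\beta\vert$. The gain of $(1-\vert x\vert)^k$ from the weight shift is then handled by Schur's test.

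For the direction "$\gtrsim$", express $D^k_s f$ as a combination of $\sum_{T^{k}}(T^{k})^2$ (the spherical Laplacian $\Delta_{\mathbb S}=\sum T_{i,j}^2$ has eigenvalue $-m(m+n-2)$ on $H_m$), composed with a zero-order multiplier. Concretely, $(-\Delta_{\mathbb S})^{k/2}$ behaves like $D^k$ on $m\ge1$. For $k$ even this is an exact identity, $\Delta_{\mathbb S}^{k/2}=\sum\pm T^{k}$, and we need $D^k_sf=M\,\Delta_{\mathbb S}^{k/2}f+c f_0$ with $M$ a zero-order multiplier, so the bound follows from the multiplier lemma at weight $\beta$. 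For odd $k$, I would write $D^k=D^{1}\circ D^{k-1}$ plus a Riesz-transform-type expression $\sum_{i<j}T_{i,j}R_{i,j}$. I expect this odd case, together with the boundedness of these zero-order multipliers on $\mathcal B^p_\beta$ for all $\beta>-1$ (including $p=1$, where Schur-type arguments must use kernel size rather than cancellation), to be the main obstacle. If $p=1$ resists, I would instead raise $k$ to $k+1$: the chain of inequalities closes as long as each step stays at weights above $-1$. The equivalence for different $k$ then follows by passing through $D^t_s$.
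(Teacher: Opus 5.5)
\noindent There is a genuine gap in the direction $\sum_{T^k\in\mathcal T^k}\|T^kf\|_{L^p_{\alpha+pk}}\lesssim\|D^k_sf\|_{L^p_{\alpha+pk}}$, which is exactly where the Besov zone $\alpha\le -1$ matters. You correctly reduce this to $\|D^{-k}_{s+k}T^kg\|_{L^p_\beta}\lesssim\|g\|_{L^p_\beta}$, with $g=D^k_sf$ and $\beta=\alpha+pk>-1$. You then rewrite $D^{-k}_{s+k}T^kg$ as $T^kD^{-k}_{s+k}g$, which is just $T^kf$ again. From there you fall back on a kernel estimate in which $k$ tangential derivatives cost $(1-|x|^2)^{-k}$. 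That cannot close. Both $g$ and $T^kf$ are measured in the same space $L^p_{\alpha+pk}$, so no weight shift exists to absorb the loss. Concretely, a bound $|T^k_xD^{-k}_{s+k}\mathcal R_{\beta'}(x,y)|\lesssim(1-|x|^2)^{-k}[x,y]^{-(n+\beta'-k)}$ gives $|T^kf|\lesssim E_{\beta',-k}(|g|)$. By Lemma \ref{LEbdd}, this operator is bounded on $L^p_{\alpha+pk}$ only if $pk<\alpha+pk+1$, i.e.\ only if $\alpha>-1$, which is precisely the case you are trying to get beyond. A kernel proof would instead need the sharper statement that each tangential derivative costs only a factor $[x,y]^{-1}$. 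You neither formulate nor prove this, and the paper's kernel estimates do not supply it: Theorem \ref{TPartialKernel} bounds full partial derivatives, and for $|\kappa|\ge n-1$ those bounds carry powers of $(1-|x|^2)^{-1}$.

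\noindent The missing idea is to compose in the other order, so that every intermediate weight stays above $-1$ and Theorem \ref{TBergman} applies at each step. From $D^t_sf\in\mathcal B^p_{\alpha+pt}$, Theorem \ref{TBergman}(a)$\Rightarrow$(b) at weight $\alpha+pt$ gives $T^kD^t_sf\in\mathcal B^p_{\alpha+pt+pk}$. By Lemma \ref{LDTcommute} this function equals $D^t_sT^kf$. Theorem \ref{TBergman}(e)$\Rightarrow$(a) at weight $\alpha+pk>-1$ then gives $T^kf\in\mathcal B^p_{\alpha+pk}$. In your notation with $t=k$, this says: first apply $T^k$ to $g$, landing in $\mathcal B^p_{\alpha+2pk}$, and only then apply $D^{-k}_{s+k}$, landing back in $\mathcal B^p_{\alpha+pk}$. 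The same device handles the other direction without spherical-Laplacian powers, Riesz transforms, or a zero-order multiplier lemma. Theorem \ref{TBergman}(a)$\Rightarrow$(e) at weight $\alpha+pk$ gives $D^t_s(T^kf)\in\mathcal B^p_{\alpha+pk+pt}$, and this equals $T^k(D^t_sf)$. Theorem \ref{TBergman}(b)$\Rightarrow$(a) at weight $\alpha+pt$ then yields $D^t_sf\in\mathcal B^p_{\alpha+pt}$, and $|f(0)|=|D^t_sf(0)|$ takes care of the constant term. This is the paper's proof. Because it works for every admissible $k$ and every admissible $(s,t)$ at once, the ``some/all'' independence comes for free, and your separate multiplier lemma for changing $s$ is unnecessary. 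Your $\Delta_{\mathbb S}^{k/2}$ argument for even $k$, with odd $k$ raised to $k+1$, could also be made to work for that direction once the multiplier lemma is proved via Proposition \ref{PEsth}(ii) and Lemma \ref{LEbdd}. However, it is a detour around a step already covered by Theorem \ref{TBergman}.
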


Using this theorem, we extend the spaces $\mathcal B^p_\alpha$ from $\alpha>-1$
to all $\alpha\in\mathbb R$.

\begin{definition}\label{DBesov}
For $\alpha\in\mathbb R$ and $1\le p<\infty$, the Bergman-Besov space
$\mathcal B^p_\alpha$ consists of all $f\in\mathcal H(\mathbb B)$ that satisfy
the equivalent conditions given in Theorem \ref{TBesov}.
\end{definition}

For every $k$ with $\alpha+pk>-1$,
$\sum_{T^k\in\mathcal T^k} \|T^k f\|_{L^p_{\alpha+pk}}+\vert f(0)\vert$
is a norm on $\mathcal B^p_\alpha$.
Similarly, for  every $t,s\in\mathbb R$ with $\alpha+pt>-1$,
$\|D^t_s f\|_{L^p_{\alpha+pt}}$ is a norm.
All these norms are equivalent, and we denote any one of these
by $\|\cdot\|_{\mathcal  B^p_\alpha}$.

If $\alpha>-1$, then one can take $k=0$ (or $t=0$) and obtain the Bergman spaces.
Special cases of the Besov zone $\alpha\le -1$ have been investigated before.
When $\alpha=-1$ and $p=2$, $\mathcal B^2_{-1}$ is the Hardy space
$\mathcal H^2$ which is studied in \cite{St2}.
When $\alpha>-p-1$, first order derivatives suffice and
$\mathcal B^p_\alpha$ coincides with the Dirichlet type spaces
$\mathcal D^p_\gamma$ (with $\gamma=\alpha+n$) considered
in \cite[Chapter 10]{St1}.
The case $\alpha=-n$ is especially important as the measure
$d\nu_{-n}(x)=(1-\vert x\vert^2)^{-n}d\nu(x)$ is M\"obius invariant.
When $p=2$, the space $\mathcal B^2_{-n}$ is extensively studied in
\cite{BEY} and \cite{BE}, where it is shown that $\mathcal B^2_{-n}$ is the unique
M\"obius invariant Hilbert space of $\mathcal H$-harmonic functions.
It is true that the spaces $\mathcal B^p_{-n}$ are M\"obius
invariant for all $1\le p<\infty$, but this will be considered in a
different work.

We next consider characterizations of Bergman-Besov functions in terms
of partial and normal derivatives.
This requires a restriction on $\alpha$.

\begin{theorem}\label{TBesovPartial}
Let $1\le p<\infty$ and $\alpha\in\mathbb R$ satisfy
\begin{equation}\label{Condalpha}
\alpha+p(n-1)>-1.
\end{equation}
For $f\in\mathcal H(\mathbb B)$, the following are equivalent:
\begin{enumerate}
  \item[(a)] $f\in \mathcal B^p_\alpha$.
  \item[(b)] For some (equivalently, all) $k\ge 0$ with $\alpha+pk>-1$,
  we have $\partial^\kappa f\in L^p_{\alpha+pk}$ for all
  $\vert\kappa\vert=k$.
  \item[(c)] For some (equivalently, all) $k\ge 0$ with $\alpha+pk>-1$,
  we have $N^k f\in L^p_{\alpha+pk}$.
\end{enumerate}
In addition,
\begin{equation*}
\|f\|_{\mathcal B^p_\alpha}
\sim \sum_{\vert\kappa\vert=k} \|\partial^\kappa f\|_{L^p_{\alpha+pk}}
     +\sum_{\vert\kappa\vert\le k-1} \vert\partial^\kappa f(0)\vert
\sim \|N^k f\|_{L^p_{\alpha+pk}}+\vert f(0)\vert.
\end{equation*}
\end{theorem}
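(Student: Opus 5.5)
Our plan is to compare partial and normal derivatives of $f$ with the tangential derivatives and the operators $D^t_s$ controlled in Theorem \ref{TBesov}. First we use Theorem \ref{TBergman} to fix the integer $k$. If $\alpha+pk>-1$ and $\alpha+p(k-1)>-1$, then $\partial^\kappa f$ for $\vert\kappa\vert=k-1$ is defined through $f$. However, $\partial^\kappa f$ is no longer $\mathcal H$-harmonic, so Theorem \ref{TBergman} cannot be applied to it directly. The usable fact is the Hardy--Littlewood type inequality: for any $g\in C^1(\mathbb B)$ and $\beta>-1$,
\[
\|g\|_{L^p_\beta}\lesssim \|\nabla g\|_{L^p_{\beta+p}}+\vert g(0)\vert .
\]
This is proved by integrating along radii and applying Hardy's inequality. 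Applied to $g=\partial^\kappa f$, it shows that the condition in (b) for $k$ implies the condition for $k-1$, as long as $\alpha+p(k-1)>-1$. The same argument works for $N^k$, with $N g=r\,\partial_r g$.

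For the reverse direction, from order $k$ to order $k+1$, we would use the Cauchy-type estimate
\[
\vert\nabla g(x)\vert\lesssim (1-\vert x\vert)^{-1}\sup_{B(x,c(1-\vert x\vert))}\vert g\vert .
\]
This needs a local regularity statement for derivatives of $\mathcal H$-harmonic functions. Each $\partial^\kappa f$ solves an elliptic equation with smooth coefficients on Whitney balls, obtained by differentiating $\Delta_h f=0$. Interior estimates on each Whitney ball then give an $L^p$ bound for one more derivative, with weight $(1-\vert x\vert)^{p}$, in terms of the lower-order derivatives. So within the range $\alpha+pk>-1$, the conditions in (b) are equivalent for all admissible $k$, and likewise for (c). Since $\alpha+p(n-1)>-1$, the index $k=n-1$ is always admissible, and so is any larger $k$. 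It therefore suffices to prove (a)$\Leftrightarrow$(b)$\Leftrightarrow$(c) for one large $k$, say $k\ge n-1$ with $\alpha+pk>-1$.

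For such a $k$ we pass through the tangential description of Theorem \ref{TBesov}. The direction (b)$\Rightarrow$(a) is elementary: each $T^j\in\mathcal T^j$ is a combination of $x^\gamma\partial^\kappa$ with $\vert\kappa\vert\le j$. Hence $T^kf\in L^p_{\alpha+pk}$ follows from (b) together with the lower-order bounds already obtained, and the norm estimate follows. For (c)$\Rightarrow$(b) and (a)$\Rightarrow$(c) we use the decomposition
\[
\vert x\vert^2\partial_i=x_iN+\sum_j x_jT_{j,i}.
\]
Iterating it expresses $\partial^\kappa$ near the boundary in terms of $N^a T^b$ with $a+b\le k$, and the region $\vert x\vert<1/2$ is handled by the subharmonicity estimates above.

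The main obstacle is controlling $N^k f$ by tangential data, or equivalently by $D^t_s f$. We would use the series \eqref{fseries}. Here $N(S_m f_m)=(rS_m'(r)+mS_m)f_m$, and the hypergeometric asymptotics give $rS_m'(r)\approx m S_m$ only up to a term of size $(1-r)^{n-1}$. Indeed, $S_m$ behaves like a combination of $1$ and $(1-r^2)^{n-1}$ (through the connection formula for ${}_2F_1$ with $c-a-b=n-1$), so applying $N^k$ produces a singular part of order $(1-r)^{n-1-k}$. This is exactly why \eqref{Condalpha} appears: the term lies in $L^p_{\alpha+pk}$ precisely when $\alpha+p(n-1)>-1$. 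Our plan is to write
\[
N^kf=D^k_s f+E_k f,
\]
and to bound $E_k$ by the kernel estimates of Section \ref{SRkernel}, namely by $(1-\vert x\vert^2)^{n-1-k}$ times an average of $D^{t}_sf$ with $t$ small. The equivalence then follows from Theorem \ref{TBesov}.
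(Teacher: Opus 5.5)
There is a genuine gap exactly where \eqref{Condalpha} has to enter, namely in (a)$\Rightarrow$(c) (and hence (a)$\Rightarrow$(b)) for $k\ge n-1$.

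The operators $N^k$ and $\partial^\kappa$ are not coefficient multipliers on the expansion \eqref{fseries}, because they also differentiate the radial factors $S_m(\vert x\vert)$. So neither $N^k\mathcal R_\beta$ nor your error kernel $(N^k-D^k_s)\mathcal R_\beta$ is a series of the type covered by Proposition~\ref{PEsth}, and Theorem~\ref{TDstKernel} tells you nothing about your $E_k$.

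The heuristics you offer in its place do not hold up:
\begin{itemize}
\item The relation $rS_m'(r)\approx mS_m(r)$ is false. For $m(1-r^2)\gg 1$ the Beta-integral formula for $S_m$ gives $S_m(r)\approx C_m(1-r^2)^{n/2-1}$, hence $rS_m'(r)\approx -(n-2)\frac{r^2}{1-r^2}S_m(r)$.
\item The connection-formula picture is a statement for fixed $m$. You need bounds uniform in $m$ that can be summed against $c_m(\beta)Z_m(x,y)$.
\item For even $n$, $S_m$ is a polynomial in $r^2$, so there is no $(1-r^2)^{n-1}$ branch at all, yet the theorem must still be proved.
\end{itemize}

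The shape you guess for the error is right, but establishing it is the whole difficulty. What is needed is
$\vert N^k_x\mathcal R_\beta(x,y)\vert\lesssim (1-\vert x\vert^2)^{-(k-n+1)}[x,y]^{-(2n-1+\beta)}$ for $k\ge n$, with a logarithmic (or $\varepsilon$-) loss at $k=n-1$, and the same for $\partial^\kappa$. The paper proves this in Theorems~\ref{TPartialKernel} and~\ref{TNormalKernel}: differentiate the Beta-integral representation of $S_m$ under the integral, bound the resulting zonal series with Proposition~\ref{PEstW}, and integrate out $t,\tau$ with Lemma~\ref{LInt01} and $1-\vert x\vert^2t\le 2[tx,y]$. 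With that estimate no splitting $N^k=D^k_s+E_k$ is needed. Insert \eqref{IntRep} with $\beta>-1$ large to get $(1-\vert x\vert^2)^k\vert N^kf(x)\vert\lesssim E_{\beta,n-1}\bigl((1-\vert y\vert^2)^t\vert D^t_\beta f\vert\bigr)(x)$. Lemma~\ref{LEbdd} then demands precisely $-p(n-1)<\alpha+1$.

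Three other steps also need repair.

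\emph{The Hardy-type inequality.} Your inequality $\|g\|_{L^p_\beta}\lesssim\|\nabla g\|_{L^p_{\beta+p}}+\vert g(0)\vert$ is false for general $g\in C^1(\mathbb B)$ when $p<n$. Take $g(x)=\phi(\vert x\vert^2/\delta^2)$ with $\phi$ smooth, $\phi(0)=0$ and $\phi=1$ on $[1,\infty)$. Then $g(0)=0$ and $\|\nabla g\|^p_{L^p_{\beta+p}}\lesssim\delta^{n-p}\to 0$, while $\|g\|_{L^p_\beta}\not\to 0$. Integrating along radii leaves an uncontrolled factor $\vert x\vert^{1-n}$ near the origin. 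The paper gets around this with the Poincar\'e inequality (Lemma~\ref{Lflessnabla}), applied after subtracting the finite-dimensional part in $\mathcal P_{k-1}$ so that the averages vanish (Lemma~\ref{Laverage0}). For $N$ it uses Lemma~\ref{LNormal}, where the singularity at the origin is paid for by $\sup_{\vert x\vert\le 1/2}\vert\nabla N^kf\vert$, controlled through the Grellier--Jaming estimate (Lemma~\ref{LGJ}).

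\emph{The step (c)$\Rightarrow$(b).} You never say how $N^kf\in L^p_{\alpha+pk}$ alone controls the mixed terms $N^aT^bf$. Your $E_k$ bound presupposes $f\in\mathcal B^p_\alpha$, so it cannot be used here. The paper gets tangential control from (c) by using Lemma~\ref{LGJ} to show $N^kT^kf=T^kN^kf\in L^p_{\alpha+2pk}$, and then applying the Bergman-zone result to the $\mathcal H$-harmonic function $T^kf$.

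\emph{The Whitney-ball step.} The individual $\partial^\kappa f$ satisfy no elliptic equation. Your step works if you differentiate the polynomial-coefficient form $(1-\vert x\vert^2)\Delta f+2(n-2)Nf=0$. This gives a closed, weakly coupled system for $(\partial^\kappa f)_{\vert\kappa\vert=k}$ involving only derivatives of orders $k$ and $k+1$.
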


We note that \eqref{Condalpha} is needed only for parts
(a)$\Rightarrow$(b) and (a)$\Rightarrow$(c), but not for the reverse
implications.
That is, for all $\alpha\in\mathbb R$, if there exists a $k$ with
$\alpha+pk>-1$ such that $\partial^\kappa f\in L^p_{\alpha+pk}$ for all
$\vert\kappa\vert=k$ (or $N^k f\in L^p_{\alpha+pk}$),
then $f\in\mathcal B^p_\alpha$.

For the M\"obius invariant space $\mathcal B^2_{-n}$,
Theorem \ref{TBesovPartial} is proved in \cite[Theorem 14]{BEY}.
However, \cite{BEY} imposes the additional condition $k\le n-2$.
The $k$-th derivative of the hypergeometric function
$S_m(r)$ in \eqref{Sm} is bounded on $r\in[0,1)$ for $k\le n-2$, but
becomes unbounded when $k$ exceeds $n-2$.
This leads to a change in the behavior of the $k$-th order partial
derivatives of $\mathcal H$-harmonic functions.
For an example, see the estimates of the derivatives of
the reproducing kernels given in Theorem \ref{TPartialKernel} below.
Theorem \ref{TBesovPartial} shows that for
characterizations of Bergman-Besov functions, no restriction on
the order of the derivative is needed.

We next extend several known properties (projection, duality,
atomic decomposition, inclusion relations) of $\mathcal H$-harmonic
Bergman spaces to the whole range $\alpha\in\mathbb R$.

\begin{theorem}\label{TIso}
Let $1\le p<\infty$.
$\mathcal B^p_{\alpha_1}$ is isomorphic to $\mathcal B^p_{\alpha_2}$
for all $\alpha_1,\alpha_2\in\mathbb R$.
Indeed, for any $s\in\mathbb R$ and $t=(\alpha_2-\alpha_1)/p$,
$D^t_s\colon\mathcal B^p_{\alpha_1}\to\mathcal B^p_{\alpha_2}$ is an
isomorphism.
Thus, all $\mathcal B^p_\alpha$ are Banach spaces.
\end{theorem}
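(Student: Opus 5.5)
The plan is to use the characterization (b) of Theorem \ref{TBesov} together with the algebraic properties of the coefficient multipliers $D^t_s$. Since $D^t_s$ acts on the expansion \eqref{fseries} by $f_m\mapsto d_m(s,t)f_m$, two multipliers with the same lower index compose by multiplying coefficients. The first step is to recall from the construction of $d_m(s,t)$ in Section \ref{SRkernel} the composition rule
\[
D^{t_2}_{s+t_1}D^{t_1}_s=D^{t_1+t_2}_s ,\qquad\text{in particular}\qquad D^{-t}_{s+t}D^t_s=D^0_s=\mathrm{Id},
\]
i.e.\ $d_m(s+t_1,t_2)\,d_m(s,t_1)=d_m(s,t_1+t_2)$ and $d_m(s,0)=1$. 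This should come out of the definition, which presumably takes $d_m(s,t)$ to be a ratio of reproducing-kernel coefficients of $\mathcal B^2_s$ and $\mathcal B^2_{s+t}$. If the rule is only available up to a bounded factor, I would instead show that the inverse of $D^t_s$ is some other multiplier whose coefficients are $\sim m^{-t}$. Such a multiplier is again of this type up to a coefficient sequence $c_m$ with $c_m$ and $1/c_m$ bounded, and the norm equivalences of Theorem \ref{TBesov} would then have to absorb these bounded multipliers. I expect the exact identity to hold, so this fallback should not be needed. Also, $D^t_s$ maps $\mathcal H(\mathbb B)$ into itself, because $d_m(s,t)\sim m^t$ grows only polynomially and so preserves convergence of the series on $\mathbb B$.

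For boundedness, let $f\in\mathcal B^p_{\alpha_1}$ and $g=D^t_sf$ with $t=(\alpha_2-\alpha_1)/p$. Choose $u\in\mathbb R$ with $\alpha_2+pu>-1$. By Theorem \ref{TBesov}(b), applied with parameters $(u,s+t)$, we have
\[
\|g\|_{\mathcal B^p_{\alpha_2}}\sim\|D^u_{s+t}D^t_sf\|_{L^p_{\alpha_2+pu}}=\|D^{u+t}_sf\|_{L^p_{\alpha_1+p(u+t)}},
\]
using $\alpha_2+pu=\alpha_1+p(t+u)$. Since $\alpha_1+p(u+t)>-1$, Theorem \ref{TBesov} applied to $f$ with parameters $(u+t,s)$ shows that the right side is $\sim\|f\|_{\mathcal B^p_{\alpha_1}}$. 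Hence $D^t_s$ maps $\mathcal B^p_{\alpha_1}$ into $\mathcal B^p_{\alpha_2}$ with $\|D^t_sf\|_{\mathcal B^p_{\alpha_2}}\sim\|f\|_{\mathcal B^p_{\alpha_1}}$. The key point is that the ``for all $t,s$'' clause of Theorem \ref{TBesov} lets us choose the lower index $s+t$ freely.

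Running the same argument with $(\alpha_1,\alpha_2,t,s)$ replaced by $(\alpha_2,\alpha_1,-t,s+t)$ shows that $D^{-t}_{s+t}$ is bounded from $\mathcal B^p_{\alpha_2}$ to $\mathcal B^p_{\alpha_1}$. By the composition rule it is a two-sided inverse of $D^t_s$, so $D^t_s$ is an isomorphism; the two-sided norm equivalence already gives injectivity and closed range, and the inverse gives surjectivity.

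For the Banach space claim, note that $\mathcal B^p_{\alpha}$ with $\alpha>-1$ is Banach, which is stated in the introduction (\cite{St1}). Choosing $\alpha_2>-1$, the space $\mathcal B^p_{\alpha_1}$ is isomorphic to a Banach space, hence complete under any of its equivalent norms. The main obstacle is the first step, namely verifying the exact composition and inversion identities for $d_m(s,t)$ from their definition. Everything else follows directly from Theorem \ref{TBesov}.
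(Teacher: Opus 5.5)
Your argument is correct and is essentially the paper's proof. Both lift to the Bergman zone with an auxiliary operator of large order and then use the norm equivalences of Theorem \ref{TBesov}. The only difference is bookkeeping: you use the semigroup rule $D^u_{s+t}\circ D^t_s=D^{u+t}_s$ with a shifted lower index, while the paper keeps the lower index $s$, uses that $D^\tau_s$ and $D^t_s$ commute, and invokes Theorem \ref{TBergman} for $D^t_s\colon\mathcal B^p_{\alpha_1+p\tau}\to\mathcal B^p_{\alpha_2+p\tau}$.

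The identities you single out as the main obstacle are exactly Lemma \ref{LPropDst}(iii)--(iv). They follow at once from $d_m(s,t)=c_m(s+t)/c_m(s)$, so no fallback is needed.
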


Let $\mathcal R_\alpha(x,y)$ be the reproducing kernel of $\mathcal B^2_\alpha$,
$\alpha>-1$.
We will review the properties of these kernels and extend them
to $\alpha\in\mathbb R$ in Section \ref{SRkernel}.
For $\alpha\in\mathbb R$ and $\varphi\in L^1_\alpha$, define the projection
operator
\begin{equation*}
P_\alpha \varphi(x)=\int_{\mathbb B}\mathcal R_\alpha(x,y)\varphi(y)
\,d\nu_\alpha(y).
\end{equation*}
For Bergman spaces, the projection theorem below is proved
in \cite[Theorem 1.1]{U}

\begin{othertheorem}\label{TProjBergman}
Let $\alpha,\beta>-1$ and $1\le p<\infty$.
Then $P_\beta\colon L^p_\alpha\to\mathcal B^p_\alpha$ is bounded if and
only if $\alpha+1<p(\beta+1)$.
Under this condition $P_\beta f=f$ for $f\in\mathcal B^p_\alpha$.
\end{othertheorem}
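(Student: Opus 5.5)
This is the known projection theorem of \cite[Theorem 1.1]{U} for Bergman spaces. The plan is a Schur test for sufficiency and an explicit counterexample family for necessity, both resting on the standard size estimate of the kernel $\mathcal R_\beta(x,y)$ reviewed in Section \ref{SRkernel}. That estimate is
\begin{equation*}
\vert\mathcal R_\beta(x,y)\vert\lesssim \frac{1}{[x,y]^{n+\beta}},
\qquad [x,y]^2=1-2\langle x,y\rangle+\vert x\vert^2\vert y\vert^2 ,
\end{equation*}
valid for $\beta>-1$ with $n+\beta>0$. In the remaining range it holds with a logarithm or a constant in place of the power; this only helps. It is accompanied by the two-sided integral estimate
\begin{equation*}
\int_{\mathbb B}\frac{(1-\vert y\vert^2)^{c}}{[x,y]^{n+c+d}}\,d\nu(y)\sim (1-\vert x\vert^2)^{-d}
\qquad (c>-1,\ d>0).
\end{equation*}

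\textbf{Sufficiency.} Assume $\alpha+1<p(\beta+1)$ and write
\begin{equation*}
P_\beta\varphi(x)=\int_{\mathbb B}\mathcal R_\beta(x,y)\,(1-\vert y\vert^2)^{\beta-\alpha}\varphi(y)\,d\nu_\alpha(y),
\end{equation*}
so that boundedness on $L^p_\alpha$ reduces to boundedness of the positive operator with kernel $K(x,y)=(1-\vert y\vert^2)^{\beta-\alpha}[x,y]^{-(n+\beta)}$ with respect to $\nu_\alpha$.

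For $p>1$ I apply the Schur test with test function $h(x)=(1-\vert x\vert^2)^{-\gamma}$. The integral estimate turns both Schur inequalities into conditions on $\gamma$:
\begin{itemize}
\item[(i)] $\gamma p'<\beta+1$, from integrating in $y$;
\item[(ii)] $\alpha-\gamma p>-1$ and $\gamma p<\beta+1$, from integrating in $x$.
\end{itemize}
Such a $\gamma>0$ exists exactly when $\alpha+1<p(\beta+1)$. For $p=1$ I instead use Fubini. The hypothesis becomes $\alpha<\beta$, and then $\int_{\mathbb B}(1-\vert x\vert^2)^{\alpha}[x,y]^{-(n+\beta)}\,d\nu(x)\lesssim(1-\vert y\vert^2)^{\alpha-\beta}$, which is exactly what is needed.

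\textbf{Reproducing property.} The identity $P_\beta f=f$ holds for $f\in\mathcal B^2_\beta$, and hence for all $\mathcal H$-harmonic functions continuous on $\overline{\mathbb B}$. Such functions are dense in $\mathcal B^p_\alpha$ by dilation: $f_r(x)=f(rx)$ is $\mathcal H$-harmonic, since $\Delta_h$ does not commute with dilations, so density must use the series \eqref{fseries} and its partial sums instead. Once density is in hand, the boundedness just proved lets $P_\beta f=f$ pass to the limit. In fact, the condition $\alpha+1<p(\beta+1)$ alone gives $\mathcal B^p_\alpha\subset L^1_\beta$ via the pointwise growth bound $\vert f(x)\vert\lesssim(1-\vert x\vert^2)^{-(n+\alpha)/p}$, although I would still argue through density.

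\textbf{Necessity.} This is the main obstacle. Suppose $\alpha+1\ge p(\beta+1)$. If $P_\beta$ were bounded, then by duality its adjoint $P_\beta^*$ would be bounded on $L^{p'}_\alpha$, and by the symmetry of $\mathcal R_\beta$ the adjoint has kernel $\mathcal R_\beta(x,y)(1-\vert x\vert^2)^{\beta-\alpha}$. The case $p=1$ uses $L^\infty$ here.

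I test this on functions concentrated near a boundary point, namely $\varphi=\chi_{E(a,r)}$ for a hyperbolic ball $E(a,r)$ with $\vert a\vert\to1$. This requires a lower bound $\vert\mathcal R_\beta(x,y)\vert\gtrsim(1-\vert a\vert^2)^{-(n+\beta)}$ for $x,y$ in a small hyperbolic ball about $a$, which follows from the kernel's behavior near the diagonal. Alternatively, for $p>1$ I would use the test function $(1-\vert y\vert^2)^{\epsilon}$, whose image under $P_\beta$ is the constant $1$ times a factor, and compare norms; I expect the growth exponent to show a blow-up exactly when $\alpha+1\ge p(\beta+1)$. The delicate part is obtaining a genuine lower kernel bound, since $\mathcal R_\beta$ is not positive. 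The approach I would try first is to use $\mathcal R_\beta(x,a)$ with $x$ near $a$ and to estimate the norm of $P_\beta$ against kernel functions via $P_\beta\mathcal R_\beta(\cdot,a)$.
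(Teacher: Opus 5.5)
\textbf{Necessity is the genuine gap.} The localized test you propose first cannot detect unboundedness. With only a near-diagonal lower bound you get $\vert P_\beta^*\chi_{E(a,r)}\vert\sim 1$ on $E(a,r)$, hence $\|P_\beta^*\chi_{E(a,r)}\|_{L^{p'}_\alpha}\gtrsim\|\chi_{E(a,r)}\|_{L^{p'}_\alpha}$, which is perfectly compatible with boundedness. Testing on kernel functions gives nothing either, since $P_\beta\mathcal R_\beta(\cdot,a)=\mathcal R_\beta(\cdot,a)$. When $\alpha+1\ge p(\beta+1)$, the failure is a non-integrability at the boundary, and it is seen through the kernel far from the diagonal.

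Your radial alternative does capture it for $p>1$, once the exponent is shifted. Because $\int_{\mathbb S}Z_m(x,r\zeta)\,d\sigma(\zeta)=0$ for $m\ge1$ and \eqref{seriesRalpha} converges uniformly in $y$, one has $P_\beta\varphi\equiv\int_{\mathbb B}\varphi\,d\nu_\beta$ for radial $\varphi\in L^1_\beta$. The function $\varphi_\epsilon=(1-\vert y\vert^2)^{\epsilon-(\alpha+1)/p}$ satisfies $\|\varphi_\epsilon\|_{L^p_\alpha}\sim\epsilon^{-1/p}$. If $\alpha+1=p(\beta+1)$, then $P_\beta\varphi_\epsilon$ is a constant of size $\epsilon^{-1}$; if $\alpha+1>p(\beta+1)$, then $\int_{\mathbb B}\varphi_\epsilon\,d\nu_\beta=\infty$ (use truncations). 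This is a variant of what the paper does in the proof of Theorem \ref{TProjBesov}, with a log-corrected power and Lemma \ref{LRbig}.

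For $p=1$, however, radial tests only give $\alpha\le\beta$. The endpoint $\alpha=\beta$, i.e.\ unboundedness of $P_\alpha$ on $L^1_\alpha$, is left open. The paper gives no shortcut here, since its proof of Theorem \ref{TProjBesov} reduces exactly this case to the present theorem. Boundedness would force $\sup_a\|\mathcal R_\alpha(\cdot,a)\|_{L^1_\alpha}<\infty$ (use approximate identities at $a$ and Fatou). Near-diagonal bounds contribute only $O(1)$ to this norm, so you need a lower bound on a much larger set.

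One way to get it is along a radius, where every term of \eqref{seriesRalpha} is positive: $c_m(\alpha)>0$, $Z_m(\zeta,\zeta)=\dim H_m$, and $S_m\ge 1$ by the integral formula in the proof of Theorem \ref{TPartialKernel}. Hence $\mathcal R_\alpha(r\zeta,s\zeta)\gtrsim(1-rs)^{-(n+\alpha)}$. Write $a=\vert a\vert\zeta$ and apply the invariant sub-mean-value inequality on small disjoint hyperbolic balls centered at $(1-2^{-k})\zeta$, for $1-\vert a\vert\le 2^{-k}\le\frac12$. Each ball contributes $\gtrsim 1$, so $\|\mathcal R_\alpha(\cdot,a)\|_{L^1_\alpha}\gtrsim\log\frac{1}{1-\vert a\vert}$.

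\textbf{Reproducing property, and a slip in the Schur test.} The density route does not work as stated. Dilates of $\mathcal H$-harmonic functions are not $\mathcal H$-harmonic, and norm convergence of the partial sums of \eqref{fseries} in $\mathcal B^p_\alpha$ is a nontrivial multiplier statement that you do not prove; for $p=1$ it already fails for holomorphic Bergman functions on the disc. Your fallback is also misjustified: the growth bound $\vert f(x)\vert\lesssim(1-\vert x\vert^2)^{-(n+\alpha)/p}$ gives $f\in L^1_\beta$ only when $n+\alpha<p(\beta+1)$.

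Instead, get $\mathcal B^p_\alpha\subset L^1_\beta$ from H\"older as in \eqref{Holderin}. Then prove $P_\beta f=f$ directly for every $f\in\mathcal H(\mathbb B)\cap L^1_\beta$. For fixed $x$ the series \eqref{seriesRalpha} converges uniformly in $y\in\mathbb B$, so you may integrate term by term against $f\,d\nu_\beta$. The identity $\int_{\mathbb S}Z_m(x,\zeta)f(r\zeta)\,d\sigma(\zeta)=S_m(r)r^mf_m(x)$, together with the definition of $c_m(\beta)$, turns the $m$-th term into $S_m(\vert x\vert)f_m(x)$. No density is needed.

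Sufficiency is otherwise sound (it is Lemma \ref{LEbdd} with $s=\beta$, $t=0$), but the $x$-integration in your Schur test is misstated. It requires $\alpha-\beta<\gamma p<\alpha+1$, because the exponent $\beta-\alpha+\gamma p$ must be positive; it does not require $\gamma p<\beta+1$. With your list every small $\gamma>0$ works, so the hypothesis never enters. With the correct lower bound, compatibility with $\gamma p'<\beta+1$ is exactly $\alpha+1<p(\beta+1)$.
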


The following theorem generalizes the above theorem to
all $\alpha\in\mathbb R$.

\begin{theorem}\label{TProjBesov}
Let $\alpha, \beta\in\mathbb R$ and $1\le p<\infty$.
Then $P_\beta\colon L^p_\alpha\to\mathcal B^p_\alpha$ is bounded if and only if
\begin{equation}\label{projineq}
\alpha+1<p(\beta+1).
\end{equation}
Under this condition, $P_\beta$ is right invertible.
More specifically, pick any $t\in\mathbb R$ with $\alpha+pt>-1$.
For $f\in\mathcal B^p_\alpha$, let
$\varphi(x)=\frac{V_\beta}{V_{\beta+t}}(1-\vert x\vert^2)^tD^t_\beta f(x)$.
Then $\|\varphi\|_{L^p_\alpha}\sim\|f\|_{\mathcal B^p_\alpha}$ and
$P_\beta\varphi=f$.
Thus, the following integral representation holds
\begin{equation}\label{IntRep}
f(x)=\int_{\mathbb B}
\mathcal R_\beta(x,y) D^t_\beta f(y)\,d\nu_{\beta+t}(y)
\qquad (f\in\mathcal B^p_\alpha,\, x\in\mathbb B).
\end{equation}
\end{theorem}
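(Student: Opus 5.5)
The plan is to reduce Theorem \ref{TProjBesov} to Theorem \ref{TProjBergman} and to the norm equivalence of Theorem \ref{TBesov}, using two facts about the kernels from Section \ref{SRkernel}. The first is the intertwining identity
\[
D^t_\beta\mathcal R_\beta(\cdot,y)=\frac{V_{\beta+t}}{V_\beta}\,\mathcal R_{\beta+t}(\cdot,y),
\]
which holds because $d_m(\beta,t)$ is the ratio of the kernel coefficients of $\mathcal R_{\beta+t}$ and $\mathcal R_\beta$, with the normalizations absorbed into the definition. The second is the two-sided pointwise and integral estimates for $\mathcal R_\gamma(x,y)$, valid for all real $\gamma$.

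\textbf{Right inverse.} Fix $t$ with $\alpha+pt>-1$. By Theorem \ref{TBesov}, $\|\varphi\|_{L^p_\alpha}\sim\|D^t_\beta f\|_{L^p_{\alpha+pt}}\sim\|f\|_{\mathcal B^p_\alpha}$; the constants $V_\alpha$ and $V_{\alpha+pt}$ only change the norms by fixed factors.

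To get $P_\beta\varphi=f$, I first apply $D^t_\beta$ to $P_\beta\varphi$ and differentiate under the integral. This is justified by locally uniform convergence of the coefficient series. The intertwining identity gives
\[
D^t_\beta P_\beta\varphi=P_{\beta+t}(D^t_\beta f).
\]
Now $g=D^t_\beta f\in\mathcal B^p_{\alpha+pt}$, and $\alpha+pt>-1$. The condition $\alpha+1<p(\beta+1)$ is equivalent to $\alpha+pt+1<p(\beta+t+1)$. I would like to apply Theorem \ref{TProjBergman} with weights $\alpha+pt$ and $\beta+t$, giving $P_{\beta+t}g=g$, but this requires $\beta+t>-1$. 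If that fails, I enlarge $t$: every $t'\ge t$ still has $\alpha+pt'>-1$, and the representation does not depend on which admissible $t$ is used. Alternatively, I would check $P_{\beta+t}g=g$ directly for $\mathcal H$-harmonic $g$ by expanding in the series \eqref{fseries} and integrating termwise in polar coordinates. Either way $D^t_\beta P_\beta\varphi=D^t_\beta f$, and since $D^t_\beta$ is injective on $\mathcal H(\mathbb B)$ (all $d_m>0$), $P_\beta\varphi=f$. This also yields \eqref{IntRep}.

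\textbf{Boundedness under \eqref{projineq}.} Take $\varphi\in L^p_\alpha$, choose $t$ large enough that $\alpha+pt>-1$ and $\beta+t>-1$, and estimate
\[
\|P_\beta\varphi\|_{\mathcal B^p_\alpha}\sim\|D^t_\beta P_\beta\varphi\|_{L^p_{\alpha+pt}}.
\]
By the intertwining identity, $D^t_\beta P_\beta\varphi$ is, up to a constant, the operator
\[
\psi\mapsto\int_{\mathbb B}\mathcal R_{\beta+t}(x,y)\,\psi(y)\,d\nu_{\beta+t}(y),\qquad \psi=(1-\vert y\vert^2)^{-t}\varphi,
\]
and $\psi\in L^p_{\alpha+pt}$ exactly when $\varphi\in L^p_\alpha$. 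Theorem \ref{TProjBergman}, with weights $\alpha+pt$ and $\beta+t$, then gives boundedness under the same inequality. A further point to check is that $P_\beta\varphi$ is well defined, i.e.\ $L^p_\alpha\subseteq L^1_\beta$ locally against the kernel; this follows from the kernel size estimate by Hölder's inequality under \eqref{projineq}.

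\textbf{Necessity.} If $P_\beta$ is bounded, then the operator $\psi\mapsto P_{\beta+t}\psi$ above is bounded from $L^p_{\alpha+pt}$ to $L^p_{\alpha+pt}$. The "only if" part of Theorem \ref{TProjBergman} then forces $\alpha+pt+1<p(\beta+t+1)$, which is \eqref{projineq}. I expect this step to be the main obstacle. Boundedness into $\mathcal B^p_\alpha$ controls only $D^t_\beta P_\beta\varphi$, i.e.\ the image of $P_{\beta+t}$. So I must make sure the equivalence of "$P_{\beta+t}$ bounded on $L^p$" with \eqref{projineq} genuinely transfers, including when $\beta\le-1$ and $P_\beta\varphi$ may fail to be defined for all $\varphi$. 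If it does not transfer cleanly, I would instead test directly on $\varphi(y)=(1-\vert y\vert^2)^{c}\chi_{\{\vert y\vert>1/2\}}$ times a spherical harmonic, with $c$ near the critical integrability exponent. Using the kernel coefficients, $P_\beta\varphi$ has an explicit coefficient that blows up exactly when $\alpha+1\ge p(\beta+1)$.
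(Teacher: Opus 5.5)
The necessity direction is not closed. Your sufficiency and right-inverse arguments are sound: conjugating by $D^t_\beta$ and invoking Theorem~\ref{TProjBergman} for the weights $\alpha+pt,\beta+t$ is a valid alternative to the paper's use of Theorem~\ref{TDstKernel} and Lemma~\ref{LEbdd}. Necessity, however, is left open at exactly the point you flag.

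The identity $D^t_\beta P_\beta\varphi=\frac{V_{\beta+t}}{V_\beta}P_{\beta+t}\bigl((1-\vert y\vert^2)^{-t}\varphi\bigr)$ requires pushing $D^t_\beta$ under the integral, and hence $\varphi\in L^1_\beta$. For sufficiency you get this from H\"older's inequality via \eqref{projineq}, but that is unavailable here.

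The missing observation is that $\mathcal R_\beta(0,y)\equiv1$: the $m=0$ coefficient is $1$ and $Z_m(0,y)=0$ for $m\ge1$. So the hypothesis that $P_\beta$ is defined on $L^p_\alpha$ already gives $\int\vert\varphi\vert\,d\nu_\beta=P_\beta\vert\varphi\vert(0)<\infty$, i.e.\ $L^p_\alpha\subset L^1_\beta$. With that, the identity $D^t_\beta\circ P_\beta\circ M_t=C\,P_{\beta+t}$ on $L^p_{\alpha+pt}$ is legitimate, where $M_t\psi=(1-\vert x\vert^2)^t\psi$ and $t$ is large. The ``only if'' part of Theorem~\ref{TProjBergman} then yields \eqref{projineq} for every $p$.

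The paper proceeds essentially this way, in two steps. For $p>1$, positivity of $\mathcal R_\beta$ near $0$ (Lemma~\ref{LRbig}) and the test function $(1-\vert x\vert^2)^{-(\alpha+1)/p}\bigl(1+\log\frac{1}{1-\vert x\vert^2}\bigr)^{-1}$ give \eqref{projineq} directly. For $p=1$ the same idea gives only $\alpha\le\beta$, and $\alpha=\beta$ is then excluded by exactly your composition argument.

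Your fallback would fail at the endpoint. For $\varphi=(1-\vert y\vert^2)^c\chi_{\{\vert y\vert>1/2\}}Y$, membership in $L^p_\alpha$ forces $c>-(\alpha+1)/p$. When $\alpha+1=p(\beta+1)$ this means $c+\beta>-1$, so no coefficient of $P_\beta\varphi$ diverges. For $p>1$ a logarithmic correction is needed. For $p=1$, $\alpha=\beta$, nothing can diverge at all because $L^1_\alpha=L^1_\beta$; only the norm estimate fails there, which is why the reduction to Theorem~\ref{TProjBergman} is indispensable.

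Two smaller points:
\begin{itemize}
\item In the paper's normalization, $D^t_\beta\mathcal R_\beta=\mathcal R_{\beta+t}$ exactly (Lemma~\ref{LPropDst}(ii)). The factor $V_\beta/V_{\beta+t}$ in $\varphi$ comes from $d\nu_{\beta+t}=\frac{V_\beta}{V_{\beta+t}}(1-\vert y\vert^2)^t\,d\nu_\beta$. With your displayed constant you would get $P_\beta\varphi=\frac{V_{\beta+t}}{V_\beta}f$, although the identity you actually use, $D^t_\beta P_\beta\varphi=P_{\beta+t}(D^t_\beta f)$, is the correct one.
\item The condition $\beta+t>-1$ is automatic, since $p(\beta+t+1)>\alpha+1+pt>0$. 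So your ``enlarge $t$'' workaround is unnecessary, and it would have been circular, since \eqref{IntRep} is claimed for every admissible $t$.
\end{itemize}
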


\begin{corollary}\label{CPointeva}
Point evaluations are bounded on $\mathcal B^p_\alpha$ for all
$\alpha\in\mathbb R$ and $1\le p<\infty$.
\end{corollary}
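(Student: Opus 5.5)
We plan to derive Corollary \ref{CPointeva} directly from the integral representation \eqref{IntRep} in Theorem \ref{TProjBesov}. Fix $\alpha\in\mathbb R$, $1\le p<\infty$ and $x\in\mathbb B$. First we choose $\beta$ large enough that \eqref{projineq} holds, i.e.\ $\alpha+1<p(\beta+1)$. Next we choose $t$ with $\alpha+pt>-1$. Then for every $f\in\mathcal B^p_\alpha$,
\begin{equation*}
f(x)=\int_{\mathbb B}\mathcal R_\beta(x,y)D^t_\beta f(y)\,d\nu_{\beta+t}(y).
\end{equation*}
Writing $d\nu_{\beta+t}=\frac{V_\alpha}{V_{\beta+t}}(1-\vert y\vert^2)^{\beta+t-\alpha}\,d\nu_\alpha(y)$ and applying H\"older's inequality with respect to $\nu_{\alpha}$ gives
\begin{equation*}
\vert f(x)\vert\le C\,\|D^t_\beta f\|_{L^p_{\alpha+pt}}
\Bigl(\int_{\mathbb B}\vert\mathcal R_\beta(x,y)\vert^{p'}(1-\vert y\vert^2)^{(\beta-\alpha/p-\alpha/p'+\ldots)p'}\,d\nu(y)\Bigr)^{1/p'}.
\end{equation*}
Here we write $(1-\vert y\vert^2)^{\beta+t}=(1-\vert y\vert^2)^{(\alpha+pt)/p}\cdot(1-\vert y\vert^2)^{\beta-\alpha/p}$. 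The factor $\|D^t_\beta f\|_{L^p_{\alpha+pt}}$ is comparable to $\|f\|_{\mathcal B^p_\alpha}$ by Theorem \ref{TBesov}. For $p=1$ the H\"older factor is replaced by a supremum.

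The remaining task is to show that $y\mapsto\mathcal R_\beta(x,y)(1-\vert y\vert^2)^{\beta-\alpha/p}$ lies in $L^{p'}(\nu)$ (or $L^\infty$ when $p=1$) for fixed $x$. This is where we use the kernel estimates of Section \ref{SRkernel}. For fixed $x\in\mathbb B$, the kernel $\mathcal R_\beta(x,\cdot)$ is bounded on $\mathbb B$, by the standard estimate $\vert\mathcal R_\beta(x,y)\vert\lesssim[x,y]^{-(n+\beta)}$ when $n+\beta>0$, or a logarithmic or bounded analogue otherwise. Since $[x,y]\ge 1-\vert x\vert>0$, the bound is uniform in $y$. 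The exponent then satisfies $(\beta-\alpha/p)p'>-1$, which is equivalent to $p(\beta+1)>\alpha+1$ after rearranging: for $p>1$, $(\beta-\alpha/p)p'>-1$ iff $\beta-\alpha/p>-1+1/p$ iff $p\beta+p>\alpha+1$. So the integral is finite. For $p=1$ we need $\beta-\alpha\ge0$, which we can ensure simply by taking $\beta\ge\alpha$. This gives $\vert f(x)\vert\le C_x\|f\|_{\mathcal B^p_\alpha}$.

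The only delicate point is the boundedness of $\mathcal R_\beta(x,\cdot)$ for fixed $x$ when $\beta\le -1$, since the kernels are extended there. We avoid this issue altogether by choosing $\beta>-1$ large, which is permitted because \eqref{projineq} only requires $\beta$ large. Alternatively, the series expansion of $\mathcal R_\beta$ with coefficients of polynomial growth in $m$, combined with $\vert x\vert^m$ decay and the boundedness of $S_m$, gives uniform boundedness in $y$ directly.
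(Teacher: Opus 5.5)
Your proof is correct and follows the paper's route. The paper also starts from the integral representation \eqref{IntRep} and bounds $\mathcal R_\beta(x_0,\cdot)$ by a constant via Theorem \ref{TDstKernel} (with $t=0$), using $[x_0,y]\ge 1-\vert x_0\vert$. It then uses the H\"older estimate \eqref{DtinL1} to control $\|D^t_\beta f\|_{L^1_{\beta+t}}$ by $\|f\|_{\mathcal B^p_\alpha}$. The only difference is cosmetic: you apply H\"older with the kernel inside the integral, while the paper pulls the bounded kernel out first. Also, Theorem \ref{TDstKernel} already holds for every $\beta\in\mathbb R$, so your precaution of taking $\beta>-1$ is unnecessary, though harmless.
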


Next, we next consider dual spaces and generalize \cite[Corollary 1.4]{U} and
\cite[Theorem 1.2]{U3}.

\begin{theorem}\label{TDuality}
Let $\alpha\in\mathbb R$.
For $1<p<\infty$, the dual of $\mathcal B^p_\alpha$ is isomorphic
to $\mathcal B^{p'}_\alpha$, where $p'=p/(p-1)$ is the conjugate
exponent of $p$.
More precisely, pick any $s,t\in\mathbb R$ with $\alpha+pt>-1$.
Set $t'=(p-1)t$ so that $\alpha+p't'=\alpha+pt$.
Define the pairing $\langle\cdot,\cdot\rangle_{\alpha,p,s,t}
=\langle\cdot,\cdot\rangle_{\alpha,p}$ by
\begin{equation*}
\langle f,g\rangle_{\alpha,p}
=\int_{\mathbb B} D^t_s f(x) D^{t'}_sg(x)\,d\nu_{\alpha+pt}(x).
\end{equation*}
Then, to each $\Lambda\in(\mathcal B^p_\alpha)^*$, there corresponds
a unique $g\in\mathcal B^{p'}_\alpha$ with
$\|g\|_{\mathcal B^{p'}_\alpha}\sim \|\Lambda\|$ such that
$\Lambda (f)=\langle f,g\rangle_{\alpha,p}$.

The dual of $\mathcal B^1_\alpha$ is isomorphic to the
$\mathcal H$-harmonic Bloch space
\begin{equation*}
\mathcal B=\{f\in\mathcal H(\mathbb B)\colon
\sup_{x\in\mathbb B} (1-\lvert x\rvert^2)\lvert\nabla f(x)\rvert<\infty\}.
\end{equation*}
More precisely, pick any $s,t\in\mathbb R$ with $\alpha+t>-1$ and define
the pairing $\langle\cdot,\cdot\rangle_{\alpha,s,t}
=\langle\cdot,\cdot\rangle_{\alpha}$ by
\begin{equation}\label{pairing}
\langle f,g\rangle_\alpha
=\lim_{r\to1^-}\int_{r\mathbb B} D^t_s f(x) g(x)\,d\nu_{\alpha+t}(x).
\end{equation}
Then, to each $\Lambda\in(\mathcal B^1_\alpha)^*$, there corresponds
a unique $g\in\mathcal B$ with
$\|g\|_{\mathcal B}\sim \|\Lambda\|$ such that
$\Lambda (f)=\langle f,g\rangle_{\alpha}$.

Similarly, the predual of $\mathcal B^1_\alpha$ can be identified
with the $\mathcal H$-harmonic little Bloch space
$\mathcal B_0=\{f\in\mathcal H(\mathbb B)\colon
\lim_{\lvert x\rvert\to 1^-}
(1-\lvert x\rvert^2)\lvert\nabla f(x)\rvert=0\}$
under the pairing \eqref{pairing}.
\end{theorem}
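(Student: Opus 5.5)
The plan is to reduce the whole statement to the Bergman case $\alpha+pt>-1$ by means of the isomorphism $D^t_s$ of Theorem \ref{TIso}, and then to use the known duality for Bergman spaces, i.e.\ the range where \cite[Corollary 1.4]{U} and \cite[Theorem 1.2]{U3} apply. We will also use the projection Theorem \ref{TProjBesov}.

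\emph{Case $1<p<\infty$.} Fix $s,t$ with $\alpha+pt>-1$ and put $\gamma=\alpha+pt=\alpha+p't'$. By Theorem \ref{TIso}, $D^t_s\colon\mathcal B^p_\alpha\to\mathcal B^p_\gamma$ and $D^{t'}_s\colon\mathcal B^{p'}_\alpha\to\mathcal B^{p'}_\gamma$ are isomorphisms. Since $\gamma>-1$, these are genuine Bergman spaces.

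\emph{Boundedness of the pairing.} Hölder's inequality in $L^p_\gamma$ gives
\[
\vert\langle f,g\rangle_{\alpha,p}\vert\le \|D^t_sf\|_{L^p_\gamma}\|D^{t'}_sg\|_{L^{p'}_\gamma}\sim\|f\|_{\mathcal B^p_\alpha}\|g\|_{\mathcal B^{p'}_\alpha}.
\]

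\emph{Existence of $g$.} Given $\Lambda\in(\mathcal B^p_\alpha)^*$, the functional $\Lambda\circ(D^t_s)^{-1}$ lies in $(\mathcal B^p_\gamma)^*$. By Hahn--Banach it extends to $L^p_\gamma$, so it is represented by some $h\in L^{p'}_\gamma$ with $\|h\|\le\|\Lambda\|$. For $u\in\mathcal B^p_\gamma$ we have $u=P_\gamma u$, by Theorem \ref{TProjBergman} with $\beta=\gamma$, since $\gamma+1<p(\gamma+1)$. By Fubini and the symmetry of $\mathcal R_\gamma$,
\[
\int_{\mathbb B} u\,h\,d\nu_\gamma=\int_{\mathbb B} u\,P_\gamma h\,d\nu_\gamma .
\]
Fubini is justified by the standard kernel estimate $\int\vert\mathcal R_\gamma(x,y)\vert\,d\nu_\gamma(y)$, or more simply by first testing on $u$ in a dense class. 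The function $P_\gamma h$ lies in $\mathcal B^{p'}_\gamma$ with norm $\lesssim\|h\|$, again by Theorem \ref{TProjBergman} applied with exponent $p'$. Set $g=(D^{t'}_s)^{-1}P_\gamma h\in\mathcal B^{p'}_\alpha$. Then $\Lambda(f)=\langle f,g\rangle_{\alpha,p}$ and $\|g\|\lesssim\|\Lambda\|$; the reverse inequality is the Hölder bound above.

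\emph{Uniqueness of $g$.} If $\langle f,g\rangle_{\alpha,p}=0$ for all $f$, test with $f=(D^t_s)^{-1}\mathcal R_\gamma(x,\cdot)$. The kernel lies in every $\mathcal B^p_\gamma$ because it is bounded for fixed $x$. The reproducing property then gives $D^{t'}_sg(x)=0$ for every $x$, hence $g=0$ because $D^{t'}_s$ is injective.

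\emph{Case $p=1$.} Put $\gamma=\alpha+t>-1$, so $D^t_s\colon\mathcal B^1_\alpha\to\mathcal B^1_\gamma$ is an isomorphism. By the known result for Bergman spaces, $(\mathcal B^1_\gamma)^*\cong\mathcal B$ under the pairing $\lim_{r\to1}\int_{r\mathbb B}u\,g\,d\nu_\gamma$ \cite{U3}, and $\mathcal B_0^*\cong\mathcal B^1_\gamma$. Composing these identifications with $D^t_s$ gives precisely the pairing \eqref{pairing}, together with the norm equivalences and uniqueness.

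\emph{Main obstacle.} I expect the hardest point to be the cited $p=1$ Bergman result. It must hold for every $\gamma>-1$ with the limit pairing, and the limit must exist for all $u\in\mathcal B^1_\gamma$ and $g\in\mathcal B$. If \cite{U3} covers only some $\gamma$, I would reprove it. Boundedness would come from writing $g=P_\beta$ of a bounded function with $\beta$ large, using the integral representation \eqref{IntRep}. Surjectivity would come from Hahn--Banach into $L^\infty$ followed by $P_\gamma\colon L^\infty\to\mathcal B$, using the kernel derivative estimates of Section \ref{SRkernel}. The predual statement would be proved in the same way, with $C_0$ in place of $L^\infty$.
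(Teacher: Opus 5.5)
Your proposal is correct and follows the paper's proof. Both reduce to the Bergman space $\mathcal B^p_{\alpha+pt}$ (resp.\ $\mathcal B^1_{\alpha+t}$) through the isomorphism $D^t_s$ of Theorem \ref{TIso}, use H\"older for boundedness of the pairing and the Bergman duality for existence, prove uniqueness by testing against $D^{-t}_{s+t}\mathcal R_{\alpha+pt}(x_0,\cdot)$, and pull the $p=1$ and predual statements back from \cite{U3} in the same way.

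The only difference is that you rederive the Bergman duality \cite[Corollary 1.4]{U} from Hahn--Banach and the self-adjointness of $P_\gamma$ instead of citing it. In that step the Fubini interchange should rest on your density argument or on the boundedness of $E_{\gamma,0}$ on $L^p_\gamma$ (Lemma \ref{LEbdd}). The kernel bound by itself only gives $\int_{\mathbb B}\vert\mathcal R_\gamma(x,y)\vert\,d\nu_\gamma(y)\lesssim 1+\log\frac{1}{1-\vert x\vert^2}$, which is not uniformly bounded.
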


We next consider atomic decompositions of $\mathcal H$-harmonic
Bergman-Besov spaces, which allow every $f\in\mathcal B^p_\alpha$
to be represented as a weighted sum of reproducing kernels.
For $a,b\in\mathbb B$, let $\rho(a,b)=\vert\varphi_a(b)\vert$
be the pseudo-hyperbolic metric, and for $0<r<1$, let
$E_r(a)=\{x\in\mathbb B\colon\rho(x,a)<r\}$ be the pseudo-hyperbolic
ball of radius $r$ centered at $a$.
A sequence $\{a_m\}_{m=1}^\infty$ of points of $\mathbb B$ is called
$r$-separated if $\rho(a_k,a_m)\ge r$ for $k\neq m$.
An $r$-separated sequence is called an $r$-lattice if
$\cup_{m=1}^\infty E_r(a_m)=\mathbb B$.
The following theorem extends the atomic decomposition of Bergman spaces
(see \cite[Theorem 1.1 with Eqn.~(5)]{U2}) to $\alpha\in\mathbb R$.

\begin{theorem}\label{TAtomic}
For $\alpha\in\mathbb R$ and $1\le p<\infty$, let $s$ satisfy
$\alpha+1<p(s+1)$.
There exists an $r_0<1/8$ depending only on $n,\alpha,p,s$ such that
if $\{a_m\}$ is an $r$-lattice with $r<r_0$, then for every
$f\in\mathcal B^p_\alpha$, there exists $\{\lambda_m\}\in\ell^p$ with
$\|\{\lambda_m\}\|_{\ell^p}\sim\|f\|_{\mathcal B^p_\alpha}$ such that
\begin{equation*}
f(x)=\sum_{m=1}^\infty\lambda_m(1-\vert a_m\vert^2)^{(s+n)-(\alpha+n)/p}
\mathcal R_s(x,a_m)
\qquad (x\in\mathbb B),
\end{equation*}
where the series converges absolutely and uniformly on compact subsets
of $\mathbb B$ and also in $\mathcal B^p_\alpha$.
\end{theorem}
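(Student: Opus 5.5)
The plan is to reduce Theorem \ref{TAtomic} to the known atomic decomposition for Bergman spaces (\cite[Theorem 1.1 with Eqn.~(5)]{U2}) using the isomorphisms $D^t_s$ of Theorem \ref{TIso}. The key is that $D^t_s$ acts nicely on the kernels. I expect the relation $D^t_s\mathcal R_s(\cdot,a)=\mathcal R_{s+t}(\cdot,a)$, or at least $D^t_s \mathcal R_s(x,a)=c\,\mathcal R_{s+t}(x,a)$ for a constant $c>0$. This should be how the coefficients $d_m(s,t)$ are defined in Section \ref{SRkernel}: as ratios of the kernel coefficients, so that $d_m(s,t)$ maps the $m$-th coefficient of $\mathcal R_s$ onto that of $\mathcal R_{s+t}$.

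First I fix $\alpha\in\mathbb R$, $p$, and $s$ with $\alpha+1<p(s+1)$, and choose $t$ large enough that $\alpha'=\alpha+pt>-1$. Set $s'=s+t$. Then
\[
\alpha'+1=\alpha+pt+1<p(s+1)+pt\le p(s'+1)
\]
since $p\ge1$ and $t\ge0$. Hence the Bergman-space hypothesis $\alpha'+1<p(s'+1)$ holds. The exponent also matches:
\[
(s'+n)-(\alpha'+n)/p=(s+n)-(\alpha+n)/p+t-t=(s+n)-(\alpha+n)/p.
\]

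Given $f\in\mathcal B^p_\alpha$, Theorem \ref{TBesov} gives $g=D^t_sf\in\mathcal B^p_{\alpha'}$ with $\|g\|_{\mathcal B^p_{\alpha'}}\sim\|f\|_{\mathcal B^p_\alpha}$. The known result for $\alpha'>-1$ then provides an $r_0$, depending on $n,\alpha',p,s'$ and hence only on $n,\alpha,p,s$ once $t$ is fixed as a function of these. For an $r$-lattice with $r<r_0$, it yields $\{\lambda_m\}\in\ell^p$ with $\|\lambda\|\sim\|g\|$ and
\[
g=\sum_m\lambda_m(1-|a_m|^2)^{\gamma}\mathcal R_{s'}(\cdot,a_m),
\]
where the convergence holds in $\mathcal B^p_{\alpha'}$ and locally uniformly.

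Next I apply the inverse $(D^t_s)^{-1}$. By the coefficient-multiplier structure this should be $D^{-t}_{s+t}$, since $d_m(s,t)d_m(s+t,-t)=1$ should follow from the ratio definition. By the kernel relation, $(D^t_s)^{-1}\mathcal R_{s'}(\cdot,a)=c^{-1}\mathcal R_s(\cdot,a)$, and I absorb $c$ into the $\lambda_m$. Because $D^{-t}_{s'}\colon\mathcal B^p_{\alpha'}\to\mathcal B^p_\alpha$ is bounded (Theorem \ref{TIso}), the series converges in $\mathcal B^p_\alpha$. Point evaluations are bounded on $\mathcal B^p_\alpha$ (Corollary \ref{CPointeva}), and the bounds are locally uniform, so the series also converges uniformly on compact sets.

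It remains to get absolute convergence. For this I use the kernel estimate $|\mathcal R_s(x,a)|\lesssim 1$ for $x$ in a compact set, uniformly in $a$. Then, for $|x|\le\rho$, the terms are bounded by $|\lambda_m|(1-|a_m|^2)^{\gamma}$. Summability of these reduces to the standard lattice counting estimate, handled by Hölder's inequality, or by the fact that the same bound already appears in \cite{U2}. The main obstacle is verifying the precise kernel identity for $D^t_s$ and its inverse from the Section \ref{SRkernel} definitions, including the case where $s\le -1$ and the kernels are the extended ones. If only equivalence rather than equality holds, I would instead work with $\mathcal R_{s'}=D^t_s\mathcal R_s$ directly termwise, via the series expansion.
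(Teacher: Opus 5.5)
Your proposal is correct and is essentially the paper's proof: pass to the Bergman zone with $D^t_s$ ($\alpha+pt>-1$), apply the atomic decomposition of \cite{U2} with parameters $(\alpha+pt,\,s+t)$, for which the exponent and the condition $\alpha+1<p(s+1)$ are unchanged, and pull back with $D^{-t}_{s+t}$ through the locally uniformly convergent series. The kernel identity you hedge on is exact for all real $s,t$, namely $D^t_s\mathcal R_s=\mathcal R_{s+t}$ with inverse $D^{-t}_{s+t}$ (Lemma \ref{LPropDst}(ii),(iv), immediate from the coefficient definition and valid for the extended kernels), so there is no constant to absorb and no obstacle there.
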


The next theorem extends the inclusion relations of \cite[Theorem 1.3]{U2}.

\begin{theorem}\label{TInclusion}
Let $\alpha,\beta\in\mathbb R$ and $1\le p,q<\infty$.
\begin{enumerate}
\item[(a)] If $q\geq p$, then
\begin{equation*}
  \mathcal B^p_\alpha\subset\mathcal B^q_\beta\quad
  \text{if and only if}\quad
  \frac{\alpha+n}{p}\leq\frac{\beta+n}{q}.
\end{equation*}
\item[(b)] If $q<p$, then
\begin{equation*}
  \mathcal B^p_\alpha\subset\mathcal B^q_\beta\quad
  \text{if and only if}\quad
  \frac{\alpha+1}{p}<\frac{\beta+1}{q}.
\end{equation*}
\end{enumerate}
In both cases, the inclusion
$i\colon\mathcal B^p_\alpha\to\mathcal B^q_\beta$ is continuous.
\end{theorem}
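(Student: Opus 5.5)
The plan is to reduce everything to the Bergman range through the isomorphisms $D^t_s$ of Theorem \ref{TIso}, and then invoke the known inclusion relations of \cite[Theorem 1.3]{U2} for $\alpha,\beta>-1$.

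Given $\alpha,\beta\in\mathbb R$ and $p,q$, choose one real $t$ so large that $\alpha+pt>-1$ and $\beta+qt>-1$. By Definition \ref{DBesov} and Theorem \ref{TBesov}, for any fixed $s$:
\begin{itemize}
\item $f\in\mathcal B^p_\alpha$ if and only if $D^t_sf\in\mathcal B^p_{\alpha+pt}$, with $\|f\|_{\mathcal B^p_\alpha}\sim\|D^t_sf\|_{\mathcal B^p_{\alpha+pt}}$;
\item the same holds for $q,\beta$.
\end{itemize}
Since $D^t_s$ is a bijection of $\mathcal H(\mathbb B)$ with inverse $D^{-t}_{s'}$ (a coefficient multiplier with $d_m(s,t)>0$), we get
\[
\mathcal B^p_\alpha\subset\mathcal B^q_\beta
\iff
\mathcal B^p_{\alpha+pt}\subset\mathcal B^q_{\beta+qt}.
\]
Both spaces on the right are genuine Bergman spaces.

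The conditions are invariant under this shift:
\begin{itemize}
\item $\frac{(\alpha+pt)+n}{p}=\frac{\alpha+n}{p}+t$ and $\frac{(\beta+qt)+n}{q}=\frac{\beta+n}{q}+t$, so the inequality in (a) is unchanged;
\item likewise $\frac{\alpha+pt+1}{p}<\frac{\beta+qt+1}{q}$ is equivalent to $\frac{\alpha+1}{p}<\frac{\beta+1}{q}$, so the inequality in (b) is unchanged.
\end{itemize}
Hence the characterization for $\alpha,\beta>-1$ from \cite[Theorem 1.3]{U2} transfers verbatim.

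Continuity of the inclusion follows from the norm equivalences:
\[
\|f\|_{\mathcal B^q_\beta}\sim\|D^t_sf\|_{\mathcal B^q_{\beta+qt}}\lesssim\|D^t_sf\|_{\mathcal B^p_{\alpha+pt}}\sim\|f\|_{\mathcal B^p_\alpha}.
\]
The middle inequality comes from continuity of the Bergman inclusion, which holds either as part of the cited result or by the closed graph theorem, since point evaluations are bounded (Corollary \ref{CPointeva}).

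The main obstacle is making sure \cite[Theorem 1.3]{U2} covers exactly the Bergman range with the same conditions. If it does not, I would instead prove the Bergman case directly.
\begin{itemize}
\item \emph{Sufficiency in (a).} Combine the pointwise bound $|f(x)|\lesssim(1-|x|^2)^{-(\alpha+n)/p}\|f\|_{\mathcal B^p_\alpha}$, obtained from subharmonicity-type mean value estimates on pseudo-hyperbolic balls, with $|f|^q=|f|^{q-p}|f|^p$.
\item \emph{Sufficiency in (b).} Apply H\"older's inequality with exponents $p/q$ and its conjugate.
\item \emph{Necessity.} Test on the kernels $\mathcal R_s(\cdot,a)$, using the two-sided norm estimates of $\mathcal R_s(\cdot,a)$ in $\mathcal B^p_\alpha$ from Section \ref{SRkernel}.
\item \emph{Necessity in (b).} Test on radial functions of the form $S_0$-type $\mathcal H$-harmonic functions with prescribed boundary growth, or use the atomic decomposition of Theorem \ref{TAtomic} with suitably chosen $\ell^p$ coefficients to produce an $f\in\mathcal B^p_\alpha\setminus\mathcal B^q_\beta$ when $\frac{\alpha+1}{p}\ge\frac{\beta+1}{q}$.
\end{itemize}
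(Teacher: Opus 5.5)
Your proposal is correct and follows essentially the same route as the paper: you choose a single $t$ with $\alpha+pt>-1$ and $\beta+qt>-1$, use Theorem \ref{TIso} to get $D^t_s(\mathcal B^p_\alpha)=\mathcal B^p_{\alpha+pt}$ and $D^t_s(\mathcal B^q_\beta)=\mathcal B^q_{\beta+qt}$, note that both conditions are invariant under the shift by $t$, and invoke \cite[Theorem 1.3]{U2} in both the $q\ge p$ and $q<p$ cases. Your fallback plan is unnecessary, because that cited theorem covers exactly the Bergman range $\alpha,\beta>-1$ with the same conditions.
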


\section{Preliminaries}

We denote positive constants whose exact values are inessential by
the letter $C$.
For two positive expressions $X$ and $Y$, we write $X\lesssim Y$
to mean $X\leq C Y$.
If both $X\leq CY$ and $Y\leq CX$, we write $X\sim Y$.

For $x,y\in\mathbb B$, we write
\begin{equation*}
  [x,y]:=\sqrt{1-2\langle x,y\rangle+\vert x\vert^2\vert y\vert^2}.
\end{equation*}
Clearly, $[x,y]$ is symmetric; when $y=0$, $[x,0]=1$ and otherwise
\begin{equation}\label{xyis}
[x,y]=\bigl\vert \vert y\vert x-\frac{y}{\vert y\vert}\bigr\vert.
\end{equation}

\begin{lemma}\label{Ltxy}
For all $x,y\in\mathbb B$ and $0\le t\le 1$, we have
\begin{equation*}
[tx,y]\ge\frac{[x,y]}{2}.
\end{equation*}
\end{lemma}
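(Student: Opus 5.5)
The proof is a direct geometric argument based on the representation \eqref{xyis}.

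First I dispose of the trivial case $y=0$. Then $[tx,0]=1=[x,0]$ for every $x$ and $t$, and the inequality $1\ge 1/2$ is immediate.

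Now let $y\neq 0$. Set $z=\vert y\vert x$ and $w=y/\vert y\vert$, so that $\vert z\vert<1$ and $\vert w\vert=1$. Applying \eqref{xyis} to $x$ and to $tx$ gives
\begin{equation*}
[x,y]=\vert z-w\vert, \qquad [tx,y]=\vert tz-w\vert.
\end{equation*}
The claim is therefore equivalent to $\vert z-w\vert\le 2\vert tz-w\vert$ for $0\le t\le 1$.

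The inequality follows from two elementary estimates. By the triangle inequality,
\begin{equation*}
\vert z-w\vert\le \vert z-tz\vert+\vert tz-w\vert=(1-t)\vert z\vert+\vert tz-w\vert.
\end{equation*}
By the reverse triangle inequality, together with $\vert w\vert=1$ and $\vert z\vert\le 1$,
\begin{equation*}
\vert tz-w\vert\ge 1-t\vert z\vert\ge \vert z\vert-t\vert z\vert=(1-t)\vert z\vert.
\end{equation*}
Substituting the second estimate into the first yields $\vert z-w\vert\le 2\vert tz-w\vert$, which is the desired bound.

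The only step requiring care is the comparison $(1-t)\vert z\vert\le\vert tz-w\vert$. It uses both that $w$ lies on the unit sphere and that $z$ lies in the closed unit ball. This is exactly where the form \eqref{xyis} of $[x,y]$ is essential.
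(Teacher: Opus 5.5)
Your proof is correct and is essentially the paper's argument. Both use \eqref{xyis} to reduce to $\vert tz-w\vert\ge \vert z-w\vert/2$ with $w\in\mathbb S$, then combine the triangle inequality with $(1-t)\vert z\vert\le 1-t\vert z\vert\le\vert tz-w\vert$. Your explicit treatment of the case $y=0$ is a harmless detail that the paper leaves implicit.
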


\begin{proof}
By \eqref{xyis}, it suffices to show that
$\vert tx-\zeta\vert\ge\vert x-\zeta\vert/2$ for all
$x\in\mathbb B$, $\zeta\in\mathbb S$ and $0\le t\le 1$.
This follows from the triangle inequality
$\vert x-\zeta\vert\le \vert x-tx\vert +\vert tx-\zeta\vert$ and the fact that
$\vert x-tx\vert=\vert x\vert-t\vert x\vert\le 1-t\vert x\vert\le \vert tx-\zeta\vert$.
\end{proof}

The following lemma follows from \cite[Lemma 2.15]{U} and \eqref{xyis}.
\begin{lemma}\label{LInt01}
Let $b>-1$ and $c\in\mathbb R$.
There exists $C=C(n,b,c)$ such that for all $x,y\in\mathbb B$,
\begin{equation*}
\int_0^1
\frac{(1-t)^b}{[tx,y]^{1+b+c}}\,dt
\le C
\begin{cases}
\dfrac{1}{[x,y]^{c}},&\text{if $c>0$};\\
1+\log\dfrac{1}{[x,y]},&\text{if $c=0$};\\
1,&\text{if $c<0$}.
\end{cases}
\end{equation*}
\end{lemma}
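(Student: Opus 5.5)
The statement to prove is Lemma \ref{LInt01}: for $b>-1$ and $c\in\mathbb R$,
\[
\int_0^1\frac{(1-t)^b}{[tx,y]^{1+b+c}}\,dt\lesssim
\begin{cases} [x,y]^{-c}, & c>0,\\ 1+\log\frac{1}{[x,y]}, & c=0,\\ 1, & c<0.\end{cases}
\]

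The plan is to reduce to a one-variable estimate via \eqref{xyis}. If $y=0$, then $[tx,0]=1$ and the integral is the constant $\int_0^1(1-t)^b\,dt<\infty$ (using $b>-1$), which is dominated by every right-hand side since $[x,y]\le 2$. For $y\neq0$, write $z=\vert y\vert x\in\mathbb B$ and $\zeta=y/\vert y\vert\in\mathbb S$. Then $[tx,y]=\vert tz-\zeta\vert$ and $[x,y]=\vert z-\zeta\vert$. So it suffices to bound
\[
\int_0^1(1-t)^b\,\vert tz-\zeta\vert^{-(1+b+c)}\,dt
\]
for $z\in\mathbb B$ and $\zeta\in\mathbb S$, with constants independent of $z$ and $\zeta$. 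This is the content of \cite[Lemma 2.15]{U}, and one may simply quote it. A self-contained argument would run as follows.

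\textbf{Key lower bound.} We claim
\[
\vert tz-\zeta\vert\gtrsim (1-t)+\vert z-\zeta\vert.
\]
From the proof of Lemma \ref{Ltxy}, $\vert tz-\zeta\vert\ge\vert z-\zeta\vert/2$. Also $\vert tz-\zeta\vert\ge 1-t\vert z\vert\ge 1-t$. Averaging the two bounds gives the claim.

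\textbf{Reduction and splitting.} If $1+b+c\le0$, the integrand is at most $2^{-(1+b+c)}(1-t)^b$, which is integrable, so the integral is bounded; this case has $c<0$, as required. Otherwise $1+b+c>0$, and with $\delta=\vert z-\zeta\vert\in(0,2]$ the lower bound gives
\[
\int_0^1\frac{(1-t)^b}{\vert tz-\zeta\vert^{1+b+c}}\,dt\lesssim\int_0^1\frac{u^b}{(u+\delta)^{1+b+c}}\,du .
\]
Split this integral at $u=\delta$.
\begin{itemize}
\item On $u<\delta$, bound the denominator below by $\delta^{1+b+c}$. Since $b>-1$, the piece is at most $\delta^{-(1+b+c)}\delta^{b+1}/(b+1)\lesssim\delta^{-c}$.
\item On $u>\delta$, bound the denominator below by $u^{1+b+c}$. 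The piece is at most $\int_\delta^1 u^{-1-c}\,du$.
\end{itemize}

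\textbf{Cases in $c$.} The second piece equals $\lesssim\delta^{-c}$ when $c>0$, equals $\log(1/\delta)$ when $c=0$, and is $\lesssim 1$ when $c<0$. For the first piece, $\delta^{-c}\le 2^{-c}$ when $c<0$. Combining the two pieces in each case gives exactly the three bounds of the lemma in terms of $\delta=[x,y]$.

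There is no real obstacle. The only care needed is that the constants are uniform in $z$ and $\zeta$, which holds because they depend only on $b$ and $c$.
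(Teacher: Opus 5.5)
Your proposal is correct and takes the paper's route. The paper proves this lemma only by the reduction $[tx,y]=\bigl\vert t\vert y\vert x-y/\vert y\vert\bigr\vert$ from \eqref{xyis} followed by an appeal to \cite[Lemma 2.15]{U}, which is exactly your reduction. Your supplementary self-contained argument is also sound: you use the bound $\vert tz-\zeta\vert\gtrsim(1-t)+\vert z-\zeta\vert$, dispose of the case $1+b+c\le 0$ separately, and split at $u=\delta$ when $1+b+c>0$. This fills in the content of the cited lemma, with the second piece read as zero when $\delta\ge 1$.
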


For $s,t\in\mathbb R$, define the operator
\begin{equation*}
E_{s,t} f(x)=(1-\vert x\vert^2)^t
\int_{\mathbb B} f(y)\frac{(1-\vert y\vert^2)^s}{[x,y]^{n+s+t}}\,d\nu(y).
\end{equation*}
For a proof of the following lemma, see, for example, \cite[Theorem 1.6]{GKU}.

\begin{lemma}\label{LEbdd}
Let $\alpha,s,t\in\mathbb R$ and $1\le p<\infty$.
Then $E_{s,t}\colon L^p_\alpha\to L^p_\alpha$ is bounded
if and only if $-pt<\alpha+1<p(s+1)$.
\end{lemma}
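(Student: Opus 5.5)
The statement just above is Lemma \ref{LEbdd}, the two-sided criterion for boundedness of $E_{s,t}$ on $L^p_\alpha$. I would use Schur's test for sufficiency and explicit test functions for necessity. First I rewrite the operator with respect to $\nu_\alpha$. With $w(y)=(1-\vert y\vert^2)$ we have
\begin{equation*}
E_{s,t}f(x)=V_\alpha\int_{\mathbb B} K(x,y) f(y)\,d\nu_\alpha(y),
\qquad
K(x,y)=\frac{(1-\vert x\vert^2)^t(1-\vert y\vert^2)^{s-\alpha}}{[x,y]^{n+s+t}}.
\end{equation*}
The standard ingredient is the Forelli--Rudin type estimate
\begin{equation*}
\int_{\mathbb B}\frac{(1-\vert y\vert^2)^{b}}{[x,y]^{n+b+c}}\,d\nu(y)\lesssim
\frac{1}{(1-\vert x\vert^2)^{c}}\qquad (b>-1,\ c>0),
\end{equation*}
which holds with the usual $\log$ and constant versions when $c\le 0$. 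I would obtain it by integrating in polar coordinates. The sphere integral $\int_{\mathbb S}[x,r\zeta]^{-(n+b+c)}d\sigma(\zeta)$ is $\sim (1-r\vert x\vert)^{-(1+b+c)}$ for exponent $>n-1$. This reduces the estimate to a one-variable integral of the type in Lemma \ref{LInt01}.

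For sufficiency when $p>1$, I apply Schur's test with test function $h(y)=(1-\vert y\vert^2)^{-\gamma}$. I need
\begin{equation*}
\int K(x,y)h(y)^{p'}d\nu_\alpha(y)\lesssim h(x)^{p'},\qquad
\int K(x,y)h(x)^{p}d\nu_\alpha(x)\lesssim h(y)^{p}.
\end{equation*}
By the estimate above, the first condition requires $s-\gamma p'>-1$ and $t+\gamma p'>0$, so $\gamma p'\in(-t,s+1)$. The second requires $\alpha+t-\gamma p>-1$ and $s-\alpha+\gamma p>0$, so $\gamma p\in(\alpha-s,\alpha+t+1)$. After dividing by $p$ and by $p'$, these two open intervals for $\gamma$ intersect exactly when $-pt<\alpha+1<p(s+1)$. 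The main bookkeeping obstacle is checking this intersection; it mirrors the classical Zhu/Kures argument. For $p=1$ I would instead use Fubini directly. The condition $\alpha+1<s+1$ makes the $y$-weight integrable against the kernel in $x$. The bound $\int (1-\vert x\vert^2)^{t+\alpha}[x,y]^{-(n+s+t)}d\nu(x)\lesssim(1-\vert y\vert^2)^{\alpha-s}$ needs $t+\alpha>-1$ and $s-\alpha>0$, which is exactly the stated condition with $p=1$.

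For necessity, first suppose $\alpha+1\ge p(s+1)$. I take $f=\chi_{\{\vert y\vert>1/2\}}(1-\vert y\vert^2)^{\delta}$ with $\delta$ chosen so that $f\in L^p_\alpha$ but $f(y)(1-\vert y\vert^2)^s\notin L^1$. This is possible because the two conditions on $\delta$ are $p\delta+\alpha>-1$ and $\delta+s\le -1$, which are compatible under this assumption. Then $E_{s,t}f(0)$ diverges, since $[0,y]=1$. Second, suppose $\alpha+1\le -pt$. Then for a nonnegative $f\in L^p_\alpha$ supported in $\vert y\vert<1/2$, we have $E_{s,t}f(x)\sim(1-\vert x\vert^2)^t$. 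This function is not in $L^p_\alpha$ because $pt+\alpha\le-1$. So the operator is unbounded in either case, which completes the proof.
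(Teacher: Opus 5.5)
The paper does not prove this lemma; it cites \cite[Theorem 1.6]{GKU}. Your sufficiency argument is the standard one and is correct. Schur's test with $h(y)=(1-\vert y\vert^2)^{-\gamma}$ gives the two conditions $-t<\gamma p'<s+1$ and $\alpha-s<\gamma p<\alpha+t+1$, and their intersection is nonempty exactly when $-pt<\alpha+1<p(s+1)$. The Fubini argument for $p=1$ is also right. Your necessity argument for $\alpha+1\le -pt$ is fine as well, including the endpoint, since $\int_{\mathbb B}(1-\vert x\vert^2)^{-1}\,d\nu=\infty$.

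The gap is the endpoint $\alpha+1=p(s+1)$ in the other half of the necessity. Your two requirements on $\delta$ amount to $-(\alpha+1)/p<\delta\le -(s+1)$, and this interval is empty when $\alpha+1=p(s+1)$. So your claim that the conditions are compatible holds only when $\alpha+1>p(s+1)$, whereas the lemma says the strict inequality $\alpha+1<p(s+1)$ is necessary.

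For $p>1$ a logarithmic factor repairs this, as in the necessity part of the proof of Theorem \ref{TProjBesov}. Take $f(y)=\chi_{\{\vert y\vert>1/2\}}(1-\vert y\vert^2)^{-(\alpha+1)/p}\bigl(\log\frac{1}{1-\vert y\vert^2}\bigr)^{-1}$. It lies in $L^p_\alpha$ because $p>1$, while $\int_{\mathbb B} f(y)(1-\vert y\vert^2)^s\,d\nu(y)=\infty$.

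For $p=1$, i.e.\ $\alpha=s$, no pointwise-divergence argument can work. Since $[x,y]\ge 1-\vert x\vert$ and $(1-\vert y\vert^2)^s=(1-\vert y\vert^2)^\alpha$, $E_{s,t}f(x)$ is finite for every $f\in L^1_\alpha$ and every $x$. You need a norm argument instead. If $E_{s,t}$ is bounded on $L^1_\alpha$, then Tonelli and duality against $g\equiv 1$ force $\sup_{y}\int_{\mathbb B}K(x,y)\,d\nu_\alpha(x)<\infty$. When $\alpha=s$ and $\alpha+t>-1$ (otherwise your second case already applies), this integral is $\sim 1+\log\frac{1}{1-\vert y\vert^2}$ by the two-sided Forelli--Rudin estimate with $c=0$, which is unbounded. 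Alternatively, test on normalized indicators of small balls approaching $\mathbb S$.

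A minor point: divergence of $E_{s,t}f$ at the single point $x=0$ does not by itself show $E_{s,t}f\notin L^p_\alpha$. Note that $[x,y]\sim 1$ for $\vert x\vert<1/2$, so $E_{s,t}f=\infty$ on that whole ball.
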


\section{Reproducing Kernels and the Operators $D^t_s$}\label{SRkernel}

For $\alpha>-1$, point evaluations are bounded on the Hilbert space
$\mathcal B^2_\alpha$ and so, for each $x\in\mathbb B$, there exists
$\mathcal R_\alpha(x,\cdot)\in\mathcal B^2_\alpha$ such that
\begin{equation*}
f(x)=\int_\mathbb B f(y)\overline{\mathcal R_\alpha(x,y)}\,d\nu_\alpha(y)
\qquad (f\in\mathcal B^2_\alpha).
\end{equation*}
The reproducing kernel $\mathcal R_\alpha$ is real valued (so
conjugation can be deleted), symmetric, and $\mathcal H$-harmonic
in each variable.
It has the series expansion
\begin{equation}\label{seriesRalpha}
\mathcal R_\alpha(x,y)=\sum_{m=0}^\infty
c_m(\alpha)S_m(\vert x\vert)S_m(\vert y\vert)Z_m(x,y),
\end{equation}
where the coefficient $c_m(\alpha)$ is determined by
\begin{equation*}
\frac{1}{c_m(\alpha)}
=\frac{n}{V_\alpha}
\int_0^1 r^{2m+n-1}(1-r^2)^\alpha S_m^2(r)\,dr.
\end{equation*}
Here, $Z_m(\cdot,\cdot)$, the zonal harmonic of degree $m$, is the
reproducing kernel of $H_m(\mathbb S)$ extended to
$\mathbb R^n\times\mathbb R^n$ by homogeneity (see \cite[Chapter 5]{ABR}).
The series in \eqref{seriesRalpha} converges absolutely and uniformly on
$K\times\mathbb B$ for any compact $K\subset\mathbb B$.

In (\cite[Corollary 3.4]{U}), it is shown that there exist constants
$D_k=D_k(\alpha,n)$ with $D_0>0$ such that the asymptotic expansion
\begin{equation}\label{cmbig}
c_m(\alpha)\approx\frac{\Gamma(m+\alpha+n)}{\Gamma(m+n-1)}
\sum_{k=0}^\infty \frac{D_k}{m^k}
\qquad (m\to\infty),
\end{equation}
holds.
More precisely, for each $K\ge 1$, we have
\begin{equation*}
c_m(\alpha)=\frac{\Gamma(m+\alpha+n)}{\Gamma(m+n-1)}
\biggl(\,\sum_{k=0}^{K-1}\frac{D_k}{m^k}+O\Bigl(\frac{1}{m^K}\Bigr)\biggr)
\qquad (m\to\infty).
\end{equation*}
Thus, by Stirling's formula, $c_m(\alpha)\sim m^{\alpha+1}$ for $m\ge 1$.

Integrating in polar coordinates shows that for
$f=\sum_{m=0}^\infty S_m(\vert x\vert)f_m$,
$g=\sum_{m=0}^\infty S_m(\vert x\vert)g_m$,
\begin{equation*}
\langle f,g\rangle_{\mathcal B^2_\alpha}
=\int_{\mathbb B}f\bar{g}\,d\nu_\alpha
=\sum_{m=0}^\infty\frac{1}{c_m(\alpha)}\langle f_m,g_m\rangle_{L^2(\mathbb S)},
\end{equation*}
and so, for $\alpha>-1$,
\begin{equation}\label{B2alpha}
\mathcal B^2_\alpha=\Bigl\{f=\sum_{m=0}^\infty
S_m(\vert x\vert)f_m\in\mathcal H(\mathbb B)\colon
\sum_{m=1}^\infty\frac{1}{m^{\alpha+1}}\|f_m\|^2_{L^2(\mathbb S)}<\infty\Bigr\}.
\end{equation}
In the above characterization one can remove the assumption $\alpha>-1$ and
extend $\mathcal B^2_\alpha$ to all $\alpha\in\mathbb R$.
We show later in Remark \ref{Rp2} that this agrees with Definition \ref{DBesov}.

For $\alpha>-1$, $\mathcal B^2_\alpha$ has a natural inner product and a
corresponding reproducing kernel.
On the other hand, for $\alpha\le -1$, derivatives must be used, but
there is no natural choice for their type or order.
Here, we follow the approach of \cite{K} where holomorphic (Bergman-)Besov
spaces on the unit ball of $\mathbb C^n$ are considered.
We first define $\mathcal R_\alpha$ for $\alpha\le -1$ and
call these functions as reproducing kernels, although only later
(Remark \ref{Rp2}) we verify that these are indeed reproducing kernels
of $\mathcal B^2_\alpha$ with respect to a suitable inner product.

We first mention two special cases that have been studied in detail before.
When $\alpha=-1$, $\mathcal B^2_{-1}=\mathcal H^2$ is the Hardy space whose
reproducing kernel is the extended hyperbolic Poisson kernel
\begin{equation*}
\mathcal R_{-1}(x,y)=\sum_{m=0}^\infty
S_m(\vert x\vert)S_m(\vert y\vert)Z_m(x,y)
\end{equation*}
(see \cite[Theorem 2.5]{St2}).
When $\alpha=-n$, $\mathcal B^2_{-n}$ is M\"{o}bius invariant
and with respect to the M\"{o}bius invariant (semi-)inner product, the
reproducing kernel is (see \cite{BEY})
\begin{equation*}
\mathcal R_{-n}(x,y)=1+\sum_{m=1}^\infty
\frac{\Gamma(m)}{\Gamma(m+n-1)}S_m(\vert x\vert)S_m(\vert y\vert)Z_m(x,y).
\end{equation*}

\begin{definition}\label{Dkernel}
For $\alpha\le -1$ and $x,y\in\mathbb B$, we define
\begin{equation*}
\mathcal R_{\alpha}(x,y)=\sum_{m=0}^\infty c_m(\alpha)
S_m(\vert x\vert)S_m(\vert y\vert)Z_m(x,y),
\end{equation*}
where $c_0(\alpha)=1$ and
\begin{equation}\label{cmsmall}
c_m(\alpha)=\frac{\Gamma(m)}{\Gamma(m-\alpha-1)},
\qquad m\ge 1.
\end{equation}
\end{definition}
The only properties of $c_m(\alpha)$ we will employ later are
$c_m(\alpha)>0$ and
\begin{equation}\label{cmasym}
c_m(\alpha)\sim m^{\alpha+1}
\qquad (m\ge 1),
\end{equation}
for all $\alpha\in\mathbb R$.
Other than these, the specific formula \eqref{cmsmall} will not have
any future role and the only reason for its choice is to incorporate
the two special cases mentioned above.

Using the above coefficients $c_m(\alpha)$ $(\alpha\in\mathbb R)$,
we define differential operators $D^t_s$ which act as coefficient
multipliers on the series expansion
of $f\in\mathcal H(\mathbb B)$.

\begin{definition}
Let $s,t\in\mathbb R$.
For $f\in\mathcal H(\mathbb B)$ with
$f(x)=\sum_{m=0}^\infty S_m(\vert x\vert)f_m(x)$, we define
\begin{equation*}
D^t_sf(x)=\sum_{m=0}^\infty \frac{c_m(s+t)}{c_m(s)}
S_m(\vert x\vert)f_m(x).
\end{equation*}
\end{definition}

By \eqref{cmasym}, $c_m(s+t)/c_m(s)\sim m^t$ and $D^t_s$ acts as a
differential (integral if $t<0$) operator of order $t$.
The role of the parameter $s$ is minor, and its main role is to have
exact formula in part (ii) of the lemma below.

\begin{lemma}\label{LPropDst}
For $s,t\in\mathbb R$, the operator $D^t_s$ satisfies the following
properties.
\begin{enumerate}
\item[(i)] $D^t_s:\mathcal H(\mathbb B)\to\mathcal H(\mathbb B)$
is continuous, where $\mathcal H(\mathbb B)$ is endowed with the topology
of uniform convergence on compact subsets.
\item[(ii)] $D^t_s\mathcal R_s(x,y)=\mathcal R_{s+t}(x,y)$,
acting on the variable $x$ (or $y$).
\item[(iii)] $D^{u}_{s+t}\circ D^t_s=D^{u+t}_s$.
\item[(iv)] $D^0_s=\textup{Id}.$ and $D^t_s$ has the two-sided inverse
$D^{-t}_{s+t}$, i.e., $D^{-t}_{s+t}\circ D^t_s=\textup{Id}.$ and
$D^t_s\circ D^{-t}_{s+t}=\textup{Id}.$
\end{enumerate}
\end{lemma}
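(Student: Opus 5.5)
The plan is to work entirely at the level of the series expansion \eqref{fseries}, where $D^t_s$ acts diagonally as multiplication of the $m$-th term by $d_m(s,t)=c_m(s+t)/c_m(s)$. Parts (ii)--(iv) are then identities between coefficient sequences. Part (i) is a growth estimate on these multipliers, combined with control of $f_m$ on compact sets.

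For (i), first note from \eqref{cmasym} that $d_m(s,t)\sim m^t$, so $d_m(s,t)\le C(1+m)^{|t|}$. Next I need a bound on $f_m$ in terms of $f$. Fix $f\in\mathcal H(\mathbb B)$ and $0<r<1$. By orthogonality of spherical harmonics on the sphere $r\mathbb S$,
\begin{equation*}
S_m(r)r^m\|f_m\|_{L^2(\mathbb S)}\le \|f(r\,\cdot)\|_{L^2(\mathbb S)}\le \sup_{|x|=r}|f(x)|.
\end{equation*}
The functions $S_m$ are positive on $[0,1]$ and bounded below there uniformly in $m$; I would take this from \cite{St1}, or derive it from the positive coefficients of ${}_2F_1$ when $n=2$ and the standard bounds otherwise. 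Hence $\|f_m\|_{L^2(\mathbb S)}\le C r^{-m}\sup_{|x|=r}|f|$. The standard estimate $\sup_{\mathbb S}|f_m|\le C m^{n/2}\|f_m\|_{L^2(\mathbb S)}$ for spherical harmonics, together with the bound $S_m\le 1$ (again from \cite{St1}), then gives, for $|x|\le\rho<r$,
\begin{equation*}
|d_m(s,t)S_m(|x|)f_m(x)|\le C(1+m)^{|t|+n/2}(\rho/r)^m\sup_{|y|=r}|f(y)|.
\end{equation*}
This is summable in $m$. It follows that the series defining $D^t_sf$ converges absolutely and uniformly on $\overline{\rho\mathbb B}$, and that
\begin{equation*}
\sup_{\rho\mathbb B}|D^t_sf|\le C\sup_{r\mathbb S}|f|.
\end{equation*}
Since this inequality is linear in $f$, it yields continuity of $D^t_s$. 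It remains to see that $D^t_sf$ is $\mathcal H$-harmonic. Each term $S_m(|x|)f_m(x)$ is $\mathcal H$-harmonic, and uniform limits of $\mathcal H$-harmonic functions are $\mathcal H$-harmonic (for instance via the mean value property, \cite{St1}). The main obstacle in the whole lemma is exactly this step, namely the uniform bounds on $S_m$; everything else is bookkeeping.

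For (ii), fix $y$. Then $x\mapsto\mathcal R_s(x,y)$ has the expansion \eqref{fseries} with components
\begin{equation*}
f_m(x)=c_m(s)S_m(|y|)Z_m(x,y)\in H_m.
\end{equation*}
By uniqueness of the expansion, applying $D^t_s$ multiplies the $m$-th component by $c_m(s+t)/c_m(s)$. This turns $c_m(s)$ into $c_m(s+t)$, which is the series for $\mathcal R_{s+t}$. The statement for the variable $y$ follows by symmetry.

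For (iii), the composed multiplier is
\begin{equation*}
\frac{c_m(s+t+u)}{c_m(s+t)}\cdot\frac{c_m(s+t)}{c_m(s)}=\frac{c_m(s+u+t)}{c_m(s)},
\end{equation*}
which is the multiplier of $D^{u+t}_s$. For (iv), the multiplier of $D^0_s$ is $c_m(s)/c_m(s)=1$. Applying (iii) with $u=-t$ gives $D^{-t}_{s+t}\circ D^t_s=D^0_s=\textup{Id}$. The other composition $D^t_s\circ D^{-t}_{s+t}$ reduces in the same way, by applying (iii) with base $s+t$ and shift $-t$, to $D^0_{s+t}=\textup{Id}$.
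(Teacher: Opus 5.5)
\textbf{Verdict: your proposal is correct in structure. It takes a more self-contained route than the paper for (i), but it contains one false bound that needs a (harmless) correction.}

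Your parts (ii)--(iv) are what the paper means by ``follow from the definition.'' They are identities between the multiplier sequences $c_m(s+t)/c_m(s)$, combined with uniqueness of the expansion \eqref{fseries}.

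For (i) the paper gives no argument and simply cites \cite[Lemma 3.2]{U3}. Your route is a self-contained Cauchy-type estimate: you extract $\|f_m\|_{L^2(\mathbb S)}\lesssim r^{-m}\sup_{r\mathbb S}|f|$ from orthogonality and sum it against the polynomially growing multipliers. This gives an explicit seminorm inequality $\sup_{\rho\mathbb B}|D^t_sf|\le C\sup_{r\mathbb S}|f|$. It also shows that the defining series converges uniformly on compacta, which is exactly what is used later, e.g.\ when $D^{-t}_{s+t}$ is pushed into a series in the proof of Theorem \ref{TAtomic}.

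The false step is your claim that $S_m\le 1$; for $n\ge 3$ this fails. From the integral representation used in the proof of Theorem \ref{TPartialKernel},
\begin{equation*}
S_m(r)=\frac{1}{B(m,n-1)}\int_0^1 t^{m-1}(1-t)^{\frac{n}{2}-1}(1-r^2t)^{\frac{n}{2}-1}\,dt \qquad (m\ge 1).
\end{equation*}
Hence $S_m$ is nonincreasing on $[0,1]$ (and identically $1$ when $n=2$), so
\begin{equation*}
1=S_m(1)\le S_m(r)\le S_m(0)=\frac{\Gamma(n/2)\,\Gamma(m+n-1)}{\Gamma(n-1)\,\Gamma(m+n/2)}\sim m^{\frac{n}{2}-1}.
\end{equation*}
Consequences for your argument:
\begin{enumerate}
\item[(1)] Your lower bound is right, even with constant $1$. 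That is all the orthogonality step needs.
\item[(2)] Your upper bound must be replaced by $S_m\lesssim m^{n/2-1}$. This is \cite[Lemma 2.13]{U}, which the paper also uses in the proof of Lemma \ref{LRbig}.
\item[(3)] Your termwise bound then picks up an extra factor $m^{n/2-1}$. This is absorbed by $(\rho/r)^m$, so summability, the estimate $\sup_{\rho\mathbb B}|D^t_sf|\le C\sup_{r\mathbb S}|f|$, and the conclusion of (i) all stand.
\end{enumerate}
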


Parts (ii)--(iv) follow from the definition.
For part (i) see \cite[Lemma 3.2]{U3}.

\section{Estimates of Derivatives of Reproducing Kernels}

In this section, we estimate various derivatives of the reproducing kernels.

\begin{lemma}\label{Lderzm}
Let $k\ge 1$.
There exists a constant $C=C(n,k)$ such that
for all multi-indices $\kappa$ with $\vert\kappa\vert=k$,
$m\ge k$ and $x,y\in\mathbb B$,
\begin{equation*}
\bigl\vert\partial^\kappa Z_m(x,y)\bigr\vert
\le C m^{n-2+k}\,
\vert x\vert^{m-k}\vert y\vert^m,
\end{equation*}
where differentiation is applied to the first variable.
\end{lemma}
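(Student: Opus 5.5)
Fix $y$ and use homogeneity. The function $Z_m(\cdot,y)$ is a harmonic polynomial, homogeneous of degree $m$ in the first variable. It also satisfies $Z_m(x,y)=\vert x\vert^m\vert y\vert^m Z_m(x/\vert x\vert,y/\vert y\vert)$, and it is linear in the sense that $Z_m(x,y)=\vert y\vert^m Z_m(x,\eta)$ with $\eta=y/\vert y\vert$ (for $y\ne0$; if $y=0$ and $m\ge k\ge1$ both sides vanish). So it suffices to prove the bound for $\vert y\vert=1$:
\begin{equation*}
\vert\partial^\kappa Z_m(x,\eta)\vert\le Cm^{n-2+k}\vert x\vert^{m-k}\qquad(\eta\in\mathbb S).
\end{equation*}
Since $\partial^\kappa Z_m(\cdot,\eta)$ is homogeneous of degree $m-k$ (and identically zero if $m<k$, excluded anyway), it is enough to bound it on $\vert x\vert=1$, i.e. to show $\sup_{\zeta\in\mathbb S}\vert\partial^\kappa Z_m(\zeta,\eta)\vert\le Cm^{n-2+k}$.

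The main step is a Bernstein/Markov-type inequality for harmonic homogeneous polynomials. The cleanest route I would take is induction on $k$ using the standard estimate for $p\in H_m$:
\begin{equation*}
\sup_{\mathbb S}\vert\nabla p\vert\le C(n)\,m\,\sup_{\mathbb S}\vert p\vert .
\end{equation*}
This follows, for example, from the Cauchy estimates for harmonic functions applied on a ball of radius about $1/m$ around a point $\zeta\in\mathbb S$. On the ball $\vert x\vert\le 1+1/m$ we have $\vert p(x)\vert\le(1+1/m)^m\sup_{\mathbb S}\vert p\vert\le e\sup_{\mathbb S}\vert p\vert$, and the interior gradient estimate for harmonic functions on $B(\zeta,1/m)$ gives $\vert\nabla p(\zeta)\vert\le C n m\cdot e\sup_{\mathbb S}\vert p\vert$. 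Since each $\partial_{x_i}p\in H_{m-1}$ is again harmonic and homogeneous, iterating $k$ times gives
\begin{equation*}
\sup_{\mathbb S}\vert\partial^\kappa p\vert\le C^k m(m-1)\cdots(m-k+1)\sup_{\mathbb S}\vert p\vert\le C(n,k)m^k\sup_{\mathbb S}\vert p\vert .
\end{equation*}
(One could also skip the iteration and apply the $k$-th order Cauchy estimate on $B(\zeta,1/m)$ directly, which gives $m^k$ at once.)

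It remains to bound $\sup_{\zeta\in\mathbb S}\vert Z_m(\zeta,\eta)\vert$. This is the standard fact that $\vert Z_m(\zeta,\eta)\vert\le Z_m(\eta,\eta)=\dim H_m(\mathbb S)\le Cm^{n-2}$ (see \cite[Chapter 5]{ABR}). Combining the two bounds gives $\vert\partial^\kappa Z_m(\zeta,\eta)\vert\le Cm^{n-2+k}$ on $\mathbb S$. Homogeneity of degree $m-k$ in $x$ then supplies the factor $\vert x\vert^{m-k}$, and homogeneity in $y$ supplies $\vert y\vert^m$.

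The only step needing care is the Bernstein-type inequality. With the Cauchy-estimate argument on the slightly enlarged ball, its constant depends only on $n$ and $k$ and not on $m$ or $\eta$, and this uniformity is what the lemma requires.
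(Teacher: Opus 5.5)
Your proof is correct, but it splits the power $m^{n-2+k}$ differently from the paper.

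\textbf{The paper's route.} It quotes Seeley's $L^2\to L^\infty$ estimate $|\partial^\kappa p(\zeta)|\le Cm^{n/2-1+k}\|p\|_{L^2(\mathbb S)}$ for $p\in H_m$. It combines this with $\|Z_m(\cdot,\eta)\|^2_{L^2(\mathbb S)}=\dim H_m\lesssim m^{n-2}$, so that $m^{n-2+k}=m^{n/2-1+k}\cdot m^{n/2-1}$.

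\textbf{Your route.} You prove a sup-to-sup Bernstein inequality $\sup_{\mathbb S}|\partial^\kappa p|\le C(n,k)\,m^k\sup_{\mathbb S}|p|$ using interior Cauchy estimates on $B(\zeta,1/m)$. The bound $(1+1/m)^m\le e$ on the enlarged ball is what makes the constant independent of $m$. You then combine it with $|Z_m(\zeta,\eta)|\le\dim H_m$, so that $m^{n-2+k}=m^k\cdot m^{n-2}$.

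\textbf{Comparison.} The paper's argument is a one-line citation of a sharp external result. Yours is self-contained and uses only standard interior estimates for harmonic functions and a standard bound on zonal harmonics. Nothing is lost: since $|p(\zeta)|=|\langle p,Z_m(\cdot,\zeta)\rangle_{L^2(\mathbb S)}|\le(\dim H_m)^{1/2}\|p\|_{L^2(\mathbb S)}$, your Bernstein inequality in fact recovers the Seeley bound the paper cites.

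The final homogeneity reductions are the same in both proofs: degree $m-k$ in $x$, degree $m$ in $y$, and the trivial cases $x=0$ or $y=0$. One wording fix: $Z_m(x,y)=|y|^mZ_m(x,\eta)$ expresses homogeneity in $y$, not linearity.
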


\begin{proof}
It is shown in \cite[Theorem 4(b)]{Se} that there exists $C=C(n,k)$
such that for all $\vert\kappa\vert=k$, $m\ge k$, $p\in H_m$
and $\zeta\in\mathbb S$,
\begin{equation*}
\vert\partial^\kappa p(\zeta)\vert
\le C m^{n/2-1+k}\|p\|_{L^2(\mathbb S)}.
\end{equation*}
As
$\|Z_m(\cdot,\eta)\|^2_{L^2(\mathbb S)}=\text{dim}\, H_m\lesssim m^{n-2}$
for $\eta\in\mathbb S$, and $\partial^\kappa Z_m(\cdot,\cdot)$ is
homogeneous of degree $m-k$ in the first variable and degree $m$ in
the second, the result follows.
\end{proof}

\begin{proposition}\label{PEstW}
(i) For $a_j,b_j\ge 0$ $(j=1,2,\dots,J)$, let
\begin{equation}\label{dmis}
d_m=\frac{\Gamma(m+a_1)\cdots\Gamma(m+a_J)}{\Gamma(m+b_1)\cdots\Gamma(m+b_J)},
\end{equation}
and for $x\in\mathbb B$, $y\in\overline{\mathbb B}$, let
\begin{equation*}
W(x,y)=\sum_{m=1}^\infty d_m Z_m(x,y).
\end{equation*}
Given a multi-index $\lambda$, set
$c=(n-1)+\sum_{j=1}^J(a_j-b_j)+\vert\lambda\vert$.
There exists a constant $C$ depending only on $n,a_j,b_j,\lambda$ such that
for all $x\in\mathbb B$, $y\in\overline{\mathbb B}$,
\begin{equation*}
\bigl\vert\partial^\lambda W(x,y)\bigr\vert
\leq C\begin{cases}
\dfrac{1}{[x,y]^{c}},&\text{if $c>0$};\\
1+\log\dfrac{1}{[x,y]},&\text{if $c=0$};\\
1,&\text{if $c<0$},
\end{cases}
\end{equation*}
where differentiation is applied to the first variable.

(ii) The same estimate holds if instead of \eqref{dmis}, $d_m$ has asymptotic
expansion
\begin{equation*}
d_m\approx\frac{\Gamma(m+a_1)\cdots\Gamma(m+a_J)}{\Gamma(m+b_1)\cdots\Gamma(m+b_J)}
\sum_{k=0}^\infty\frac{A_k}{m^k}
\qquad (m\to\infty),
\end{equation*}
where $A_k\in\mathbb R$ and $A_0\neq 0$.
\end{proposition}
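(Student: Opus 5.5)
We reduce everything to the Euclidean harmonic case, where kernels of exactly this form have explicit closed expressions. First treat part (i) with $d_m$ as in \eqref{dmis}. The basic building block is the case $J=1$ with $d_m=\Gamma(m+a)/\Gamma(m+b)$. Recall the generating identity for zonal harmonics,
\begin{equation*}
\sum_{m=0}^\infty \frac{\Gamma(m+\gamma)}{\Gamma(\gamma)\,\Gamma(m+n/2-1)}\cdot(\text{const})\,Z_m(x,y)
\;\text{``}\!=\!\text{''}\; \frac{\text{(polynomial-type factor)}}{[x,y]^{\gamma}}\ldots,
\end{equation*}
more concretely the classical expansion of $1/[x,y]^{n-2}$ (Gegenbauer generating function) and, more generally, of $(1-|x|^2|y|^2)/[x,y]^{n+\beta}$-type kernels, whose coefficients are ratios of Gamma functions. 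Rather than chase closed forms, I would use an integral representation. Write
\begin{equation*}
\frac{\Gamma(m+a)}{\Gamma(m+b)}=\frac{1}{\Gamma(b-a)}\int_0^1 t^{m+a-1}(1-t)^{b-a-1}\,dt \qquad (b>a).
\end{equation*}
Using $Z_m(tx,y)=t^mZ_m(x,y)$, this gives $\sum_m d_mZ_m(x,y)=\frac{1}{\Gamma(b-a)}\int_0^1 t^{a-1}(1-t)^{b-a-1}W_0(tx,y)\,dt$, where $W_0$ is a kernel with coefficients of pure power growth. The order of the steps is then as follows.

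\textbf{Step 1: base kernels.} For base kernels of the form $W_\gamma(x,y)=\sum_m \frac{\Gamma(m+\gamma)}{\Gamma(m+n/2-1)}\ldots Z_m$, use the known closed forms (e.g.\ the harmonic Bergman kernels $R_\beta(x,y)$ of the Euclidean ball, whose coefficients are $\sim\Gamma(m+\beta+n)/\Gamma(m+n/2)\cdot$const, and their known derivative estimates $|\partial^\lambda R_\beta(x,y)|\lesssim [x,y]^{-(n+\beta+|\lambda|)}$). These give the estimate when $c>0$ for a convenient family.

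\textbf{Step 2: shifting by Gamma ratios.} Change the coefficients by Gamma ratios via the Beta-integral above. Differentiating under the integral in $x$ yields a factor $t^{|\lambda|}$, and Lemma \ref{Ltxy} together with Lemma \ref{LInt01} (with $b=b_j-a_j-1$) converts $\int_0^1(1-t)^{b}[tx,y]^{-(1+b+c)}dt$ into exactly the trichotomy $[x,y]^{-c}$, $\log$, $1$. Iterate over $j=1,\dots,J$. When $a_j>b_j$, write $\Gamma(m+a)/\Gamma(m+b)$ as $\Gamma(m+a)/\Gamma(m+a')\cdot\Gamma(m+a')/\Gamma(m+b)$ and absorb the increasing factor into the base kernel choice instead (i.e.\ pick the base kernel with large exponent, then reduce). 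This costs nothing because the base family has arbitrary parameter.

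\textbf{Step 3: the asymptotic case (ii).} Truncate the expansion at order $K$, with $K$ large enough. The main part is a finite combination $\sum_{k<K}A_k m^{-k}\cdot$(Gamma ratio). Each $m^{-k}$ may be replaced by $\Gamma(m)/\Gamma(m+k)$ up to a further asymptotic series, so by induction these are handled by (i) with $c$ decreased by $k$; their bounds are dominated by the $k=0$ bound. The remainder has coefficients $O(m^{\sigma-K})$ with $\sigma=\sum(a_j-b_j)$. Using Lemma \ref{Lderzm} (or its $k=0$ analogue $|Z_m|\lesssim m^{n-2}$), the $\lambda$-derivative of the remainder series is bounded by $\sum_m m^{\sigma-K+n-2+|\lambda|}$, which is finite once $K>c$. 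Finitely many small $m$ (those with $m<|\lambda|$ in Lemma \ref{Lderzm}) are polynomials and hence bounded.

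\textbf{Main obstacle.} The main obstacle is Step 1: producing a reliable base family with exact derivative bounds valid up to $y\in\overline{\mathbb B}$. I would take the Euclidean harmonic Bergman kernels and verify, via their explicit formulas in terms of $[x,y]$ and $\langle x,y\rangle$, that $\partial^\lambda$ costs exactly $[x,y]^{-|\lambda|}$, using $|\nabla_x [x,y]|\le 2$ together with induction.
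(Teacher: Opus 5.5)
Your Step 3 is essentially the paper's argument for (ii). You truncate the expansion at an order $K>c$ and treat the finitely many main terms by (i), after trading $m^{-k}$ for $\Gamma(m)/\Gamma(m+k)$, which lowers $c$ by $k$. The remainder you sum crudely with Lemma \ref{Lderzm}, since $\sigma-K+n-2+|\lambda|=c-1-K<-1$.

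For (i) the paper gives no argument; it quotes Corollary 4.3 of the earlier paper. Your Steps 1--2 are therefore an attempt at a self-contained proof, and there the treatment of increasing ratios fails as stated. The Beta-integral step is fine for decreasing ratios $\Gamma(m+a)/\Gamma(m+b)$ with $b>a$. The weight $t^{a-1}$ is harmless when $a<1$, because the series starts at $m=1$.

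The Beta integral can only lower coefficients, so all growth has to come from the base kernel. The Euclidean weighted harmonic Bergman kernels have coefficients $C\,\Gamma(m+n/2+\beta+1)/\Gamma(m+n/2)$. The exponent $\beta+n$ you wrote would contradict the bound $[x,y]^{-(n+\beta+|\lambda|)}$ you quote unless $n=2$. This family has one free parameter, and its denominator shift is frozen at $n/2$.

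Multiplying by ratios $\Gamma(m+p)/\Gamma(m+q)$ with $q\ge p\ge 0$ cannot produce even $d_m=\Gamma(m+1)/\Gamma(m)=m$:
\begin{itemize}
\item such a product, viewed as a meromorphic function of $m$, does not vanish at $m=0$;
\item a zero there would need a factor with $q=0$, which forces $p=0$, so that factor is $1$;
\item Carlson's theorem upgrades equality on $\mathbb N$ to equality of meromorphic functions, since both sides grow polynomially in a right half-plane.
\end{itemize}
The same obstruction affects $c_m(\alpha)/B^2(m,n-1)$. It contains $\Gamma(m+\alpha+n)\Gamma(m+n-1)/\Gamma(m)^2$, and this is exactly what Theorem \ref{TPartialKernel} feeds into this proposition. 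So the claim that the increasing factor ``costs nothing'' is false. Splitting through an intermediate $a'$ only moves the problem, because $\Gamma(m+a')/\Gamma(m+b)$ is still increasing.

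The gap can be repaired with tools you already have. Here are two ways.

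\emph{Differential operators.} Use $N_xZ_m(x,y)=mZ_m(x,y)$. Write an increasing ratio as $\frac{\Gamma(m+a)}{\Gamma(m+b+L)}\prod_{i=0}^{L-1}(m+b+i)$ with $L=\lceil a-b\rceil$. The polynomial factor is realized by the operator $\prod_{i=0}^{L-1}(N_x+b+i)$. Let $\Phi_s$ denote the three-case bound, and work with the class of kernels satisfying $|\partial^\lambda W|\lesssim\Phi_{c+|\lambda|}$ for all $\lambda$. Then $N_x+b$ maps parameter $c$ to $c+1$, using $[x,y]<2$, and your Beta integral maps $c$ to $c-(b-a)$. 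You can start from $\sum_{m\ge1}Z_m(x,y)=(1-|x|^2|y|^2)[x,y]^{-n}-1$. Its derivative bounds $[x,y]^{-(n-1+|\lambda|)}$ are elementary, via Fa\`a di Bruno applied to the polynomial $[x,y]^2$ with $|\nabla_x[x,y]^2|\le 2[x,y]$. This also disposes of your ``main obstacle'': for integer $\beta$ the weighted kernels are constant multiples of $\prod_{i=0}^{\beta}(N_x+n/2+i)$ applied to this Poisson kernel.

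\emph{Asymptotic matching.} Alternatively, run your Step 3 argument inside (i). Base kernels times decreasing ratios realize every growth rate $m^\sigma$ with a full expansion in powers of $1/m$. So any Gamma product equals a finite combination of such sequences times $\Gamma(m)/\Gamma(m+k)$, plus $O(m^{\sigma-K})$. In other words, prove (ii) directly for this reachable family and obtain (i) as a special case.
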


\begin{proof}
Part (i) is Corollary 4.3 of \cite{U}.
Part (ii) follows from part (i) by a method similar to the proof of
Lemma 5.2(ii) of \cite{U}.
The only difference is that we choose $K>c$ and use Lemma \ref{Lderzm}
instead of \cite[Lemma 5.1]{U}.
\end{proof}

\begin{proposition}\label{PEsth}
(i) For $a_j,b_j\ge 0$ $(j=1,2,\dots,J)$, let
\begin{equation}\label{dmis2}
d_m=\frac{\Gamma(m+a_1)\cdots\Gamma(m+a_J)}{\Gamma(m+b_1)\cdots\Gamma(m+b_J)},
\end{equation}
and for $x\in\mathbb B$, $y\in\overline{\mathbb B}$, let
\begin{equation*}
h(x,y)=\sum_{m=1}^\infty d_m S_m(\vert x\vert)S_m(\vert y\vert) Z_m(x,y).
\end{equation*}
Set $c=(n-1)+\sum_{j=1}^J(a_j-b_j)$.
There exists a constant $C$ depending only on $n,a_j,b_j$ such that for all
$x\in\mathbb B$, $y\in\overline{\mathbb B}$,
\begin{equation*}
\vert h(x,y)\vert\leq C
\begin{cases}
\dfrac{1}{[x,y]^{c}},&\text{if $c>0$};\\
1+\log\dfrac{1}{[x,y]},&\text{if $c=0$};\\
1,&\text{if $c<0$}.
\end{cases}
\end{equation*}

(ii) The same estimate holds if instead of \eqref{dmis2}, $d_m$ has asymptotic
expansion
\begin{equation}\label{dm2asym}
d_m\approx\frac{\Gamma(m+a_1)\cdots\Gamma(m+a_J)}{\Gamma(m+b_1)\cdots\Gamma(m+b_J)}
\sum_{k=0}^\infty\frac{A_k}{m^k}
\qquad (m\to\infty),
\end{equation}
where $A_k\in\mathbb R$ and $A_0\neq 0$.
\end{proposition}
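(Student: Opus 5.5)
We need to estimate $h(x,y)=\sum d_m S_m(|x|)S_m(|y|)Z_m(x,y)$. The plan is to reduce to Proposition \ref{PEstW} by writing the product $S_m(|x|)S_m(|y|)$ as an integral average over $t\in[0,1]$. We first use the Laplace-type integral representation of the hypergeometric function. Since ${}_2F_1(m,1-\frac n2;m+\frac n2;r^2)$ has $c-a=n/2>0$, Euler's integral gives
\begin{equation*}
S_m(r)=\frac{1}{{}_2F_1(m,1-\frac n2;m+\frac n2;1)}\,
\frac{\Gamma(m+\frac n2)}{\Gamma(m)\Gamma(\frac n2)}\int_0^1 u^{m-1}(1-u)^{\frac n2-1}(1-ur^2)^{\frac n2-1}\,du .
\end{equation*}
Here ${}_2F_1(\dots;1)=\Gamma(m+\frac n2)\Gamma(n-1)/(\Gamma(\frac n2)\Gamma(m+n-1))$ by Gauss's formula. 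The factor $u^{m-1}$ can be absorbed by homogeneity: $u^{m}Z_m(x,y)=Z_m(ux,y)$. In this way each factor $S_m(|x|)$ turns into an integral of Gamma ratios times a radial dilation of $x$. The weight $(1-u|x|^2)^{n/2-1}$ is bounded above and below by constants independent of $m$ for $n\ge2$.

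Doing this in both variables gives
\begin{equation*}
h(x,y)=\iint_{[0,1]^2}\frac{\Phi(u,v,|x|,|y|)}{uv}\sum_m \tilde d_m Z_m(ux,vy)\,du\,dv ,
\end{equation*}
with $\tilde d_m=d_m\,\Gamma(m+n-1)^2/\Gamma(m)^2$ up to constants, $\Phi$ bounded, and $\Phi\lesssim(1-u)^{n/2-1}(1-v)^{n/2-1}$. The sum $\tilde W$ is again of the form \eqref{dmis}, now with exponent $c'=c+2(n-2)+\dots$. The $1/(uv)$ singularity is harmless: we split off the first few terms, or use $u^{m-1}$ with $m\ge1$ directly, writing $Z_m(ux,vy)/(uv)$ as a polynomial.

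Proposition \ref{PEstW}(i) then gives $|\tilde W(ux,vy)|\lesssim [ux,vy]^{-c'}$, where the extra Gamma factors $\Gamma(m+n-1)/\Gamma(m)$ raise the exponent by $n-2$ in each variable. The $u$- and $v$-integrals are then handled by Lemma \ref{LInt01}, once for each, with $b=n/2-1$. Each integration lowers the exponent by $b+1=n/2$. We also use $[ux,vy]=[uvx,y]$ from the definition, together with Lemma \ref{Ltxy}.

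The main obstacle is the bookkeeping. The exponent gained from the Gamma factors, $2(n-2)$, is not exactly compensated by the two integrations, which give $2\cdot n/2=n$. So the naive route mismatches, and I would refine it. Instead of raw Euler integrals, I would use the known estimates $S_m(r)\sim$ bounded with $0<S_m\le1$, together with the derivative estimate $|S_m'(r)|\lesssim m\,(1-r)^{?}$, and perform summation by parts in the radial variable.

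Concretely, I would take the alternative cleaner route. Since $S_m(r)=1-\int_r^1S_m'$, write
\begin{equation*}
S_m(|x|)S_m(|y|)Z_m(x,y)=Z_m(x,y)-(\text{correction}).
\end{equation*}
The main term is bounded by Proposition \ref{PEstW} directly with $\lambda=0$, giving exactly exponent $c$. For the correction terms, I would use the hypergeometric expansion $S_m(r)=\sum_j \gamma_{m,j}(1-r^2)^j$ coming from the $1-z$ connection formula. Its coefficients are Gamma ratios in $m$ of size $m^{j}$, valid because $c-a-b=n-1>0$. Each term $(1-|x|^2)^j\sum_m\gamma_{m,j}d_mZ_m$ is estimated by Proposition \ref{PEstW}(ii) with exponent $c+j$. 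The factor $(1-|x|^2)^j\lesssim[x,y]^j$ then restores exponent $c$.

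The obstacle is convergence and uniformity of the $j$-series, and the logarithmic terms that appear when $n$ is even. I expect to truncate at $j\le J$ with a remainder $O(m^{J}(1-r^2)^J\cdot\ldots)$, and handle that remainder crudely via Lemma \ref{Lderzm}-type bounds. Part (ii) follows from (i) exactly as in Proposition \ref{PEstW}(ii): subtract finitely many terms of the asymptotic expansion, each of which is handled by part (i). The remainder has summable coefficients $O(m^{c-n+1-K})$, bounded using $|Z_m|\lesssim m^{n-2}$ and $|S_m|\le1$.
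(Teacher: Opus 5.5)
Your committed route has a genuine gap. The route you abandoned is the right one: it is the argument behind the paper's part (i), which is cited, and the same computation appears in the proof of Theorem \ref{TPartialKernel}. The ``mismatch'' comes from two bookkeeping errors.

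\begin{itemize}
\item First, $1/B(m,n-1)=\Gamma(m+n-1)/(\Gamma(m)\Gamma(n-1))\sim m^{n-1}$. So the new kernel has exponent $c+2(n-1)$, not $c+2(n-2)$.
\item Second, the weights $(1-u|x|^2)^{n/2-1}$ and $(1-v|y|^2)^{n/2-1}$ are not bounded below for $n\ge 3$, and they are exactly what closes the count. Since $[ux,vy]\ge 1-u|x|$ and $1-u|x|^2\le 2(1-u|x|)$, you have $1-u|x|^2\le 2[ux,vy]$. Likewise $1-v|y|^2\le 2[ux,vy]$.
\end{itemize}

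Keeping these factors, the integrand is bounded by $(1-u)^{n/2-1}(1-v)^{n/2-1}[uvx,y]^{-(c+n)}$. Now apply Lemma \ref{LInt01} twice with $b=n/2-1$: first in $v$ at the point $ux$, then in $u$. Each application lowers the exponent by $n/2$, landing exactly on the three-case bound with exponent $c$. The balance is $2(n-1)=2\cdot\frac n2+2(\frac n2-1)$, so there is no mismatch. The factor $1/(uv)$ is harmless, since the series is $O(1)$ where $u\le 1/2$ or $v\le 1/2$.

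The connection-formula route does not close, for the following reasons.

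\begin{itemize}
\item Because $c-a-b=n-1$ is an integer, the expansion of ${}_2F_1(m,1-\frac n2;m+\frac n2;z)$ about $z=1$ contains terms $(1-z)^{n-1+k}\log(1-z)$ whenever $1-\frac n2$ is not a nonpositive integer. That is the case for odd $n$, not even $n$. Their coefficients involve $\psi(m+\cdot)$ and are not of the form \eqref{dm2asym}, so Proposition \ref{PEstW} does not apply to them.
\item The series in $k$ is not uniform in $m$. Its partial sums grow like $(m(1-r^2))^K$, while $S_m$ itself is at most $S_m(0)\sim m^{n/2-1}$. So the truncation remainder is not small when $m(1-r^2)\gtrsim 1$.
\item That remainder cannot be bounded crudely. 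Any estimate using only $|Z_m(x,y)|\lesssim m^{n-2}|x|^m|y|^m$ depends only on $|x|,|y|$, and here it is of order $(1-|x||y|)^{-c}$. This is not $\lesssim [x,y]^{-c}$: take $y=-x$ with $|x|\to 1$, where $[x,y]\ge 1$.
\end{itemize}

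The whole content of the estimate is the off-diagonal decay carried by $Z_m$, and it must survive every step.

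For even $n$ your idea does work. Then $S_m(r)={}_2F_1(1-\frac n2,m;2-n;1-r^2)$ is a polynomial of degree $\frac n2-1$ in $1-r^2$, with coefficients $(m)_j(1-\frac n2)_j/(j!\,(2-n)_j)$. Proposition \ref{PEstW}(i) together with $1-|x|^2,\,1-|y|^2\le 2[x,y]$ then finishes. Odd $n$, e.g.\ $n=3$, is not covered.

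Finally, $0<S_m\le 1$ is false: $S_m$ decreases from $S_m(0)\sim m^{n/2-1}$ to $S_m(1)=1$. In part (ii) this means the remainder needs $K>c+n-2$, as in the paper, rather than $K>c$. This is harmless since $K$ is arbitrary, so your part (ii) is fine once (i) is established.
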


\begin{proof}
Part (i) is Theorem 4.4 of \cite{U}.
Part (ii) follows from part (i) in the same way as in the proof of
Theorem 1.2(a) of \cite{U}.
The only difference is that we choose $K>c+n-2$.
\end{proof}

Next, we estimate $D^t_s\mathcal R_\alpha(x,y)$.
Note that when $t=0$, $D^0_s=\textup{Id}.$ and the theorem below gives an
estimate of $\mathcal R_\alpha$.

\begin{theorem}\label{TDstKernel}
Let $\alpha,s,t\in\mathbb R$.
There exists a constant $C=C(n,\alpha,s,t)$ such that for all $x,y\in\mathbb B$,
\begin{equation*}
\vert D^t_s\mathcal R_\alpha(x,y)\vert\le C
\begin{cases}
\dfrac{1}{[x,y]^{n+\alpha+t}},&\text{if $n+\alpha+t>0$};\\
1+\log\dfrac{1}{[x,y]},&\text{if $n+\alpha+t=0$};\\
1,&\text{if $n+\alpha+t<0$}.
\end{cases}
\end{equation*}
\end{theorem}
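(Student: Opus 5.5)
The plan is to reduce the estimate to Proposition \ref{PEsth}(ii) by writing $D^t_s\mathcal R_\alpha$ as a series of the form $h(x,y)$ plus a bounded term. By definition of $D^t_s$ applied in the $x$ variable to the series of $\mathcal R_\alpha(\cdot,y)$, whose $m$-th component is $c_m(\alpha)S_m(\vert y\vert)Z_m(\cdot,y)\in H_m$, we get
\begin{equation*}
D^t_s\mathcal R_\alpha(x,y)=\frac{c_0(s+t)c_0(\alpha)}{c_0(s)}
+\sum_{m=1}^\infty d_m S_m(\vert x\vert)S_m(\vert y\vert)Z_m(x,y),
\qquad d_m=\frac{c_m(s+t)\,c_m(\alpha)}{c_m(s)}.
\end{equation*}
The constant term is harmless: it is bounded by a constant, and each of the three right-hand sides in the claimed estimate is bounded below by a positive constant, since $[x,y]\le 2$. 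So it suffices to bound the sum.

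The key step is to show that $d_m$ has an asymptotic expansion of the form \eqref{dm2asym}, with $c=(n-1)+\sum(a_j-b_j)=n+\alpha+t$. Each factor $c_m(\beta)$ has such an expansion.
\begin{itemize}
\item[(1)] For $\beta>-1$, \eqref{cmbig} gives $c_m(\beta)\approx\frac{\Gamma(m+\beta+n)}{\Gamma(m+n-1)}\sum D_k m^{-k}$ with $D_0>0$. Here $\beta+n>n-1\ge 1$, so the Gamma parameters are nonnegative.
\item[(2)] For $\beta\le -1$, the formula \eqref{cmsmall} gives $c_m(\beta)=\Gamma(m)/\Gamma(m-\beta-1)$ exactly, with parameters $0$ and $-\beta-1\ge 0$.
\end{itemize}
The quotient $1/c_m(s)$ swaps numerator and denominator Gamma's, and the reciprocal of a series $\sum D_k m^{-k}$ with $D_0\neq0$ is again an asymptotic series with nonzero leading coefficient. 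Products of such series are of the same type. Hence $d_m$ has the form \eqref{dm2asym} with all $a_j,b_j\ge 0$ and $A_0\neq 0$.

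Next I check the exponent. In each case the contribution of $c_m(\beta)$ to $\sum(a_j-b_j)$ is $\beta+1$: it is $(\beta+n)-(n-1)$ in the first case and $0-(-\beta-1)$ in the second. Therefore
\begin{equation*}
\sum_j(a_j-b_j)=(s+t+1)+(\alpha+1)-(s+1)=\alpha+t+1,
\end{equation*}
and $c=n+\alpha+t$, exactly as required. Proposition \ref{PEsth}(ii), applied with $y\in\mathbb B\subset\overline{\mathbb B}$, then yields the three-case bound.

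The main obstacle is the bookkeeping that puts $d_m$ genuinely in the hypothesis class of Proposition \ref{PEsth}(ii). There are two points to handle. First, when $\beta\le -1$ the exact Gamma ratio must be regarded as an expansion with $A_0=1$ and $A_k=0$ for $k\ge 1$. Second, the product/quotient of the three expansions must be taken term by term to produce a legitimate asymptotic series in $1/m$; this is standard, since each truncation error is $O(m^{-K})$.

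Finally, the constant depends only on $n,\alpha,s,t$, because the Gamma parameters and the coefficients $A_k$ are determined by these.
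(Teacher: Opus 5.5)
Your proposal is correct and is essentially the paper's proof. Like the paper, you write $d_m=c_m(s+t)c_m(\alpha)/c_m(s)$, check in each case that it has the form required by Proposition \ref{PEsth}, compute $c=(n-1)+(s+t+1)-(s+1)+(\alpha+1)=n+\alpha+t$, and apply that proposition; your separate treatment of the constant $m=0$ term (bounded, while each right-hand side is bounded below because $[x,y]\le 2$) fills in a detail the paper leaves implicit.
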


\begin{proof}
We have
\begin{equation*}
D^t_s\mathcal R_\alpha(x,y)=\sum_{m=0}^\infty
\frac{c_m(s+t)}{c_m(s)}c_m(\alpha)
S_m(\vert x\vert)S_m(\vert y\vert)Z_m(x,y),
\end{equation*}
where $c_m$ is determined by either \eqref{cmbig} or \eqref{cmsmall}
depending on whether $\alpha,s,s+t$ are $>-1$ or $\le -1$.
So, $d_m=c_m(s+t)c_m(\alpha)/c_m(s)$ is of the form \eqref{dmis2} or \eqref{dm2asym}.
In all possible $8$ cases, the parameter $c$ in Proposition
\ref{PEsth} is $c=(n-1)+(s+t+1)+(-s-1)+(\alpha+1)=n+\alpha+t$
and the result follows.
\end{proof}

We next estimate partial derivatives of the reproducing kernels.
For simplicity, we consider only the case $\alpha>-1$ which is
sufficient for our purposes.
The estimate takes a different form when the order of the derivative
exceeds $n-2$.

\begin{theorem}\label{TPartialKernel}
Let $\alpha>-1$ and $\kappa$ be a multi-index.
There exists a constant $C=C(n,\alpha,\kappa)$ such that for all
$x,y\in\mathbb B$,
\begin{enumerate}
\item[(i)] If $\vert\kappa\vert\le n-2$, then
\begin{equation*}
\vert\partial^\kappa\mathcal R_\alpha(x,y)\vert
\le \frac{C}{[x,y]^{n+\alpha+\vert\kappa\vert}}.
\end{equation*}
\item[(ii)] If $\vert\kappa\vert=n-1$, then
\begin{enumerate}
\item[(a)]
$\displaystyle
\bigl\vert\partial^\kappa\mathcal R_\alpha(x,y)\bigr\vert
\le C\Bigl(1+\log\frac{1}{1-\vert x\vert^2}\Bigr)
\frac{1}{[x,y]^{n+\alpha+n-1}},$
\item[(b)]
for every $0<\varepsilon<1$ (with $C$ depending also on $\varepsilon$),
\begin{equation*}
\bigl\vert\partial^\kappa\mathcal R_\alpha(x,y)\bigr\vert
\le C\frac{1}{(1-\vert x\vert^2)^{\varepsilon}}
\frac{1}{[x,y]^{n+\alpha+n-1-\varepsilon}}.
\end{equation*}
\end{enumerate}
\item[(iii)] If $\vert\kappa\vert\ge n$, then
\begin{equation*}
\bigl\vert\partial^\kappa\mathcal R_\alpha(x,y)\bigr\vert
\le C\frac{1}{(1-\vert x\vert^2)^{\vert\kappa\vert-(n-1)}}
\frac{1}{[x,y]^{n+\alpha+n-1}}.
\end{equation*}
\end{enumerate}
Here, differentiation is with respect to $x$.
\end{theorem}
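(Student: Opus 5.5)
We will estimate $\partial^\kappa\mathcal R_\alpha(x,y)$ by differentiating the series \eqref{seriesRalpha} term by term in $x$ and splitting the product $S_m(\vert x\vert)Z_m(x,y)$ with the Leibniz rule. Write $S_m(\vert x\vert)=\sum_j s_{m,j}\vert x\vert^{2j}$ with $s_{m,j}$ the (normalized) hypergeometric coefficients. A cleaner route is to view $S_m(\vert x\vert)Z_m(x,y)$ as the function $x\mapsto\sum_j s_{m,j}\vert x\vert^{2j}Z_m(x,y)$. Then $\mathcal R_\alpha(x,y)=\sum_{j\ge0}\vert x\vert^{2j}W_j(x,y)$, where $W_j(x,y)=\sum_m c_m(\alpha)s_{m,j}S_m(\vert y\vert)Z_m(x,y)$.

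Since $s_{m,j}$ is a ratio of Gamma functions in $m$ (for fixed $j$, of type $\frac{(m)_j(1-n/2)_j}{(m+n/2)_j j!}$ divided by ${}_2F_1(\cdot;1)$), the $y$-dependence through $S_m(\vert y\vert)$ is awkward. I would instead first fix $y$ and use the known structure $S_m(r)=\frac{\Gamma(m+n/2)\Gamma(n-1)}{\Gamma(n/2)\Gamma(m+n-1)}\,{}_2F_1(\dots;r^2)$ together with the Euler integral representation
\[
{}_2F_1(m,1-\tfrac n2;m+\tfrac n2;r^2)=C_m\int_0^1 t^{m-1}(1-t)^{n/2-1}(1-tr^2)^{n/2-1}\,dt.
\]
This expresses $S_m(\vert x\vert)Z_m(x,y)$ as an integral over $t$ of $Z_m(\sqrt t\,x,y)$ weighted by the smooth factor $(1-t\vert x\vert^2)^{n/2-1}$ and by $t^{-1}$ times Gamma ratios in $m$. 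After summation, $\mathcal R_\alpha(x,y)$ becomes an integral over $t\in[0,1]$ of $(1-t)^{n/2-1}(1-t\vert x\vert^2)^{n/2-1}$ times a kernel $W_t(x,y)=\sum_m d_m t^{m/2}\cdots Z_m(x,y)$ with $d_m$ of the form \eqref{dm2asym}. (The factor $S_m(\vert y\vert)$ would be handled the same way, or absorbed by applying Proposition \ref{PEsth}-type reasoning in $y$.)

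Now differentiate under the integral with Leibniz. A derivative falling on $(1-t\vert x\vert^2)^{n/2-1}$ costs a factor $(1-t\vert x\vert^2)^{-1}$ each time, and this factor is harmless until more than $n/2-1$ of them, combined with the $(1-t)^{n/2-1}$ weight, become non-integrable. Derivatives falling on $Z_m$ give, via Proposition \ref{PEstW}, a bound $[\sqrt t x,y]^{-(c_0+\ell)}$ with $c_0$ the base exponent. Then Lemma \ref{Ltxy} replaces $[\sqrt t x,y]$ by $[x,y]$ up to a factor $2$, and Lemma \ref{LInt01} performs the $t$-integral, gaining or losing powers depending on the exponent $1+b+c$.

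The bookkeeping should give the following. As long as the total order is $\le n-2$, the $t$-integral behaves like case $c>0$ of Lemma \ref{LInt01} and yields $[x,y]^{-(n+\alpha+\vert\kappa\vert)}$, which is (i). At order $n-1$ the weight $(1-t)^{b}$ effectively reaches $b=-1$ after using $1-t\vert x\vert^2\ge1-t$ and $\ge1-\vert x\vert^2$. Splitting off the factor $(1-t\vert x\vert^2)^{-1}$ and integrating $\int_0^1(1-t\vert x\vert^2)^{-1}dt\lesssim1+\log\frac1{1-\vert x\vert^2}$ gives (ii)(a). Alternatively, bounding $(1-t\vert x\vert^2)^{-1}\le(1-\vert x\vert^2)^{-\varepsilon}(1-t)^{-(1-\varepsilon)}$ and applying Lemma \ref{LInt01} with $b=\varepsilon-1$ gives (ii)(b), losing $\varepsilon$ in the $[x,y]$ exponent. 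For order $\ge n$, each excess derivative contributes a full $(1-\vert x\vert^2)^{-1}$ while the $[x,y]$ exponent saturates at $n+\alpha+n-1$, which is (iii).

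The main obstacle is the precise bookkeeping of how the $|\kappa|$ derivatives distribute between the polynomial factor and the kernel. In particular, one must show that derivatives hitting $Z_m$ beyond the threshold can be traded for powers of $(1-t\vert x\vert^2)^{-1}$ without loss, keeping the $[x,y]$ exponent capped at $2n-1+\alpha$. I expect to need to integrate by parts in $t$, or use the identity relating $N$ acting on $t^{m}$ to $t\,\partial_t$, to move derivatives onto the weight.
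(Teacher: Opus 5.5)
Your skeleton matches the paper's proof. You use the Leibniz split into $\partial^\lambda S_m(\vert x\vert)\,\partial^\mu Z_m(x,y)$, the Euler integral representation of $S_m$, and Proposition~\ref{PEstW}(ii) for the resulting Gamma-type series. You then use Lemma~\ref{LInt01} in case (i), and Lemma~\ref{Ltxy} plus an elementary $t$-integral in cases (ii)--(iii). Your route to (ii)(b), via $(1-t\vert x\vert^2)^{-1}\le(1-\vert x\vert^2)^{-\varepsilon}(1-t)^{\varepsilon-1}$ and Lemma~\ref{LInt01} with $b=\varepsilon-1$, is correct and a bit slicker than the paper's H\"older step. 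However, the proposal leaves open exactly the step that makes the exponents come out right, as you acknowledge yourself. It never shows how the extra factor $[tx,y]^{-\vert\mu\vert}$ produced by derivatives landing on $Z_m$ is turned into powers of $(1-\vert x\vert^2t)^{-1}$, which is what caps the $[x,y]$-exponent at $2n-1+\alpha$. Your suggested remedies are undeveloped. Integration by parts in $t$ or $N\leftrightarrow t\partial_t$ only reproduces the radial derivative, so it does not obviously handle a general $\partial^\mu$. As written, (ii) and (iii) are therefore not proved, and the bookkeeping claimed for (i) is not justified either.

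The missing ingredient is the elementary inequality \eqref{ineq}, $1-t\le 1-\vert x\vert^2t\le 2[tx,y]$, which holds since $[tx,y]\ge 1-t\vert x\vert$. You also need its analogue $1-\vert y\vert^2\tau\le 2[tx,\tau y]$, and the inequality is used in both directions.

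1. To handle $S_m(\vert y\vert)$ you must also write it as a $\tau$-integral. The series then has coefficients $c_m(\alpha)/B(m,n-1)^2$. Writing $t^{m-1}\tau^{m-1}\partial^\mu Z_m(x,y)=t^{\vert\mu\vert}\partial^\mu Z_m(tx,\tau y)/(t\tau)$, Proposition~\ref{PEstW} gives the exponent $n+\alpha+2(n-1)+\vert\mu\vert$.
2. In Lemma~\ref{LInt01} the weight $(1-\tau)^{n/2-1}$ absorbs only $n/2$ of the surplus $n-1$. The remaining $n/2-1$ powers are absorbed by $(1-\vert y\vert^2\tau)^{n/2-1}\le(2[tx,\tau y])^{n/2-1}$. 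This leaves $(1-t)^{n/2-1}(1-\vert x\vert^2t)^{n/2-1-\vert\lambda\vert}[tx,y]^{-(2n-1+\alpha+\vert\mu\vert)}$.
3. Now apply $(1-t)^{n/2-1}\le(1-\vert x\vert^2t)^{n/2-1}$ and $[tx,y]^{-\vert\mu\vert}\le 2^{\vert\mu\vert}(1-\vert x\vert^2t)^{-\vert\mu\vert}$. Every Leibniz term collapses to $(1-\vert x\vert^2t)^{n-2-\vert\kappa\vert}[tx,y]^{-(2n-1+\alpha)}$, which is exactly the cap you were looking for.
4. For (i), use \eqref{ineq} once more on the nonnegative power $n-2-\vert\kappa\vert$, then Lemma~\ref{LInt01} with $b=0$. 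Without this step the estimate does not close: for $\lambda=0$ and $n\ge 3$, the weight $(1-t)^{n/2-1}$ again absorbs only $n/2$ of $n-1$ surplus powers.
5. For (ii) and (iii), apply Lemma~\ref{Ltxy} as you describe.

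There are also two small slips. First, $t^{m-1}Z_m(x,y)=t^{-1}Z_m(tx,y)$; it is not an expression in $Z_m(\sqrt t\,x,y)$. Second, in case (i) Lemma~\ref{Ltxy} must not be applied before Lemma~\ref{LInt01}, because the $t$-integral gains powers only while $[tx,y]$ is kept inside.
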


\begin{proof}
The proof follows the proof of Theorem 1.2(b) of \cite{U} and we will be
brief about some of the details.
To estimate $\partial^\kappa\mathcal R_\alpha(x,y)$, it suffices to estimate
\begin{equation*}
G(x,y):=\sum_{m=1}^\infty
c_m(\alpha)\partial^\lambda S_m(\vert x\vert)S_m(\vert y\vert)
\partial^\mu Z_m(x,y),
\end{equation*}
where $\lambda+\mu=\kappa$.
Since (\cite[Eqn. (22)]{U})
\begin{equation*}
S_m(\vert x\vert)
=\frac{1}{B(m,n-1)}
\int_0^1 t^{m-1}(1-t)^{\frac{n}{2}-1}(1-\vert x\vert^2 t)^{\frac{n}{2}-1}\,dt,
\end{equation*}
we have
\begin{equation*}
\partial^\lambda S_m(\vert x\vert)
=\frac{1}{B(m,n-1)}
\int_0^1 t^{m-1}(1-t)^{\frac{n}{2}-1}P_\lambda(x,t)
(1-\vert x\vert^2 t)^{\frac{n}{2}-1-\vert\lambda\vert}\,dt,
\end{equation*}
where $P_\lambda$ is a polynomial.
Thus
\begin{align*}
G(x,y)=\int_0^1\int_0^1
&(1-t)^{\frac{n}{2}-1}P_\lambda(x,t)
(1-\vert x\vert^2t)^{\frac{n}{2}-1-\vert\lambda\vert}
(1-\tau)^{\frac{n}{2}-1}(1-\vert y\vert^2 \tau)^{\frac{n}{2}-1}\\
&\sum_{m=1}^\infty\frac{c_m(\alpha)}{B^2(m,n-1)}
t^{m-1}\tau^{m-1}\partial^\mu Z_m(x,y)\,d\tau dt.
\end{align*}
Since $\partial^\mu Z_m(x,y)$ is homogeneous of degree $m-\vert\mu\vert$ in $x$
and of degree $m$ in $y$, writing
$t^{m-1}\tau^{m-1}\partial^\mu Z_m(x,y)
=t^{\vert\mu\vert}\partial^\mu Z_m(tx,\tau y)/t\tau$, and estimating the above
series with Proposition \ref{PEstW} using \eqref{cmbig}, we obtain
\begin{equation*}
\vert G(x,y)\vert
\lesssim \int_0^1\int_0^1
\frac{(1-t)^{\frac{n}{2}-1}(1-\vert x\vert^2t)^{\frac{n}{2}-1-\vert\lambda\vert}
(1-\tau)^{\frac{n}{2}-1}(1-\vert y\vert^2 \tau)^{\frac{n}{2}-1}}
{[tx,\tau y]^{n+\alpha+2(n-1)+\vert\mu\vert}}\,d\tau dt,
\end{equation*}
where we also use $\vert P_\lambda\vert\lesssim 1$, and delete the term $1/t\tau$
which can be justified as done in the proof of \cite[Theorem 4.4]{U}.
Using the inequality $1-\vert y\vert^2\tau\le 2[tx,\tau y]$ (\cite[Eqn.~(36)]{U})
and then integrating
with respect to $\tau$, Lemma \ref{LInt01} yields
\begin{equation*}
\vert G(x,y)\vert
\lesssim \int_0^1
\frac{(1-t)^{\frac{n}{2}-1}(1-\vert x\vert^2t)^{\frac{n}{2}-1-\vert\lambda\vert}}
{[tx,y]^{n+\alpha+n-1+\vert\mu\vert}}\,dt.
\end{equation*}
Finally, using the inequality (\cite[Eqn.~(35)]{U} with $\tau=1$)
\begin{equation}\label{ineq}
1-t\le 1-\vert x\vert^2 t\le 2[tx,y]
\end{equation}
and $\vert\lambda\vert+\vert\mu\vert=\vert\kappa\vert$, we obtain
\begin{equation*}
\vert G(x,y)\vert
\lesssim \int_0^1
\frac{(1-\vert x\vert^2t)^{n-2-\vert\kappa\vert}}
{[tx,y]^{n+\alpha+n-1}}\,dt.
\end{equation*}

We now consider the three cases.

(i) If $\vert\kappa\vert\le n-2$, then using the second inequality
in \eqref{ineq}, and then Lemma \ref{LInt01} shows
\begin{equation*}
\vert G(x,y)\vert
\lesssim\frac{1}{[x,y]^{n+\alpha+\vert\kappa\vert}}.
\end{equation*}

(ii) If $\vert\kappa\vert=n-1$, then by Lemma \ref{Ltxy},
\begin{align}
\vert G(x,y)\vert
&\lesssim \int_0^1\frac{dt}{(1-\vert x\vert^2 t)[tx,y]^{n+\alpha+n-1}}
\lesssim \frac{1}{[x,y]^{n+\alpha+n-1}}\int_0^1\frac{dt}{1-\vert x\vert^2 t}
\label{partii}\\
&\lesssim \Bigl(1+\log\frac{1}{1-\vert x\vert^2}\Bigr)\frac{1}{[x,y]^{n+\alpha+n-1}}.
\notag
\end{align}
This proves part (a).
Alternatively, for $0<\varepsilon<1$, let $p=1/\varepsilon$ and apply
H\"{o}lder's inequality to the first integral in \eqref{partii}.
This gives
\begin{align*}
\vert G(x,y)\vert
&\lesssim \biggl(\int_0^1\frac{dt}{(1-\vert x\vert^2 t)^{p'}}\biggr)^{1/p'}
\biggl(\int_0^1\frac{dt}{[tx,y]^{(n+\alpha+n-1)p}}\biggr)^{1/p}\\
&\lesssim \frac{1}{(1-\vert x\vert^2)^{1-1/p'}}
\frac{1}{[x,y]^{n+\alpha+n-1-1/p}},
\end{align*}
where in the last inequality we use Lemma \ref{LInt01}.
This proves part (b).

(iii) If $\vert\kappa\vert\ge n$, then applying Lemma \ref{Ltxy} shows
\begin{align*}
\vert G(x,y)\vert
&\lesssim \frac{1}{[x,y]^{n+\alpha+n-1}}
\int_0^1 (1-\vert x\vert^2 t)^{n-2-\vert\kappa\vert}\,dt\\
&\lesssim \frac{1}{(1-\vert x\vert^2)^{\vert\kappa\vert-(n-1)}}
\frac{1}{[x,y]^{n+\alpha+n-1}},
\end{align*}
and this finishes the proof.
\end{proof}

We next consider normal derivatives.
Because $\vert N^k f(x)\vert
\le C\sum_{1\le \vert\kappa\vert\le k}\vert\partial^\kappa f(x)\vert$
for $x\in\mathbb B$, and $[x,y]\le 2$ for $x,y\in\mathbb B$, the
next theorem follows immediately from Theorem \ref{TPartialKernel}.

\begin{theorem}\label{TNormalKernel}
Let $\alpha>-1$ and $k\ge 1$.
There exists a constant $C=C(n,\alpha,k)$ such that for all
$x,y\in\mathbb B$,
\begin{enumerate}
\item[(i)] If $k\le n-2$, then
$\displaystyle \vert N^k \mathcal R_\alpha(x,y)\vert
\le \frac{C}{[x,y]^{n+\alpha+k}}$.
\item[(ii)] If $k=n-1$, then
\begin{enumerate}
\item[(a)]
$\displaystyle
\bigl\vert N^k \mathcal R_\alpha(x,y)\bigr\vert
\le C\Bigl(1+\log\frac{1}{1-\vert x\vert^2}\Bigr)
\frac{1}{[x,y]^{n+\alpha+n-1}},$
\item[(b)]
for every $0<\varepsilon<1$ (with $C$ depending also on $\varepsilon$),
\begin{equation*}
\bigl\vert N^k \mathcal R_\alpha(x,y)\bigr\vert
\le C\frac{1}{(1-\vert x\vert^2)^{\varepsilon}}
\frac{1}{[x,y]^{n+\alpha+n-1-\varepsilon}}.
\end{equation*}
\end{enumerate}
\item[(iii)] If $k\ge n$, then
$\displaystyle \bigl\vert N^k \mathcal R_\alpha(x,y)\bigr\vert
\le C\frac{1}{(1-\vert x\vert^2)^{k-(n-1)}}
\frac{1}{[x,y]^{n+\alpha+n-1}}$.
\end{enumerate}
Here, differentiation is with respect to $x$.
\end{theorem}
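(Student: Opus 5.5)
The plan is to reduce the estimate for $N^k\mathcal R_\alpha$ to the partial-derivative estimates of Theorem \ref{TPartialKernel}, exactly as the sentence preceding the statement indicates.

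\emph{Step 1 (expanding $N^k$).} Since $N=\sum_i x_i\,\partial/\partial x_i$ has polynomial coefficients, induction on $k$ gives
\begin{equation*}
N^k f(x)=\sum_{1\le\vert\kappa\vert\le k} a_{\kappa}\, x^\kappa\,\partial^\kappa f(x)
\end{equation*}
with constants $a_\kappa$ (Stirling-number type coefficients). Because $\vert x^\kappa\vert\le1$ on $\mathbb B$, it follows that
\begin{equation*}
\vert N^k f(x)\vert\le C\sum_{1\le\vert\kappa\vert\le k}\vert\partial^\kappa f(x)\vert .
\end{equation*}
We apply this with $f=\mathcal R_\alpha(\cdot,y)$.

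\emph{Step 2 (bounding each term).} We bound each $\partial^\kappa\mathcal R_\alpha$ by Theorem \ref{TPartialKernel} and check that every lower-order bound is dominated by the bound for order $k$. We use two elementary facts:
\begin{equation*}
[x,y]\le 2 \quad\text{and}\quad 1-\vert x\vert^2\le 2[x,y],
\end{equation*}
the second from \eqref{ineq} with $t=1$.

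\emph{Case (i), $k\le n-2$.} Every $\vert\kappa\vert=j\le k$ falls under part (i) of Theorem \ref{TPartialKernel}. Since $[x,y]\le2$, we have $[x,y]^{-(n+\alpha+j)}\lesssim[x,y]^{-(n+\alpha+k)}$.

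\emph{Case (ii), $k=n-1$.}
\begin{itemize}
\item Terms with $j\le n-2$ are bounded by $[x,y]^{-(n+\alpha+n-1)}$ as in case (i). This is at most both right-hand sides, since $1+\log\frac{1}{1-\vert x\vert^2}\ge1$.
\item For (b), we also use $(1-\vert x\vert^2)^{-\varepsilon}[x,y]^{\varepsilon}\ge 2^{-\varepsilon}$, which follows from $1-\vert x\vert^2\le2[x,y]$.
\item The term with $j=n-1$ is exactly part (ii) of Theorem \ref{TPartialKernel}.
\end{itemize}

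\emph{Case (iii), $k\ge n$.}
\begin{itemize}
\item Terms with $j\le n-2$ give $[x,y]^{-(n+\alpha+j)}\lesssim[x,y]^{-(2n-1+\alpha)}$. This is dominated by the target, since $(1-\vert x\vert^2)^{-(k-n+1)}\ge1$.
\item For $j=n-1$ we use part (ii)(b) with, say, $\varepsilon=1/2$. The extra factor $[x,y]^{\varepsilon}\le 2^\varepsilon$ is harmless, and $(1-\vert x\vert^2)^{-\varepsilon}\le(1-\vert x\vert^2)^{-(k-n+1)}$.
\item For $n\le j\le k$, part (iii) gives the factor $(1-\vert x\vert^2)^{-(j-n+1)}\le(1-\vert x\vert^2)^{-(k-n+1)}$.
\end{itemize}

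The only mildly delicate point is this bookkeeping of the lower-order terms: in case (iii) the logarithmic or $\varepsilon$-loss coming from $j=n-1$ must be absorbed. That absorption works because $k-n+1\ge1>\varepsilon$.
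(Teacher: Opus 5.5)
Your proposal is correct and is the paper's argument: the paper also bounds $\vert N^k f\vert$ by $C\sum_{1\le\vert\kappa\vert\le k}\vert\partial^\kappa f\vert$ and reads off each case from Theorem \ref{TPartialKernel} using $[x,y]\le 2$, with your term-by-term bookkeeping left implicit. Your use of $1-\vert x\vert^2\le 2[x,y]$ in case (ii)(b) is valid but unnecessary: for $j\le n-2$ the exponent $n-1-j-\varepsilon$ is positive, so $[x,y]\le 2$ and $(1-\vert x\vert^2)^{-\varepsilon}\ge 1$ already give the comparison.
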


\section{Characterizations in terms of Tangential Derivatives and the operators $D^t_s$}

In this section, we first consider the Bergman case and prove
(a)$\Leftrightarrow$(b)$\Leftrightarrow$(e) of Theorem \ref{TBergman}.
We then extend these equivalences to $\alpha\in\mathbb R$ and prove
Theorem \ref{TBesov}.

We begin with the operators $D^t_s$.
The following lemma allows us to push the operator $D^t_s$ into integrals
involving reproducing kernels.
It can be proved in the same way as \cite[Lemma 3.3]{U3} using the
uniform convergence of the series expansion of $\mathcal R_\alpha$.
We omit the details.

\begin{lemma}\label{LPushDst}
Let $\alpha>-1$, $s,t\in\mathbb R$ and $f\in L^1_\alpha(\mathbb B)$.
Then
\begin{equation*}
D^t_s \int_{\mathbb B}\mathcal R_\alpha(x,y)f(y)\,d\nu_\alpha(y)
=\int_{\mathbb B}D^t_s \mathcal R_\alpha(x,y)f(y)\,d\nu_\alpha(y).
\end{equation*}
\end{lemma}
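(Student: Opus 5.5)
The natural route is to write $\int_{\mathbb B}\mathcal R_\alpha(x,y)f(y)\,d\nu_\alpha(y)$ as the limit of integrals of partial sums of the series \eqref{seriesRalpha}. The left side then becomes a termwise application of the coefficient multiplier $D^t_s$, and we must justify exchanging $D^t_s$ with both the integral and the infinite sum.

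\textbf{Step 1: expansion of $F$.} Fix $f\in L^1_\alpha$ and set $F(x)=\int_{\mathbb B}\mathcal R_\alpha(x,y)f(y)\,d\nu_\alpha(y)$. Let $K=\overline{r\mathbb B}$ be a compact set. The series \eqref{seriesRalpha} converges absolutely and uniformly on $K\times\mathbb B$, and $f\,d\nu_\alpha$ is a finite measure, so we may integrate term by term for $x\in K$:
\begin{equation*}
F(x)=\sum_{m=0}^\infty S_m(\vert x\vert)F_m(x),\qquad
F_m(x)=c_m(\alpha)\int_{\mathbb B}S_m(\vert y\vert)Z_m(x,y)f(y)\,d\nu_\alpha(y).
\end{equation*}
Each $F_m$ lies in $H_m$, since $Z_m(\cdot,y)\in H_m$ and integrating over $y$ preserves homogeneity and harmonicity. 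The series converges uniformly on compact sets, so $F\in\mathcal H(\mathbb B)$. By uniqueness of the expansion \eqref{fseries}, this is the series expansion of $F$. Consequently
\begin{equation*}
D^t_sF(x)=\sum_{m=0}^\infty \frac{c_m(s+t)}{c_m(s)}c_m(\alpha)\int_{\mathbb B}S_m(\vert x\vert)S_m(\vert y\vert)Z_m(x,y)f(y)\,d\nu_\alpha(y).
\end{equation*}

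\textbf{Step 2: the right-hand side.} The expression $D^t_s\mathcal R_\alpha(x,y)$ equals the series in the proof of Theorem \ref{TDstKernel}, with coefficients $d_m=c_m(s+t)c_m(\alpha)/c_m(s)\sim m^{\alpha+t+1}$. We need this series to converge absolutely and uniformly on $K\times\mathbb B$. Use $\vert S_m\vert\le 1$, which holds because $S_m$ is positive and increasing with $S_m(1)=1$, or alternatively $\vert S_m\vert\lesssim 1$. Use also $\vert Z_m(x,y)\vert\le \dim H_m\,\vert x\vert^m\vert y\vert^m\lesssim m^{n-2}r^m$ for $x\in K$. Then the $m$-th term is bounded by $Cm^{\alpha+t+n-1}r^m$, which is summable. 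Hence we may integrate term by term against the finite measure $f\,d\nu_\alpha$, and $\int_{\mathbb B}D^t_s\mathcal R_\alpha(x,y)f(y)\,d\nu_\alpha(y)$ equals the series obtained in Step 1. This proves the lemma.

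\textbf{Main obstacle.} The only real issue is justifying that the termwise integral in Step 1 really gives the series expansion of $F$, i.e.\ that each $F_m$ is in $H_m$. This is handled by differentiating under the integral sign in $x$, which is allowed because $y$ ranges over a bounded set and $f\in L^1_\alpha$. Together with uniqueness of the expansion, this closes Step 1. The uniform bound in Step 2 is routine given polynomial growth of the coefficients against geometric decay $r^m$. Alternatively, one can use the continuity of $D^t_s$ from Lemma \ref{LPropDst}(i): the truncated integrals converge to $F$ uniformly on compacta, so $D^t_s$ applied to them converges to $D^t_sF$, and the right side is identified by the same uniform bound.
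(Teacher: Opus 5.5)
Your argument is the same one the paper intends; the paper defers to \cite[Lemma 3.3]{U3}. You expand $\mathcal R_\alpha$ and $D^t_s\mathcal R_\alpha$ in their series and integrate term by term against the finite measure $f\,d\nu_\alpha$, using uniform convergence on $K\times\mathbb B$. You then identify the result with the termwise action of $D^t_s$ via uniqueness of the expansion \eqref{fseries}. Step 1 is fine as written.

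There is, however, a false estimate in Step 2. $S_m$ is neither increasing nor bounded by $1$. The representation used in the proof of Theorem \ref{TPartialKernel} is
\begin{equation*}
S_m(r)=\frac{1}{B(m,n-1)}\int_0^1 t^{m-1}(1-t)^{\frac{n}{2}-1}(1-r^2t)^{\frac{n}{2}-1}\,dt .
\end{equation*}
From it, for $n\ge 3$, $S_m$ is decreasing on $[0,1]$, so $S_m\ge S_m(1)=1$. Moreover,
\begin{equation*}
S_m(0)=\frac{\Gamma(n/2)\,\Gamma(m+n-1)}{\Gamma(n-1)\,\Gamma(m+n/2)}\sim m^{n/2-1}.
\end{equation*}
For example, when $n=4$ one gets $S_m(r)=(m+2-mr^2)/2$. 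So your fallback $\vert S_m\vert\lesssim 1$ (uniformly in $m$) is also false for $n\ge 3$; only $n=2$, where $S_m\equiv 1$, is covered.

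The repair is immediate. Use $S_m(r)\lesssim m^{n/2-1}$ (\cite[Lemma 2.13]{U}, quoted in the proof of Lemma \ref{LRbig}). Then for $x\in K=\overline{r\mathbb B}$ and $y\in\mathbb B$, the $m$-th term of $D^t_s\mathcal R_\alpha(x,y)$ is bounded by a constant times $m^{\alpha+t+1}\cdot m^{n-2}\cdot m^{n-2}r^m$. This is still summable, since polynomial growth is absorbed by the geometric factor $r^m$. The termwise integration and the rest of your proof then go through unchanged.
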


\begin{proof}[Proof of Theorem \ref{TBergman} (a)$\Leftrightarrow$(e)]
Suppose $f\in\mathcal B^p_\alpha$ and $s,t\in\mathbb R$ such that
\begin{equation}\label{alphaptlarge}
\alpha+pt>-1.
\end{equation}
We show that $D^t_s\in\mathcal B^p_{\alpha+pt}$ and
there exists a constant $C=C(n,\alpha,p,s,t)$ such that
\begin{equation}\label{Dstfless}
\|D^t_s f\|_{\mathcal B^p_{\alpha+pt}}
\le C \|f\|_{\mathcal B^p_\alpha}.
\end{equation}

Pick $\beta$ such that
\begin{equation}\label{pickbeta}
\alpha+1<p(\beta+1).
\end{equation}
Then $f\in L^1_\beta$ by H\"{o}lder's inequality,
\begin{equation}\label{Holderin}
\|f\|_{L^1_\beta}
\lesssim \Bigl(\int_{\mathbb B}\vert f(x)\vert^p(1-\vert x\vert^2)^\alpha
d\nu(x)\Bigr)^{\frac{1}{p}}
\Bigl(\int_{\mathbb B} (1-\vert x\vert^2)^{(\beta-\frac{\alpha}{p})p'}
d\nu(x)\Bigr)^{\frac{1}{p'}}
\lesssim \|f\|_{\mathcal B^p_\alpha}
\end{equation}
since $(\beta-\alpha/p)p'>-1$ by \eqref{pickbeta},
and the reproducing formula
\begin{equation}\label{reproducing}
f(x)=\int_{\mathbb B} \mathcal R_\beta(x,y)f(y)\,d\nu_\beta(y)
\end{equation}
holds by Theorem \ref{TProjBergman}.
Applying $D^t_s$ to both sides and pushing it into the integral
with Lemma \ref{LPushDst} shows
\begin{equation*}
\vert D^t_s f(x)\vert \le\int_{\mathbb B}
\vert D^t_s \mathcal R_\beta(x,y)\vert\, \vert f(y)\vert\,d\nu_\beta(y)
\end{equation*}
and estimating $\vert D^t_s \mathcal R_\beta\vert$ with Theorem \ref{TDstKernel}
we obtain (note that $\beta+t>-1$ by \eqref{alphaptlarge} and \eqref{pickbeta})
\begin{equation*}
(1-\vert x\vert^2)^t\vert D^t_s f(x)\vert
\lesssim (1-\vert x\vert^2)^t\int_{\mathbb B}
\frac{\vert f(y)\vert}{[x,y]^{n+\beta+t}}\,d\nu_\beta(y).
\end{equation*}
The right-hand side is $E_{\beta,t}(\vert f\vert)$ and because
$E_{\beta,t}\colon L^p_\alpha\to L^p_\alpha$ is bounded by Lemma \ref{LEbdd},
we deduce
$\|(1-\vert x\vert^2)^t D^t_s f\|_{L^p_\alpha}\lesssim \|f\|_{\mathcal B^p_\alpha}$
and \eqref{Dstfless} follows.

To see (e)$\Rightarrow$(a), suppose there exist $s,t$ with $\alpha+pt>-1$
such that $D^t_s f\in\mathcal B^p_{\alpha+pt}$.
Then, by the previous part, $D^{-t}_{s+t} D^t_s f\in\mathcal B^p_\alpha$
and because $D^{-t}_{s+t} D^t_s f=f$ by Lemma \ref{LPropDst} (iv), we conclude that
$f\in\mathcal B^p_\alpha$ and
$\|f\|_{\mathcal B^p_\alpha}\lesssim \|D^t_s f\|_{\mathcal B^p_{\alpha+pt}}$.
\end{proof}

We next consider tangential derivatives and prove part (a)$\Leftrightarrow$(b)
of Theorem \ref{TBergman}.

\begin{proof}[Proof of Theorem \ref{TBergman} (a)$\Rightarrow$(b)]
Suppose $f\in\mathcal B^p_\alpha$ and $k\ge 1$.
We show that $T^k f\in\mathcal B^p_{\alpha+pk}$ for all $T^k\in\mathcal T^k$,
and there exists a constant $C=C(n,\alpha,p,k)$ such that
\begin{equation*}
\vert f(0)\vert+
\sum_{T^k\in\mathcal T^k}\|T^k f\|_{\mathcal B^p_{\alpha+pk}}
\le C \|f\|_{\mathcal B^p_\alpha}.
\end{equation*}

The proof is similar to the proof of part (a)$\Rightarrow$(e).
We first consider first order tangential derivatives.
Pick $\beta$ with $\alpha+1<p(\beta+1)$.
Then, the reproducing formula \eqref{reproducing} holds and applying
$T_{i,j}$ to both sides and using
$\vert T_{i,j} g(x)\vert \le \vert\nabla g(x)\vert$ shows
\begin{equation*}
\vert T_{i,j} f(x)\vert \le\int_{\mathbb B}
\vert \nabla_x \mathcal R_\beta(x,y)\vert\, \vert f(y)\vert\,d\nu_\beta(y),
\end{equation*}
where $\nabla_x$ means $\nabla$ is taken with respect to $x$.
Estimating $\vert\nabla_x \mathcal R_\beta\vert$ with
Theorem \ref{TPartialKernel}(i) we obtain
\begin{equation*}
(1-\vert x\vert^2)\vert T_{i,j} f(x)\vert
\lesssim (1-\vert x\vert^2)\int_{\mathbb B}
\frac{\vert f(y)\vert}{[x,y]^{n+\beta+1}}\,d\nu_\beta(y).
\end{equation*}
The right-hand side is $E_{\beta,1}(\vert f\vert)$ and Lemma \ref{LEbdd}
implies that $T_{i,j}f\in\mathcal B^p_{\alpha+p}$ with
$\|T_{i,j}f\|_{\mathcal B^p_{\alpha+p}}\lesssim \|f\|_{\mathcal B^p_\alpha}$.
Because $T_{i,j}f$ is $\mathcal H$-harmonic we can repeat this argument
and obtain
$\|T^k f\|_{\mathcal B^p_{\alpha+kp}}\lesssim \|f\|_{\mathcal B^p_\alpha}$
for all $T^k\in\mathcal T^k$.
Finally, $\vert f(0)\vert\lesssim \|f\|_{\mathcal B^p_\alpha}$ follows from
the boundedness of point evaluation functionals.
\end{proof}

Before showing part (b)$\Rightarrow$(a) of Theorem \ref{TBergman},
we prove a lemma.

\begin{lemma}\label{Ldividex}
Let $\alpha>-1$ and $1\le p<\infty$.
There exists $C=C(n,\alpha,p)$ such that for all $f\in\mathcal B^p_\alpha$,
\begin{equation*}
\Bigl\|\frac{f(x)-f(0)}{\vert x\vert}\Bigr\|_{L^p_\alpha}
\le C\|f\|_{\mathcal B^p_\alpha}.
\end{equation*}
\end{lemma}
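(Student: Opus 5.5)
Split $\mathbb B$ into the region $\vert x\vert\ge 1/2$ and the ball $\vert x\vert<1/2$.

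\emph{Outer region.} For $\vert x\vert\ge 1/2$ we have $1/\vert x\vert\le 2$, so
\begin{equation*}
\frac{\vert f(x)-f(0)\vert}{\vert x\vert}\le 2\vert f(x)\vert+2\vert f(0)\vert .
\end{equation*}
Since $\nu_\alpha$ is a finite measure ($\alpha>-1$), the $L^p_\alpha$ norm of this over $\{\vert x\vert\ge 1/2\}$ is at most a constant times $\|f\|_{\mathcal B^p_\alpha}+\vert f(0)\vert$. Boundedness of point evaluation at $0$ on $\mathcal B^p_\alpha$ then gives $\vert f(0)\vert\lesssim\|f\|_{\mathcal B^p_\alpha}$.

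\emph{Inner ball.} For $\vert x\vert<1/2$, use the mean value theorem along the segment from $0$ to $x$:
\begin{equation*}
\vert f(x)-f(0)\vert\le \vert x\vert\sup_{\vert z\vert\le 1/2}\vert\nabla f(z)\vert .
\end{equation*}
The quotient is therefore bounded by $\sup_{\vert z\vert\le1/2}\vert\nabla f(z)\vert$, and the weight $(1-\vert x\vert^2)^\alpha$ is comparable to $1$ there. It remains to show
\begin{equation*}
\sup_{\vert z\vert\le 1/2}\vert\nabla f(z)\vert\lesssim\|f\|_{\mathcal B^p_\alpha}.
\end{equation*}
Pick $\beta$ with $\alpha+1<p(\beta+1)$. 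By Theorem~\ref{TProjBergman}, $f=P_\beta f$. Differentiate under the integral, which is justified by the uniform convergence of the kernel series on $K\times\mathbb B$ for compact $K$. Then Theorem~\ref{TPartialKernel}(i) with $\vert\kappa\vert=1$ (note $n\ge 3$ is not needed: one can also bound the first derivatives directly, since $[z,y]\ge 1/2$ for $\vert z\vert\le1/2$) gives
\begin{equation*}
\vert\nabla f(z)\vert\lesssim\int_{\mathbb B}\vert f\vert\,d\nu_\beta=\|f\|_{L^1_\beta}.
\end{equation*}
This is $\lesssim\|f\|_{\mathcal B^p_\alpha}$ by the Hölder estimate \eqref{Holderin}.

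\emph{The case $n=2$.} Here Theorem~\ref{TPartialKernel}(i) does not literally apply for $\vert\kappa\vert=1$. Instead, I would argue that $\nabla_z\mathcal R_\beta(z,y)$ is bounded on $\{\vert z\vert\le1/2\}\times\mathbb B$. Differentiating the series \eqref{seriesRalpha} termwise, the factors $S_m$ and their derivatives are bounded on $[0,1/2]$ by polynomials in $m$, and $\vert\partial Z_m(z,y)\vert\lesssim m^{n-1}2^{-m+1}$ by Lemma~\ref{Lderzm}. The series therefore converges uniformly, since $c_m(\beta)\sim m^{\beta+1}$ grows only polynomially.

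\emph{Main obstacle.} The only delicate point is this kernel-derivative bound near the origin, which the geometric decay of $\vert z\vert^{m-1}$ handles. Combining the two regions yields the lemma.
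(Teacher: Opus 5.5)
Your proof is correct, but it takes a different route from the paper.

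\textbf{The paper's route.} The paper does not split the ball. It writes $\mathcal R_\beta(x,y)-\mathcal R_\beta(0,y)=\int_0^1\langle x,\nabla_x\mathcal R_\beta(tx,y)\rangle\,dt$. Theorem~\ref{TPartialKernel}(i) and Lemma~\ref{LInt01} then give the global bound $\vert\mathcal R_\beta(x,y)-\mathcal R_\beta(0,y)\vert\lesssim\vert x\vert/[x,y]^{n+\beta}$. This yields the pointwise inequality \eqref{foverxless}, namely $\vert f(x)-f(0)\vert/\vert x\vert\lesssim E_{\beta,0}(\vert f\vert)(x)$ on all of $\mathbb B$. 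The proof ends with the boundedness of $E_{\beta,0}$ on $L^p_\alpha$ (Lemma~\ref{LEbdd}).

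\textbf{Your route.} You note that the factor $1/\vert x\vert$ is harmless on $\vert x\vert\ge 1/2$, by finiteness of $\nu_\alpha$ and boundedness of evaluation at $0$. Near the origin you only need a uniform gradient bound on $\frac12\overline{\mathbb B}$. You get it from the reproducing formula together with $[z,y]\ge 1-\vert z\vert\ge 1/2$ there.

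\textbf{Comparison.} Your argument is more elementary. It uses neither Lemma~\ref{LEbdd} nor any global estimate of $\nabla_x\mathcal R_\beta$. Your direct series argument also covers every $n\ge 2$. The paper's appeal to Theorem~\ref{TPartialKernel}(i) with $\vert\kappa\vert=1$ formally requires $n\ge 3$; for $n=2$ one has $S_m\equiv 1$ and the needed estimate is the classical harmonic one.

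What the paper's route buys is the pointwise bound \eqref{foverxless} itself. It is reused in the proof of (b)$\Rightarrow$(a) of Theorem~\ref{TBergman}. There it is applied to $\Delta_\sigma f$ at the points $tx$ and integrated in $t$ with Lemma~\ref{LInt01}. A norm inequality like yours would not suffice for that step.

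\textbf{Two small points of precision.} First, differentiating under the integral sign is justified by uniform boundedness (or uniform convergence) of the \emph{differentiated} series on $\{\vert z\vert\le r\}\times\mathbb B$ for some $r>1/2$, so that you have an open neighbourhood. Uniform convergence of the kernel series alone does not give this; your last paragraph supplies the needed bound. Second, the polynomial-in-$m$ bound on $\partial^\lambda S_m(\vert z\vert)$ for $\vert z\vert\le 1/2$ follows from the integral representation of $S_m$ used in the proof of Theorem~\ref{TPartialKernel}. Since $1-\vert z\vert^2 t\ge 3/4$ there, it gives $\vert\partial^\lambda S_m(\vert z\vert)\vert\lesssim B(m,n/2)/B(m,n-1)\sim m^{n/2-1}$.
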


\begin{proof}
Pick $\beta$ with $\alpha+1<p(\beta+1)$.
By the reproducing formula \eqref{reproducing},
\begin{equation*}
\vert f(x)-f(0)\vert\le \int_{\mathbb B}
\vert \mathcal R_\beta(x,y)-\mathcal R_\beta(0,y)\vert
\vert f(y)\vert\,d\nu_\beta(y).
\end{equation*}
Since $\mathcal R_\beta(x,y)-\mathcal R_\beta(0,y)
=\int_0^1\langle x,\nabla_x\mathcal R_\beta(tx,y)\rangle\, dt$,
applying first Theorem \ref{TPartialKernel}(i) and then Lemma \ref{LInt01}
shows
\begin{equation*}
\vert \mathcal R_\beta(x,y)-\mathcal R_\beta(0,y)\vert
\lesssim \vert x\vert \int_0^1\frac{1}{[tx,y]^{n+\beta+1}}\, dt
\lesssim\frac{\vert x\vert}{[x,y]^{n+\beta}}.
\end{equation*}
Thus,
\begin{equation}\label{foverxless}
\frac{\vert f(x)-f(0)\vert}{\vert x\vert}
\lesssim \int_{\mathbb B} \frac{\vert f(y)\vert}{[x,y]^{n+\beta}}\,d\nu_\beta(y),
\end{equation}
and since the right-hand side is $E_{\beta,0}(\vert f\vert)$, the result
follows from Lemma \ref{LEbdd}.
\end{proof}

\begin{proof}[Proof of Theorem \ref{TBergman} (b)$\Rightarrow$(a)]
We first consider the case $k=1$.
Suppose $f\in\mathcal H(\mathbb B)$ and
$T_{i,j}f\in\mathcal B^p_{\alpha+p}$ for all $1\le i<j\le n$.
We show that $f\in\mathcal B^p_\alpha$ and there exists $C=C(n,\alpha,p)$
such that
\begin{equation*}
\|f\|_{\mathcal B^p_\alpha}
\le C
\Bigl(\sum_{1\le i<j\le n} \|T_{i,j}f\|_{\mathcal B^p_{\alpha+p}}+\vert f(0)\vert\Bigr).
\end{equation*}

We first note that
$\|f\|_{\mathcal B^p_\alpha}
\sim\|\,\vert\nabla f\vert\,\|_{L^p_{\alpha+p}}+\vert f(0)\vert$
by \cite[Theorem 10.3.3]{St1}.
Thus, it suffices to show that
$\|\,\vert\nabla f\vert\,\|_{L^p_{\alpha+p}}
\lesssim\sum_{i<j} \|T_{i,j}f\|_{\mathcal B^p_{\alpha+p}}$.
Now, by the equality
$\vert x\vert^2\vert\nabla f(x)\vert^2
=\vert Nf(x)\vert^2+\sum_{i<j}\vert T_{i,j} f(x)\vert^2$
(see \cite[p.~56]{St1}), we have
\begin{equation*}
\vert \nabla f(x)\vert\le \frac{\vert Nf(x)\vert}{\vert x\vert}
+\sum_{i<j}\frac{\vert T_{i,j}f(x)\vert}{\vert x\vert}
\end{equation*}
and by Lemma \ref{Ldividex},
\begin{equation*}
\Bigl\|\frac{T_{i,j} f(x)}{\vert x\vert}\Bigr\|_{L^p_{\alpha+p}}
\lesssim \|T_{i,j}f\|_{\mathcal B^p_{\alpha+p}}.
\end{equation*}
Hence, all we need to show is
\begin{equation}\label{Noverx}
\Bigl\|\frac{N f(x)}{\vert x\vert}\Bigr\|_{L^p_{\alpha+p}}
\lesssim \sum_{i<j}\|T_{i,j}f\|_{\mathcal B^p_{\alpha+p}}.
\end{equation}

To see \eqref{Noverx}, let
\begin{equation*}
\Delta_\sigma=\sum_{i<j} T_{i,j}^2
\end{equation*}
be the spherical Laplacian.
Then, by previously proved part (a)$\Rightarrow$(b) of Theorem \ref{TBergman},
we have $\Delta_\sigma f\in\mathcal B^p_{\alpha+2p}$ and
\begin{equation}\label{Deltaf}
\|\Delta_\sigma f\|_{\mathcal B^p_{\alpha+2p}}
\lesssim \sum_{i<j} \|T_{i,j}f\|_{\mathcal B^p_{\alpha+p}}.
\end{equation}

For $x\in\mathbb B$, we write $x=r\zeta$ with $r=\vert x\vert$ and
$\zeta\in\mathbb S$.
Since
\begin{equation*}
r^2\Delta_h=(1-r^2)\big[(1-r^2)N^2+(n-2)(1+r^2)N+(1-r^2)\Delta_\sigma\bigr]
\end{equation*}
(see \cite[p.~28]{St1}) and $\Delta_h f=0$, we have
\begin{equation*}
(1-r^2)N^2f+(n-2)(1+r^2)Nf=-(1-r^2)\Delta_\sigma f.
\end{equation*}
To solve $Nf$, for fixed $\zeta\in\mathbb S$, write $g(r)=Nf(r\zeta)$.
Since $rg'(r)=N^2f(r\zeta)$, the above equation turns into
\begin{equation*}
g'(r)+(n-2)\frac{1+r^2}{r(1-r^2)}g(r)=-\frac{1}{r}\Delta_\sigma f(r\zeta)
\end{equation*}
and solving this first order linear equation shows
\begin{equation*}
Nf(r\zeta)=-r^{2-n}(1-r^2)^{n-2}
\int_0^r t^{n-3}(1-t^2)^{2-n}\Delta_\sigma f(t\zeta)\, dt,
\end{equation*}
and using $t\le r$ and $1-r^2\le 1-t^2$, we deduce
\begin{equation}\label{Nfoverxless}
\vert Nf(r\zeta)\vert\le\int_0^r\frac{\vert\Delta_\sigma f(t\zeta)\vert}{t}\,dt
=\int_0^1\frac{\vert\Delta_\sigma f(t r\zeta)\vert}{t}\,dt.
\end{equation}

Pick $\beta$ satisfying $\alpha+2p+1<p(\beta+1)$.
Since $\Delta_\sigma f\in\mathcal B^p_{\alpha+2p}$ and $\Delta_\sigma f(0)=0$,
by the inequality \eqref{foverxless},
\begin{equation*}
\frac{\vert\Delta_\sigma f(x)\vert}{\vert x\vert}
\lesssim \int_{\mathbb B}
\frac{\vert\Delta_\sigma f(y)\vert}{[x,y]^{n+\beta}}\,d\nu_\beta(y)
\end{equation*}
and so, by \eqref{Nfoverxless}, Fubini's theorem and Lemma \ref{LInt01},
\begin{align*}
\frac{\vert Nf(x)\vert}{\vert x\vert}
\le \int_0^1\frac{\vert\Delta_\sigma f(tx)\vert}{t\vert x\vert}dt
&\lesssim \int_{\mathbb B}\vert\Delta_\sigma f(y)\vert
\int_0^1\frac{dt}{[tx,y]^{n+\beta}}\,d\nu_\beta(y)\\
&\lesssim \int_{\mathbb B}
\frac{\vert\Delta_\sigma f(y)\vert}{[x,y]^{n+\beta-1}}\, d\nu_\beta(y).
\end{align*}
The right-hand side is $(1-\vert x\vert^2)E_{\beta,-1}(\vert\Delta_\sigma f\vert)$
and it follows from Lemma \ref{LEbdd} that
$(1-\vert x\vert^2)^{-1}\frac{Nf(x)}{\vert x\vert}\in L^p_{\alpha+2p}$,
that is
$\frac{Nf(x)}{\vert x\vert}\in L^p_{\alpha+p}$ and
\begin{equation*}
\Bigl\|\frac{N f(x)}{\vert x\vert}\Bigr\|_{L^p_{\alpha+p}}
\lesssim \|\Delta_\sigma f\|_{\mathcal B^p_{\alpha+2p}}
\lesssim \sum_{i<j}\|T_{i,j}f\|_{\mathcal B^p_{\alpha+p}},
\end{equation*}
where in the last step we use \eqref{Deltaf}.
Thus, \eqref{Noverx} holds and the case $k=1$ is proved.

For the general case, let $k\ge 1$ and suppose $T^k f\in\mathcal B^p_{\alpha+pk}$
for all $T^k\in\mathcal T^k$.
Then, by what we have shown above, $T^{k-1}f\in\mathcal B^p_{\alpha+p(k-1)}$ and
\begin{equation*}
\|T^{k-1} f\|_{\mathcal B^p_{\alpha+p(k-1)}}
\lesssim \sum_{T^k\in\mathcal T^k}\|T^k f\|_{\mathcal B^p_{\alpha+pk}}
\end{equation*}
for all $T^{k-1}\in\mathcal T^{k-1}$.
Repeating this argument we conclude that $f\in\mathcal B^p_\alpha$ and
\begin{equation*}
\|f\|_{\mathcal B^p_\alpha}
\lesssim \sum_{T^k\in\mathcal T^k}\|T^k f\|_{\mathcal B^p_{\alpha+pk}}+\vert f(0)\vert.
\qedhere
\end{equation*}
\end{proof}

We next consider the case $\alpha\in\mathbb R$ and prove Theorem \ref{TBesov}.
We first show that the operators $D^t_s$ and the tangential derivatives
commute.

\begin{lemma}\label{LDTcommute}
$D^t_s(T_{i,j}f)=T_{i,j}(D^t_s f)$ for all $s,t\in\mathbb R$, $1\le i<j\le n$
and $f\in\mathcal H(\mathbb B)$.
\end{lemma}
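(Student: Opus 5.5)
The plan is to show that $T_{i,j}$ preserves each summand $S_m(\vert x\vert)f_m(x)$ of the series expansion \eqref{fseries}, mapping it to $S_m(\vert x\vert)(T_{i,j}f_m)(x)$ with $T_{i,j}f_m\in H_m$. Once this is established, both sides of the claimed identity have the same expansion $\sum_m \frac{c_m(s+t)}{c_m(s)}S_m(\vert x\vert)T_{i,j}f_m(x)$, and uniqueness of the expansion \cite[Theorem 6.3.1]{St1} finishes the proof.

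First, $T_{i,j}$ annihilates radial functions: if $g(x)=h(\vert x\vert)$, then $\partial_j g=h'(\vert x\vert)x_j/\vert x\vert$, so $x_i\partial_jg-x_j\partial_ig=0$. Since $T_{i,j}$ is a first order derivation, we get $T_{i,j}\bigl(S_m(\vert x\vert)f_m\bigr)=S_m(\vert x\vert)\,T_{i,j}f_m$. Next, $T_{i,j}f_m$ is a homogeneous polynomial of degree $m$, because multiplying by $x_i$ and differentiating once preserves the degree. It is also Euclidean harmonic, since $T_{i,j}$ commutes with $\Delta$. One can check this directly: $\Delta(x_i\partial_jf)=2\partial_i\partial_jf+x_i\partial_j\Delta f$, and the terms $2\partial_i\partial_jf$ cancel in the antisymmetric combination. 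Alternatively, $T_{i,j}$ is the generator of rotations in the $x_ix_j$-plane. Hence $T_{i,j}f_m\in H_m$.

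The remaining step, which is the only real issue, is justifying termwise application of $T_{i,j}$ to the series for $f$, i.e.\ $T_{i,j}f=\sum_m S_m(\vert x\vert)T_{i,j}f_m$. The series \eqref{fseries} converges uniformly on compact subsets of $\mathbb B$, and its terms are smooth. I would argue as follows: each partial sum $\sum_{m\le M}S_m f_m$ is $\mathcal H$-harmonic, and $\mathcal H$-harmonic functions (solutions of an elliptic equation with smooth coefficients) that converge locally uniformly also have all derivatives converging locally uniformly, by interior estimates. Hence $\nabla$ may be applied termwise and $T_{i,j}f$ is given by the series above.

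Since $T_{i,j}f\in\mathcal H(\mathbb B)$, uniqueness of the expansion shows that its $m$-th component is $T_{i,j}f_m$. Therefore
\begin{equation*}
D^t_s(T_{i,j}f)=\sum_m\frac{c_m(s+t)}{c_m(s)}S_m(\vert x\vert)T_{i,j}f_m(x).
\end{equation*}
Applying the same termwise argument to $D^t_sf\in\mathcal H(\mathbb B)$, whose components are $\frac{c_m(s+t)}{c_m(s)}f_m$, gives the same series for $T_{i,j}(D^t_sf)$. This proves the lemma.
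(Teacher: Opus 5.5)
Your proposal is correct and follows essentially the same route as the paper: $T_{i,j}$ kills the radial factor $S_m(\vert x\vert)$, $T_{i,j}f_m\in H_m$, and uniqueness of the expansion \eqref{fseries} gives both sides the common series $\sum_m \frac{c_m(s+t)}{c_m(s)}S_m(\vert x\vert)T_{i,j}f_m(x)$. You also justify termwise differentiation via interior elliptic estimates for $\Delta_h$, which the paper leaves implicit; that justification is valid.
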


\begin{proof}
Suppose $f$ has homogeneous expansion $f(x)=\sum_{m=0}^\infty S_m(\vert x\vert)f_m(x)$,
where $f_m\in H_m$.
Since $S_m(\vert x\vert)$ is radial, $T_{i,j} S_m(\vert x\vert)=0$ and therefore
\begin{equation}\label{tijf}
T_{i,j} f(x)=\sum_{m=0}^\infty S_m(\vert x\vert) T_{i,j}f_m(x).
\end{equation}
Because $T_{i,j} f_m$ is harmonic and homogeneous of degree $m$, $T_{i,j}f_m\in H_m$
and \eqref{tijf} is the (unique) homogeneous expansion of the $\mathcal H$-harmonic
function $T_{i,j}f$.
Thus,
\begin{equation*}
D^t_sT_{i,j} f(x)=\sum_{m=0}^\infty\frac{c_m(s+t)}{c_m(s)}S_m(\vert x\vert) T_{i,j}f_m(x)
\end{equation*}
which also equals $T_{i,j}D^t_s f$ by the same reasoning.
\end{proof}

\begin{proof}[Proof of Theorem \ref{TBesov}]
(a)$\Rightarrow$(b):
The case $k=0$ follows from Theorem \ref{TBergman}, so we assume $k\ge 1$.
Suppose there exists $k\ge 1$ with $\alpha+pk>-1$ such that
$T^k f\in\mathcal B^p_{\alpha+pk}$ for every $T^k\in\mathcal T^k$.
Let $s,t\in\mathbb R$ with $\alpha+pt>-1$.
We show that $D^t_s f\in\mathcal B^p_{\alpha+pt}$ and there exists
$C=C(n,\alpha,p,k,s,t)$ such that
\begin{equation*}
\|D^t_s f\|_{\mathcal B^p_{\alpha+pt}}
\le C \bigl(\sum_{T^k\in\mathcal T^k}\|T^k f\|_{\mathcal B^p_{\alpha+pk}}
+\vert f(0)\vert\bigr).
\end{equation*}

We first note that we have $\alpha+pk>-1$, $\alpha+pt>-1$ and $\alpha+pt+pk>-1$
and throughout the proof we will be in the Bergman zone.
Now, by part (a)$\Rightarrow$(e) of Theorem \ref{TBergman},
$T^k f\in\mathcal B^p_{\alpha+pk}$ implies
$D^t_s(T^k f)\in\mathcal B^p_{\alpha+pk+pt}$ and
\begin{equation}\label{DtTk}
\|D^t_s(T^k f)\|_{\mathcal B^p_{\alpha+pk+pt}}
\lesssim \|T^k f\|_{\mathcal B^p_{\alpha+pk}}.
\end{equation}
Because $D^t_s(T^kf)=T^k(D^t_s f)$ by Lemma \ref{LDTcommute},
we deduce that $T^k(D^t_s f)\in\mathcal B^p_{\alpha+pt+pk}$.
Since this is true for all $T^k\in\mathcal T^k$, by part
(b)$\Rightarrow$(a) of Theorem \ref{TBergman}, we see that
$D^t_s f\in\mathcal B^p_{\alpha+pt}$ and
\begin{equation*}
\|D^t_s f\|_{\mathcal B^p_{\alpha+pt}}
\lesssim \sum_{T^k\in\mathcal T^k}\|T^k (D^t_s f)\|_{\mathcal B^p_{\alpha+pt+pk}}
+\vert D^t_s f(0)\vert
\lesssim \sum_{T^k\in\mathcal T^k}\|T^k f\|_{\mathcal B^p_{\alpha+pk}}
+\vert f(0)\vert
\end{equation*}
by \eqref{DtTk} and the fact that $D^t_s f(0)=f(0)$.

(b)$\Rightarrow$(a):
Suppose there exist $s,t\in\mathbb R$ with $\alpha+pt>-1$
such that $D^t_s f\in\mathcal B^p_{\alpha+pt}$.
We show that for all $k\ge 0$ satisfying $\alpha+pk>-1$,
we have $T^k f\in\mathcal B^p_{\alpha+pk}$ for all $T^k\in\mathcal T^k$
and there exists $C=C(n,\alpha,p,s,t,k)$ such that
\begin{equation*}
\sum_{T^k\in\mathcal T^k}\|T^k f\|_{\mathcal B^p_{\alpha+pk}}
+\vert f(0)\vert
\le C \|D^t_s f\|_{\mathcal B^p_{\alpha+pt}}.
\end{equation*}

The proof is similar to the previous part.
By part (a)$\Rightarrow$(b) of Theorem \ref{TBergman},
$D^t_s f\in\mathcal B^p_{\alpha+pt}$ implies
$T^k(D^t_s f)\in\mathcal B^p_{\alpha+pt+pk}$
and
\begin{equation}\label{tanless}
\|T^k(D^t_s f)\|_{\mathcal B^p_{\alpha+pt+pk}}
\lesssim \|D^t_s f\|_{\mathcal B^p_{\alpha+pt}}.
\end{equation}
Hence, by Lemma \ref{LDTcommute},
$D^t_s(T^k f)\in\mathcal B^p_{\alpha+pk+pt}$
and as we stay in the Bergman zone,
part (e)$\Rightarrow$(a) of Theorem \ref{TBergman} implies
$T^k f\in\mathcal B^p_{\alpha+pk}$ with
\begin{equation*}
\|T^k f\|_{\mathcal B^p_{\alpha+pk}}
\lesssim \|D^t_s(T^k f)\|_{\mathcal B^p_{\alpha+pk+pt}}
\lesssim \|D^t_s f\|_{\mathcal B^p_{\alpha+pt}}
\end{equation*}
by \eqref{tanless}.
Finally, we have $\vert f(0)\vert=\vert D^t_s f(0)\vert
\lesssim \|D^t_s f\|_{\mathcal B^p_{\alpha+pt}}$
since point evaluation functionals are bounded on Bergman spaces.
\end{proof}

\begin{remark}\label{Rp2}
When $p=2$, the characterization of the Hilbert space $\mathcal B^2_\alpha$
given in \eqref{B2alpha} for $\alpha>-1$ holds also for $\alpha\le -1$.
To see this, pick $s,t\in\mathbb R$ with $\alpha+2t>-1$, and let
$f=\sum_{m=0}^\infty S_m(\vert x\vert)f_m\in\mathcal H(\mathbb B)$.
Then, $f\in\mathcal B^2_\alpha$ if and only if
$D^t_s f\in\mathcal B^2_{\alpha+2t}$ and this holds if and only if
\begin{equation*}
\sum_{m=1}^\infty\frac{1}{m^{\alpha+2t+1}}\frac{c_m^2(s+t)}{c_m^2(s)}
\|f_m\|^2_{L^2(\mathbb S)}<\infty.
\end{equation*}
Because $c_m(s+t)/c_m(s)\sim m^t$ by \eqref{cmasym}, the above condition
is equivalent to the condition given in \eqref{B2alpha}.

A straightforward calculation shows that for $\alpha\leq -1$ and
$f,g\in\mathcal B^2_\alpha$ with $f=\sum_{m=0}^\infty S_m(\vert x\vert)f_m$,
$g=\sum_{m=0}^\infty S_m(\vert x\vert)g_m$,
$\langle f,g\rangle_{\alpha}
=\sum_{m=0}^\infty\frac{1}{c_m(\alpha)}\langle f_m,g_m\rangle_{L^2(\mathbb S)}$
is an inner product on  $\mathcal B^2_\alpha$ with corresponding
reproducing kernel $\mathcal R_{\alpha}(x,y)$.
\end{remark}

\section{$\mathcal H$-Harmonic Bergman-Besov spaces}

In this section, we prove Theorems \ref{TIso}--\ref{TInclusion}.

\begin{proof}[Proof of Theorem \ref{TIso}]
Let $s\in\mathbb R$ and $t=(\alpha_2-\alpha_1)/p$.
We show that the operator
$D^t_s\colon\mathcal B^p_{\alpha_1}\to\mathcal B^p_{\alpha_2}$
is an isomorphism.
Pick large enough $\tau\in\mathbb R$ so that
\begin{equation*}
\alpha_1+p\tau>-1
\quad\text{and}\quad
\alpha_2+p\tau>-1.
\end{equation*}
Let $f\in\mathcal B^p_{\alpha_1}$.
Then, by Definition \ref{DBesov}, $D^\tau_s f\in\mathcal B^p_{\alpha_1+p\tau}$
and $\|D^\tau_s f\|_{\mathcal B^p_{\alpha_1+p\tau}}\sim\|f\|_{\mathcal B^p_{\alpha_1}}$.
Next, because $\alpha_1+p\tau+pt=\alpha_2+p\tau$,
by Theorem \ref{TBergman}, we have $D^t_s(D^\tau_s f)\in\mathcal B^p_{\alpha_2+p\tau}$
and $\|D^t_s(D^\tau_s f)\|_{\mathcal B^p_{\alpha_2+p\tau}}
\sim \|D^\tau_s f\|_{\mathcal B^p_{\alpha_1+p\tau}}$.
Since the operators $D^t_s$ commute, we deduce
$D^\tau_s(D^t_s f)\in\mathcal B^p_{\alpha_2+p\tau}$
and this implies that $D^t_s f\in\mathcal B^p_{\alpha_2}$ with
$\|D^t_s f\|_{\mathcal B^p_{\alpha_2}}
\sim \|D^\tau_s(D^t_s f)\|_{\mathcal B^p_{\alpha_2+p\tau}}
\sim \|f\|_{\mathcal B^p_{\alpha_1}}$.
Finally, the inverse of $D^t_s$ is $D^{-t}_{s+t}$ and by a similar argument,
if $f\in\mathcal B^p_{\alpha_2}$, then $D^{-t}_{s+t}f\in\mathcal B^p_{\alpha_1}$
and $\|D^{-t}_{s+t}f\|_{\mathcal B^p_{\alpha_1}}
\sim \|f\|_{\mathcal B^p_{\alpha_2}}$.
\end{proof}

We will use the following lemma in the proof of Theorem \ref{TProjBesov}.

\begin{lemma}\label{LRbig}
Let $\alpha\in\mathbb R$.
There exists $\varepsilon=\varepsilon(n,\alpha)>0$ such that
$\mathcal R_\alpha(x,y)\ge \frac{1}{2}$ for all
$\vert x\vert<\varepsilon$ and $y\in\mathbb B$.
\end{lemma}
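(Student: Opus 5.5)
Since $\mathcal R_\alpha(0,y)=1$ for every $y$ (only the $m=0$ term survives at $x=0$, because $Z_m(0,y)=0$ for $m\ge1$, while $c_0(\alpha)S_0Z_0=1$ with the normalization $c_0(\alpha)=1$ for $\alpha\le-1$, and the constant term equals $1$ also when $\alpha>-1$ since $\nu_\alpha$ is a probability measure), the plan is to show that $\mathcal R_\alpha(x,y)-1$ is small uniformly in $y\in\mathbb B$ when $\vert x\vert$ is small. By the mean value theorem along the segment from $0$ to $x$,
\begin{equation*}
\vert\mathcal R_\alpha(x,y)-\mathcal R_\alpha(0,y)\vert
\le \vert x\vert\sup_{0\le t\le 1}\vert\nabla_x\mathcal R_\alpha(tx,y)\vert ,
\end{equation*}
so it suffices to bound $\vert\nabla_x\mathcal R_\alpha(z,y)\vert$ by a constant $C(n,\alpha)$ for all $\vert z\vert\le 1/2$ and all $y\in\mathbb B$. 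Then $\varepsilon=\min\{1/2,1/(2C)\}$ works.

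To obtain the gradient bound, I would avoid Theorem \ref{TPartialKernel} (which is only stated for $\alpha>-1$) and work directly with the series \eqref{seriesRalpha}/Definition \ref{Dkernel}. For $\vert z\vert\le1/2$ and $m\ge1$, the term $c_m(\alpha)S_m(\vert z\vert)S_m(\vert y\vert)Z_m(z,y)$ has gradient in $z$ bounded as follows: $\vert S_m(r)\vert$ is bounded on $[0,1]$ uniformly in $m$ (from the integral representation of $S_m$ quoted in the proof of Theorem \ref{TPartialKernel}, which also gives $\vert\nabla S_m(\vert z\vert)\vert\lesssim 1$ for $\vert z\vert\le1/2$, since the factor $(1-\vert z\vert^2t)^{n/2-1-1}$ is bounded there and the $t$-integral is then comparable to $B(m,n-1)$ up to a polynomial factor in $m$). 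Lemma \ref{Lderzm} with $k=1$ and the standard bound $\vert Z_m(z,y)\vert\lesssim m^{n-2}\vert z\vert^m\vert y\vert^m$ give $\vert\nabla_z Z_m(z,y)\vert\lesssim m^{n-1}2^{-(m-1)}$ and $\vert Z_m\vert\lesssim m^{n-2}2^{-m}$. Combined with $c_m(\alpha)\sim m^{\alpha+1}$ from \eqref{cmasym}, each term's gradient is $O(m^{N}2^{-m})$ for some fixed $N$ depending on $n,\alpha$, uniformly in $y\in\mathbb B$, so the differentiated series converges uniformly and $\sup_{\vert z\vert\le1/2,\,y\in\mathbb B}\vert\nabla_z\mathcal R_\alpha(z,y)\vert<\infty$.

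The main point requiring care is the uniform-in-$m$ control of $S_m$ and its derivative near the origin; I expect the integral representation of $S_m$ to handle it directly (polynomial growth in $m$ is harmless against $2^{-m}$), and termwise differentiation is justified by the locally uniform convergence of the differentiated series. An alternative shortcut would be Theorem \ref{TDstKernel}-type bounds, but the direct series argument is cleaner and valid for every $\alpha\in\mathbb R$.
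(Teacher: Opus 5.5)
Your argument is correct and uses the same ingredients as the paper: the series for $\mathcal R_\alpha$, $c_m(\alpha)\sim m^{\alpha+1}$, polynomial-in-$m$ bounds for $S_m$ and $Z_m$, and the geometric factor $\vert x\vert^m$. The difference is that the paper skips your gradient and mean value step and bounds $\vert\mathcal R_\alpha(x,y)-1\vert\lesssim\sum_{m\ge1}m^{\alpha+1}m^{n-2}m^{n-2}\vert x\vert^m$ directly, using $S_m(r)\lesssim m^{n/2-1}$ and the estimate $\vert Z_m(x,y)\vert\lesssim m^{n-2}\vert x\vert^m\vert y\vert^m$ that you already quote.

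One correction: for $n\ge3$, $S_m$ is not bounded uniformly in $m$. From the integral representation, $S_m(0)=B(m,n/2)/B(m,n-1)\sim m^{n/2-1}$, and likewise $\vert\nabla S_m(\vert z\vert)\vert$ is only $O(m^{n/2-1})$ on $\vert z\vert\le 1/2$, not $O(1)$. Since your argument only needs polynomial growth against $2^{-m}$, the conclusion is unaffected.
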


\begin{proof}
We have
\begin{equation*}
\mathcal R_\alpha(x,y)=1+\sum_{m=1}^\infty
c_m(\alpha)S_m(\vert x\vert)S_m(\vert y\vert)Z_m(x,y)
\end{equation*}
and the lemma follows from the estimates \eqref{cmasym},
$S_m(r)\lesssim m^{n/2-1}$ \cite[Lemma 2.13]{U}, and
$\vert Z_m(x,y)\vert\lesssim m^{n-2}\vert x\vert^m\vert y\vert^m$.
\end{proof}

\begin{proof}[Proof of Theorem \ref{TProjBesov}]
Suppose $\alpha+1<p(\beta+1)$.
Pick $s,t\in\mathbb R$ with $\alpha+pt>-1$.
Note that these two inequalities imply $\beta+t>-1$.

Let $\varphi\in L^p_\alpha$.
Then $\varphi\in L^1_\beta$ as in \eqref{Holderin}.
To verify $P_\beta\varphi\in\mathcal B^p_\alpha$, we show that
$D^t_s(P_\beta\varphi)\in\mathcal B^p_{\alpha+pt}$.
Now, by Lemma \ref{LPushDst},
\begin{equation*}
D^t_s(P_\beta\varphi)(x)
=\int_{\mathbb B}D^t_s\mathcal R_\beta(x,y)\varphi(y)\,d\nu_\beta(y),
\end{equation*}
and estimating $\vert D^t_s\mathcal R_\beta\vert$ with Theorem \ref{TDstKernel}
shows
\begin{equation*}
(1-\vert x\vert^2)^t\vert D^t_s(P_\beta\varphi)(x)\vert
\lesssim (1-\vert x\vert^2)^t
\int_{\mathbb B}\frac{\vert\varphi(y)\vert}{[x,y]^{n+\beta+t}}d\nu_\beta(y).
\end{equation*}
The right-hand side is $E_{\beta,t}(\vert\varphi\vert)$ and Lemma \ref{LEbdd}
implies $(1-\vert x\vert^2)^t D^t_s(P_\beta \varphi)\in L^p_\alpha$, that is
$D^t_s(P_\beta \varphi)\in\mathcal B^p_{\alpha+pt}$, and
$\|P_\beta \varphi\|_{\mathcal B^p_\alpha}
\sim \|D^t_s(P_\beta \varphi)\|_{\mathcal B^p_{\alpha+pt}}
\lesssim \|\varphi\|_{L^p_\alpha}$.
Thus, $P_\beta \colon L^p_\alpha\to\mathcal B^p_\alpha$ is bounded.

We next show that $P_\beta$ is right-invertible.
Let $f\in\mathcal B^p_\alpha$.
Then, $D^t_\beta f\in\mathcal B^p_{\alpha+pt}$.
In addition, applying H\"older's inequality similar to \eqref{Holderin}
shows $D^t_\beta f\in L^1_{\beta+t}$ and
\begin{equation}\label{DtinL1}
\|D^t_\beta f\|_{L^1_{\beta+t}}\lesssim \|D^t_\beta f\|_{\mathcal B^p_{\alpha+pt}}
\sim\|f\|_{\mathcal B^p_\alpha}.
\end{equation}
Since $\alpha+pt>-1$, $\beta+t>-1$ and $\alpha+pt+1<p(\beta+t+1)$,
by Theorem \ref{TProjBergman},
\begin{equation*}
D^t_\beta f(x)
=\int_{\mathbb B}\mathcal R_{\beta+t}(x,y) D^t_\beta f(y)\,d\nu_{\beta+t}(y).
\end{equation*}
We apply $D^{-t}_{\beta+t}$ to both sides and push it into the integral
using Lemma \ref{LPushDst}.
By parts (ii) and (iv) of Lemma \ref{LPropDst}, we obtain
\begin{equation*}
f(x)=\frac{V_\beta}{V_{\beta+t}}\int_{\mathbb B}\mathcal R_\beta(x,y)
\bigl[(1-\vert y\vert^2)^t D^t_\beta f(y)\bigr]\,d\nu_\beta(y).
\end{equation*}
Thus $f=P_\beta\varphi$, where
$\varphi=\frac{V_\beta}{V_{\beta+t}}(1-\vert x\vert^2)^t D^t_\beta f$,
and
$\|\varphi\|_{L^p_\alpha}\sim\|D^t_\beta f\|_{\mathcal B^p_{\alpha+pt}}
\sim \|f\|_{\mathcal B^p_\alpha}$.

Suppose now that $P_\beta\colon L^p_\alpha\to\mathcal B^p_\alpha$ is bounded.
We show that the inequality \eqref{projineq} holds.
Assume first that $p>1$.
Let $\varphi(x)=(1-\vert x\vert^2)^{-(\alpha+1)/p}
\bigl(1+\log\frac{1}{1-\vert x\vert^2}\bigr)^{-1}$.
Then $\varphi\in L^p_\alpha$ and with $\varepsilon$ as given in Lemma \ref{LRbig}
for $\mathcal R_\beta$,
for $\vert x\vert<\varepsilon$, we have
\begin{equation*}
P_\beta\varphi(x)\ge \frac{1}{2V_\beta}\int_{\mathbb B}
\frac{(1-\vert y\vert^2)^{-(\alpha+1)/p+\beta}}
{1+\log\frac{1}{1-\vert y\vert^2}}\,d\nu(y).
\end{equation*}
Because $P_\beta\varphi\in\mathcal B^p_\alpha$, the above integral
must be finite, that is we must have $-(\alpha+1)/p+\beta>-1$ which
is same as \eqref{projineq}.

We next consider the case $p=1$.
For $\delta>0$, let $\varphi(x)=(1-\vert x\vert^2)^{-(\alpha+1)+\delta}$.
Then $\varphi\in L^1_\alpha$ and since $P_\beta\varphi\in\mathcal B^p_\alpha$,
similar to above, by Lemma \ref{LRbig}, the integral
$\int_{\mathbb B}(1-\vert y\vert^2)^{-(\alpha+1)+\delta+\beta}\,d\nu(y)$
must be finite for all $\delta>0$.
This shows $\alpha\le\beta$.
To see that we must have $\alpha<\beta$, assume $\alpha=\beta$.
We will obtain a contradiction with Theorem \ref{TProjBergman}.
For this, pick $t\in\mathbb R$ with $\alpha+t>-1$.
Note that the multiplication operator $M_t\colon L^1_{\alpha+t}\to L^1_\alpha$,
$M_t\varphi(x)=(1-\vert x\vert^2)^t\varphi(x)$ is an isometry.
Also, by Theorem \ref{TIso},
$D^t_\beta\colon \mathcal B^1_\alpha\to\mathcal B^1_{\alpha+t}$
is an isomorphism.
Therefore, the composition
$D^t_\beta\circ P_\beta\circ M_t\colon
L^1_{\alpha+t}\to\mathcal B^1_{\alpha+t}$
is bounded.
Now, for $\varphi\in L^1_{\alpha+t}$,
\begin{equation*}
D^t_\beta\circ P_\beta\circ M_t\varphi(x)
=D^t_\beta\int_{\mathbb B}
\mathcal R_\beta(x,y)(1-\vert y\vert^2)^t\varphi(y)\,d\nu_\beta(y)
\end{equation*}
and we can push $D^t_\beta$ into the integral with Lemma \ref{LPushDst}
since $\varphi\in L^1_{\beta+t}$ by the assumption $\alpha=\beta$.
Because $D^t_\beta\mathcal R_\beta=\mathcal R_{\beta+t}$ by part (ii) of
Lemma \ref{LPropDst}, we conclude that
$D^t_\beta\circ P_\beta\circ M_t=C P_{\beta+t}$.
Thus, $P_{\beta+t}\colon L^p_{\alpha+t}\to\mathcal B^p_{\alpha+t}$ is
bounded and this contradicts Theorem \ref{TProjBergman}.
\end{proof}

Corollary \ref{CPointeva} follows from \eqref{IntRep}.
For fixed $x_0\in\mathbb B$, we have
$\vert\mathcal R_\beta(x_0,y)\vert\lesssim 1$ by Theorem \ref{TDstKernel}
(with $t=0$) since $[x_0,y]\ge 1-\vert x_0\vert$.
Thus, together with \eqref{DtinL1}, we have
$\vert f(x_0)\vert\lesssim \|D^t_\beta f\|_{L^1_{\beta+t}}
\lesssim\|f\|_{\mathcal B^p_\alpha}$.

\begin{proof}[Proof of Theorem \ref{TDuality}]
The proof follows from the Bergman case and the isomorphisms in
Theorem \ref{TIso}.
First consider the case $1<p<\infty$.
If $f\in\mathcal B^p_\alpha$, then $D^t_s f\in\mathcal B^p_{\alpha+pt}$
and we use the norm
$\|f\|_{\mathcal B^p_\alpha}=\|D^t_s f\|_{\mathcal B^p_{\alpha+pt}}$.
Similarly, if $g\in\mathcal B^{p'}_\alpha$, then
$D^{t'}_s g\in\mathcal B^{p'}_{\alpha+p't'}=\mathcal B^{p'}_{\alpha+pt}$
and
$\|g\|_{\mathcal B^{p'}_\alpha}=\|D^{t'}_s g\|_{\mathcal B^{p'}_{\alpha+pt}}$.
For $g\in\mathcal B^{p'}_\alpha$,
define $\Lambda_g\colon \mathcal B^p_\alpha\to\mathbb C$,
\begin{equation*}
\Lambda_g(f)=\int_{\mathbb B}
D^t_s f(x) D^{t'}_s g(x)\,d\nu_{\alpha+pt}(x).
\end{equation*}
Then, by H\"older's inequality, $\Lambda_g\in(\mathcal B^p_\alpha)^*$ and
$\|\Lambda_g\|\le\|g\|_{\mathcal B^{p'}_\alpha}$.

Suppose now that $\Lambda\in(\mathcal B^p_\alpha)^*$.
Then, since $D^{-t}_{s+t}\colon\mathcal B^p_{\alpha+pt}\to\mathcal B^p_\alpha$
is an isomorphism, $\Lambda\circ D^{-t}_{s+t}\in(\mathcal B^p_{\alpha+pt})^*$.
By the duality in the Bergman case \cite[Corollary 1.4]{U}, there exists
$\tilde{g}\in\mathcal B^{p'}_{\alpha+pt}$ with
$\|\tilde{g}\|_{\mathcal B^{p'}_{\alpha+pt}}
\lesssim\|\Lambda\circ  D^{-t}_{s+t}\|
\lesssim \|\Lambda\|$ such that
\begin{equation}\label{dualeq}
\Lambda\circ D^{-t}_{s+t}(\tilde{f})
=\int_{\mathbb B}\tilde{f}(x)\tilde{g}(x)\,d\nu_{\alpha+pt}(x)
\qquad (\tilde{f}\in\mathcal B^p_{\alpha+pt}).
\end{equation}
Let $g=D^{-t'}_{s+t'}\tilde{g}$.
Then $\tilde{g}=D^{t'}_{s}g$ by Lemma \ref{LPropDst} part (iv).
Also, by Theorem \ref{TIso}, we have $g\in\mathcal B^{p'}_\alpha$ and
$\|g\|_{\mathcal B^{p'}_{\alpha}}
=\|\tilde{g}\|_{\mathcal B^{p'}_{\alpha+pt}}\lesssim\|\Lambda\|$.
Now, for $f\in\mathcal B^p_\alpha$,
$\tilde{f}=D^t_s f\in\mathcal B^p_{\alpha+pt}$
and inserting these into \eqref{dualeq} shows $\Lambda=\Lambda_g$.

To see that $g$ is unique, for fixed $x_0\in\mathbb B$, let
$f_{x_0}(x)=D^{-t}_{s+t}\mathcal R_{\alpha+pt}(x_0,x)$.
Then $D^t_s f_{x_0}=\mathcal R_{\alpha+pt}(x_0,x)$ is bounded
and so $D^t_s f_{x_0}\in\mathcal B^p_{\alpha+pt}$ and
$f_{x_0}\in\mathcal B^p_\alpha$.
We have
\begin{equation*}
\Lambda_g(f_{x_0})=\int_{\mathbb B}\mathcal R_{\alpha+pt}(x_0,x)
D^{t'}_sg(x)\,d\nu_{\alpha+pt}(x)
=D^{t'}_sg(x_0)
\end{equation*}
by the reproducing property and so if $g_1\neq g_2$, then
$D^{t'}_sg_1\neq D^{t'}_s g_2$ and $\Lambda_{g_1}\neq\Lambda_{g_2}$.

The case $p=1$ follows from the Bergman case
\cite[Theorem 1.3]{U3} in a similar way.
For $f\in\mathcal B^1_\alpha$, we use the norm
$\|f\|_{\mathcal B^1_\alpha}=\|D^t_s f\|_{\mathcal B^1_{\alpha+t}}$.
For $g\in\mathcal B$,
let $\Lambda_g\colon\mathcal B^1_\alpha\to\mathbb C$,
\begin{equation*}
\Lambda_g(f)
=\lim_{r\to 1^-}\int_{r\mathbb B} D^t_s f(x)g(x)\,d\nu_{\alpha+t}(x).
\end{equation*}
Then, by \cite[Eqn~6.1]{U3},
$\vert\Lambda_g(f)\vert
\lesssim \|D^t_s f\|_{\mathcal B^1_{\alpha+t}}\|g\|_{\mathcal B}$
and so $\|\Lambda_g\|\lesssim\|g\|_{\mathcal B}$.
Next, if $f\in(\mathcal B^1_\alpha)^*$, then
$\Lambda\circ D^{-t}_{s+t}\in(\mathcal B^1_{\alpha+t})^*$
and, again by the Bergman case, there exists a unique $g\in\mathcal B$
with $\|g\|_{\mathcal B}\lesssim \|\Lambda\|$ such that
\begin{equation*}
\Lambda\circ D^{-t}_{s+t}(\tilde{f})
=\lim_{r\to 1^-}\int_{r\mathbb B}\tilde{f}(x)g(x)\,d\nu_{\alpha+t}(x)
\qquad (\tilde{f}\in\mathcal B^1_{\alpha+t}).
\end{equation*}
For $f\in\mathcal B^1_\alpha$, taking $\tilde{f}=D^t_s f$ shows
$\Lambda=\Lambda_g$.
The statement for the predual of $\mathcal B^1_\alpha$ follows
from \cite[Theorem 1.2]{U3} in an analogous manner.
\end{proof}

\begin{proof}[Proof of Theorem \ref{TAtomic}]
Let $f\in\mathcal B^p_\alpha$.
Pick $t\in\mathbb R$ with $\alpha+pt>-1$.
Then $D^t_s f\in\mathcal B^p_{\alpha+pt}$ and by the atomic
decomposition of Bergman spaces \cite[Theorem 1.1 and Eqn.~(5)]{U2},
there exists $\{\lambda_m\}\in\ell^p$ with
$\|\{\lambda_m\}\|_{\ell^p}\sim \|D^t_s f\|_{\mathcal B^p_{\alpha+pt}}
\sim\|f\|_{\mathcal B^p_\alpha}$ such that
\begin{equation*}
D^t_s f(x)=\sum_{m=1}^\infty
\lambda_m(1-\vert a_m\vert^2)^{(s+n)-(\alpha+n)/p}
\mathcal R_{s+t}(x,a_m).
\end{equation*}
We apply $D^{-t}_{s+t}$ to both sides.
On the left $D^{-t}_{s+t}D^t_s f=f$, and on the right we can push
$D^{-t}_{s+t}$ into the series by Lemma \ref{LPropDst} (i) and uniform
convergence of the above series on compact subsets.
Since $D^{-t}_{s+t}\mathcal R_{s+t}=\mathcal R_s$, the desired result
follows.
\end{proof}

\begin{proof}[Proof of Theorem \ref{TInclusion}]
In the Bergman zone it is proved in \cite[Theorem 1.3]{U2} that
if $\alpha,\beta>-1$ and $1\le p\le q<\infty$, then
\begin{equation*}
\mathcal B^p_\alpha\subset\mathcal B^q_\beta\quad
  \text{if and only if}\quad
  \frac{\alpha+n}{p}\leq\frac{\beta+n}{q}.
\end{equation*}
For the general case, let $\alpha,\beta\in\mathbb R$.
Pick $s,t\in\mathbb R$ with $t$ large enough that $\alpha+pt>-1$
and $\beta+qt>-1$.
Then, by Theorem \ref{TIso},
$D^t_s(\mathcal B^p_\alpha)=\mathcal B^p_{\alpha+pt}$
and
$D^t_s(\mathcal B^q_\beta)=\mathcal B^q_{\beta+qt}$.
Thus,  $\mathcal B^p_\alpha\subset\mathcal B^q_\beta$
if and only if
$\mathcal B^p_{\alpha+pt}\subset\mathcal B^q_{\beta+qt}$
and this holds if and only if
$\frac{\alpha+pt+n}{p}\le\frac{\beta+qt+n}{q}$.
This last inequality is same as
$\frac{\alpha+n}{p}\le\frac{\beta+n}{q}$.
The case $q<p$ follows from the $q<p$ case of \cite[Theorem 1.3]{U2}
in a similar way.
\end{proof}

\section{Partial Derivative Characterization}

We first consider the case $\alpha>-1$ and prove part (a)$\Leftrightarrow$(c)
of Theorem \ref{TBergman}.
The proof of the implication (a)$\Rightarrow$(c) is based on the work
done before; reproducing formula of Theorem \ref{TProjBergman}, estimates
of the derivatives of the reproducing kernels in Theorem \ref{TPartialKernel}
and boundedness of the operator $E$ in Lemma \ref{LEbdd}.
The proof of part (c)$\Rightarrow$(a) uses a different approach and is based on
Hardy and Poincare inequalities.

\begin{proof}[Proof of Theorem \ref{TBergman} (a)$\Rightarrow$(c)]
Suppose $f\in\mathcal B^p_\alpha$ and $k\ge 1$.
We show that there exists a constant $C=C(n,\alpha,p,k)$ such that
\begin{align}
\textup{(i)}\ &\|\partial^\kappa f\|_{L^p_{\alpha+pk}}\le C \|f\|_{\mathcal B^p_\alpha}
\ \text{for every multi-index $\kappa$ with $\vert\kappa\vert=k$,}\label{partkf}\\
\textup{(ii)} &\sum_{\vert\kappa\vert\le k-1}\vert\partial^\kappa f(0)\vert
\le C \|f\|_{\mathcal B^p_\alpha}.\label{kappaless}
\end{align}

Let $\kappa$ be a multi-index with $\vert\kappa\vert=k$.
Pick $\beta$ with $\alpha+1<p(\beta+1)$.
By Theorem \ref{TProjBergman}, the reproducing formula
\begin{equation}\label{reprform}
f(x)=\int_{\mathbb B}\mathcal R_\beta(x,y)f(y)\,d\nu_\beta(y)
\end{equation}
holds, and so
\begin{equation}\label{partkfless}
\vert\partial^\kappa f(x)\vert
\lesssim \int_{\mathbb B}
\vert\partial^\kappa\mathcal R_\beta(x,y)\vert\,\vert f(y)\vert\,d\nu_\beta(y).
\end{equation}
We estimate $\vert\partial^\kappa \mathcal R_\beta\vert$ with
Theorem \ref{TPartialKernel} in three cases.

If $k\le n-2$, then part (i) of Theorem \ref{TPartialKernel} implies
(note that $\beta>-1$)
\begin{equation*}
(1-\vert x\vert^2)^k\vert\partial^\kappa f(x)\vert
\lesssim (1-\vert x\vert^2)^k \int_{\mathbb B}
\frac{\vert f(y)\vert}{[x,y]^{n+\beta+k}}\,d\nu_\beta(y)
=E_{\beta,k}(\vert f\vert),
\end{equation*}
and \eqref{partkf} follows from Lemma \ref{LEbdd}.

If $k=n-1$, then pick $0<\varepsilon<1$.
By part (b) of Theorem \ref{TPartialKernel}(ii),
\begin{equation*}
(1-\vert x\vert^2)^{n-1}\vert\partial^\kappa f(x)\vert
\lesssim (1-\vert x\vert^2)^{n-1-\varepsilon} \int_{\mathbb B}
\frac{\vert f(y)\vert}{[x,y]^{n+\beta+n-1-\varepsilon}}\,d\nu_\beta(y),
\end{equation*}
and \eqref{partkf} holds again by Lemma \ref{LEbdd}.

If $k\ge n$, then by part (iii) of Theorem \ref{TPartialKernel},
\begin{equation*}
(1-\vert x\vert^2)^k\vert\partial^\kappa f(x)\vert
\lesssim (1-\vert x\vert^2)^{n-1} \int_{\mathbb B}
\frac{\vert f(y)\vert}{[x,y]^{n+\beta+n-1}}\,d\nu_\beta(y),
\end{equation*}
and \eqref{partkf} follows from Lemma \ref{LEbdd}.

Finally, to see \eqref{kappaless}, note first that we have
$\|f\|_{L^1_\beta}\lesssim \|f\|_{\mathcal B^p_\alpha}$
by \eqref{Holderin}.
Now, putting $x=0$ after estimating $\vert\partial^\kappa\mathcal R_\beta\vert$
in \eqref{partkfless} and using $[0,y]=1$ gives \eqref{kappaless}.
\end{proof}

We next show part (c)$\Rightarrow$(a) of Theorem \ref{TBergman}.
Suppose that $f\in\mathcal H(\mathbb B)$ and there exists $k\ge 1$
such that $\partial^\kappa f\in L^p_{\alpha+pk}$ for every multi-index
$\vert\kappa\vert=k$.
We show that $f\in\mathcal B^p_\alpha$ and there exists a constant
$C=C(n,\alpha,p,k)$ such that
\begin{equation}\label{dimpa}
\|f\|_{\mathcal B^p_\alpha}
\le C\biggl(\sum_{\vert\kappa\vert=k} \|\partial^\kappa f\|_{L^p_{\alpha+pk}}
     +\sum_{\vert\kappa\vert\le k-1} \vert\partial^\kappa f(0)\vert\biggr)
\end{equation}

We do this through a sequence of lemmas.
We first recall Hardy's inequality:
Let $1\le p<\infty$, $\alpha>-1$ and $u<1$.
For every $g\ge 0$ defined on the interval $(u,1)$,
\begin{equation}\label{HardyI}
\int_u^1\Bigl(\int_u^x g(t)\,dt\Bigr)^p(1-x)^\alpha\,dx
\le \Bigl(\frac{p}{\alpha+1}\Bigr)^p\int_u^1 g^p(x)(1-x)^{\alpha+p}\,dx.
\end{equation}
This follows from \cite[Lemma 3.14(ii)]{SW} after a change of variable.

\begin{lemma}\label{LAHardy}
Let $\alpha>-1$, $1\le p<\infty$ and $0<r_0<1$.
There exists a constant $C=C(n,\alpha,p,r_0)$ such that for all $f\in C^1(\mathbb B)$,
\begin{equation*}
\int_{\mathbb B} \vert f(x)\vert^p(1-\vert x\vert^2)^\alpha d\nu(x)
\le C\Bigl(
\int_{\mathbb B} \vert \nabla f(x)\vert^p(1-\vert x\vert^2)^{\alpha+p} d\nu(x)
+\int_{r_0\mathbb B} \vert f(x)\vert^p d\nu(x)
\Bigr).
\end{equation*}
\end{lemma}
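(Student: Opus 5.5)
We plan to prove the estimate by integrating in polar coordinates and applying the one-dimensional Hardy inequality \eqref{HardyI} along each radius. The region near the origin is handled separately by the term $\int_{r_0\mathbb B}|f|^p\,d\nu$.

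Since $1-|x|^2\sim 1-|x|$ on $\mathbb B$, we may use $(1-r)^\alpha$ in place of $(1-r^2)^\alpha$. Fix $u$ with $r_0/2\le u<r_0$; for instance, we may work with all $u\in(r_0/2,r_0)$ and average at the end. Split
\[
\int_{\mathbb B}|f|^p(1-|x|^2)^\alpha\,d\nu
=\int_{u\mathbb B}+\int_{\mathbb B\setminus u\mathbb B}.
\]
The first piece is bounded by $C\int_{r_0\mathbb B}|f|^p\,d\nu$, because $(1-|x|^2)^\alpha$ is bounded above and below on $r_0\mathbb B$. For the second piece, write $x=r\zeta$ with $r\in(u,1)$ and $\zeta\in\mathbb S$. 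The fundamental theorem of calculus along the radius gives
\[
|f(r\zeta)|\le |f(u\zeta)|+\int_u^r|\nabla f(t\zeta)|\,dt.
\]
Using $(a+b)^p\le 2^{p-1}(a^p+b^p)$ and $r^{n-1}\le 1$, we get
\begin{align*}
\int_u^1|f(r\zeta)|^p(1-r)^\alpha r^{n-1}\,dr
&\lesssim |f(u\zeta)|^p\int_u^1(1-r)^\alpha\,dr\\
&\quad+\int_u^1\Bigl(\int_u^r|\nabla f(t\zeta)|\,dt\Bigr)^p(1-r)^\alpha\,dr.
\end{align*}
The first integral on the right is finite because $\alpha>-1$. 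By \eqref{HardyI}, applied with $g(t)=|\nabla f(t\zeta)|$, the second term is at most
\[
\Bigl(\frac{p}{\alpha+1}\Bigr)^p\int_u^1|\nabla f(t\zeta)|^p(1-t)^{\alpha+p}\,dt.
\]
Because $t\ge u\ge r_0/2$, we have $t^{n-1}\ge (r_0/2)^{n-1}$. So this last integral is bounded by a constant times the corresponding polar-coordinate integral with the factor $t^{n-1}$ included. Integrating over $\zeta\in\mathbb S$ then yields
\[
\int_{\mathbb B\setminus u\mathbb B}|f|^p(1-|x|^2)^\alpha\,d\nu
\lesssim \int_{\mathbb S}|f(u\zeta)|^p\,d\sigma(\zeta)
+\int_{\mathbb B}|\nabla f|^p(1-|x|^2)^{\alpha+p}\,d\nu.
\]

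The main obstacle is that the boundary term $\int_{\mathbb S}|f(u\zeta)|^p\,d\sigma$ is a sphere integral, not a volume integral over $r_0\mathbb B$. To handle it, average the inequality over $u\in(r_0/2,r_0)$; all constants above are uniform in $u$ in this range. This gives
\[
\frac{2}{r_0}\int_{r_0/2}^{r_0}\int_{\mathbb S}|f(u\zeta)|^p\,d\sigma\,du
\lesssim \int_{r_0\mathbb B}|f|^p\,d\nu,
\]
using $u^{n-1}\ge(r_0/2)^{n-1}$ there. A second issue is that the left-hand side depends on $u$ through the split. This is harmless: for every such $u$ the full integral is bounded by $\int_{u\mathbb B}+\int_{\mathbb B\setminus u\mathbb B}$, and both pieces are controlled as above. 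So we bound the full integral for each $u$ and then average the resulting inequality in $u$. If the right-hand side is infinite there is nothing to prove, so no a priori integrability of $f$ is needed. Combining the estimates gives the lemma, with $C$ depending only on $n,\alpha,p,r_0$.
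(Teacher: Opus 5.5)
Your proposal is correct and is essentially the paper's argument: the fundamental theorem of calculus along each ray starting at radius $u\in[r_0/2,r_0]$, Hardy's inequality \eqref{HardyI} on each ray, integration over $\mathbb S$, and averaging in $u$ (with $u^{n-1}$ bounded below) to convert the sphere term $\int_{\mathbb S}\vert f(u\zeta)\vert^p\,d\sigma(\zeta)$ into a volume integral over $r_0\mathbb B$. The only cosmetic difference is that you split off $u\mathbb B$ before averaging, whereas the paper first restricts the left side to $\mathbb B\setminus r_0\mathbb B$ (so it does not depend on $u$) and adds the integral over $r_0\mathbb B$ at the end.
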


\begin{proof}
Throughout the proof, we suppress constants depending only on $n,\alpha,p,r_0$.
Let $\zeta\in\mathbb S$ and $r_0/2\le u\le r_0$.
Since $f(r\zeta)=\int_u^r \frac{d}{dt}f(t\zeta)\,dt+f(u\zeta)$, we have
\begin{equation*}
\vert f(r\zeta)\vert^p
\lesssim \Bigl(\int_u^r\vert\nabla f(t\zeta)\vert\,dt\Bigr)^p+\vert f(u\zeta)\vert^p,
\end{equation*}
and by Hardy's inequality \eqref{HardyI},
\begin{align*}
\int_u^1 nr^{n-1}(1-r^2)^\alpha\vert f(r\zeta)\vert^p\,dr
&\lesssim \int_u^1\Bigl(\int_u^r\vert\nabla f(t\zeta)\vert\,dt\Bigr)^p
(1-r)^\alpha\,dr+\vert f(u\zeta)\vert^p\\
&\lesssim \int_u^1 \vert\nabla f(r\zeta)\vert^p (1-r^2)^{\alpha+p}\,dr
+\vert f(u\zeta)\vert^p\\
&\lesssim \int_u^1 nr^{n-1}\vert\nabla f(r\zeta)\vert^p (1-r^2)^{\alpha+p}\,dr
+\vert f(u\zeta)\vert^p,
\end{align*}
where in the last inequality we use $r\ge u\ge r_0/2$.
Integrating over $\mathbb S$ yields
\begin{equation*}
\int_{\mathbb B\backslash u\mathbb B}
\vert f(x)\vert^p(1-\vert x\vert^2)^\alpha d\nu(x)
\lesssim \int_{\mathbb B\backslash u\mathbb B}
\vert\nabla f(x)\vert^p(1-\vert x\vert^2)^{\alpha+p} d\nu(x)
+\int_{\mathbb S}\vert f(u\zeta)\vert^p d\sigma(\zeta)
\end{equation*}
for all $u\in [r_0/2,r_0]$ with the suppressed constant not depending on $u$.
Thus,
\begin{equation*}
\int_{\mathbb B\backslash r_0\mathbb B}
\vert f(x)\vert^p(1-\vert x\vert^2)^\alpha d\nu(x)
\lesssim \int_{\mathbb B}
\vert\nabla f(x)\vert^p(1-\vert x\vert^2)^{\alpha+p} d\nu(x)
+\int_{\mathbb S}\vert f(u\zeta)\vert^p d\sigma(\zeta)
\end{equation*}
for all $u\in [r_0/2,r_0]$, and multiplying both sides of the inequality
by $nu^{n-1}$ and integrating with respect to $u$ over $[r_0/2,r_0]$,
we deduce
\begin{align*}
\int_{\mathbb B\backslash r_0\mathbb B}
\vert f(x)\vert^p(1-\vert x\vert^2)^\alpha d\nu(x)
\lesssim &\int_{\mathbb B}
\vert\nabla f(x)\vert^p(1-\vert x\vert^2)^{\alpha+p} d\nu(x)\\
&+\int_{r_0\mathbb B\backslash \frac{1}{2}r_0\mathbb B}
\vert f(x)\vert^p d\nu(x).
\end{align*}
Finally, adding
$\int_{r_0\mathbb B}\vert f(x)\vert^p(1-\vert x\vert^2)^\alpha d\nu(x)
\sim \int_{r_0\mathbb B}\vert f(x)\vert^p d\nu(x)$
to both sides gives the desired result.
\end{proof}

For $f\in C^1(\mathbb B)$ and $0<r<1$, let
$\bar{f}_r=\frac{1}{\nu(r\mathbb B)}\int_{r\mathbb B}f(x)\,d\nu(x)$
be the average of $f$ over $r\mathbb B$.
We show that if $\bar{f}_{r_0}=0$, then the last term in
Lemma \ref{LAHardy} can be deleted.

\begin{lemma}\label{Lflessnabla}
Let $\alpha>-1$, $1\le p<\infty$ and $0<r_0<1$.
There exists a constant $C=C(n,\alpha,p,r_0)$ such that for all
$f\in C^1(\mathbb B)$ with $\bar{f}_{r_0}=0$,
\begin{equation*}
\int_{\mathbb B} \vert f(x)\vert^p(1-\vert x\vert^2)^\alpha\, d\nu(x)
\le C
\int_{\mathbb B} \vert \nabla f(x)\vert^p(1-\vert x\vert^2)^{\alpha+p}\, d\nu(x).
\end{equation*}
\end{lemma}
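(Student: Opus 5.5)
The plan is to start from Lemma \ref{LAHardy} and bound its last term, $\int_{r_0\mathbb B}\vert f\vert^p\,d\nu$, by the gradient term. This is where the hypothesis $\bar f_{r_0}=0$ enters, through the classical Poincar\'e inequality on the ball $r_0\mathbb B$.

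The Poincar\'e inequality for the ball $r_0\mathbb B$ (a bounded convex, hence Lipschitz, domain) states that for $f\in C^1$ on a neighborhood of $\overline{r_0\mathbb B}$ and $1\le p<\infty$,
\begin{equation*}
\int_{r_0\mathbb B}\vert f(x)-\bar f_{r_0}\vert^p\,d\nu(x)
\le C(n,p,r_0)\int_{r_0\mathbb B}\vert\nabla f(x)\vert^p\,d\nu(x).
\end{equation*}
If one prefers a self-contained argument on the convex set $r_0\mathbb B$, I would write
\begin{equation*}
f(x)-f(y)=\int_0^1\langle x-y,\nabla f(y+\tau(x-y))\rangle\,d\tau .
\end{equation*}
Averaging over $y$ gives $\vert f(x)-\bar f_{r_0}\vert\lesssim\int_{r_0\mathbb B}\vert\nabla f(z)\vert\,\vert x-z\vert^{1-n}\,d\nu(z)$. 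The Riesz-type kernel $\vert x-z\vert^{1-n}$ is integrable uniformly in $x$ over the bounded set, so Young's (or Schur's) inequality yields the $L^p$ bound for all $1\le p<\infty$.

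With $\bar f_{r_0}=0$ this gives
\begin{equation*}
\int_{r_0\mathbb B}\vert f\vert^p\,d\nu\lesssim\int_{r_0\mathbb B}\vert\nabla f\vert^p\,d\nu .
\end{equation*}
On $r_0\mathbb B$ the weight $(1-\vert x\vert^2)^{\alpha+p}$ lies between $(1-r_0^2)^{\alpha+p}$ and $1$, since $\alpha+p>0$. The right side is therefore bounded by $C(r_0)\int_{\mathbb B}\vert\nabla f\vert^p(1-\vert x\vert^2)^{\alpha+p}\,d\nu$. Substituting this into Lemma \ref{LAHardy} gives the claim, with constants depending only on $n,\alpha,p,r_0$.

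The only delicate point is the Poincar\'e step for $p=1$ and its constant. The potential-estimate route above handles all $p\ge1$ uniformly and needs only $f\in C^1(\mathbb B)$, which is smooth up to the boundary of the compact subset $\overline{r_0\mathbb B}$. I do not expect any further obstacle.
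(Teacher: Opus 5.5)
Your proposal is correct and follows the paper's proof essentially step for step. Both use the Poincar\'e inequality on $r_0\mathbb B$ together with $\bar f_{r_0}=0$ to control $\int_{r_0\mathbb B}\vert f\vert^p\,d\nu$ by $\int_{r_0\mathbb B}\vert\nabla f\vert^p\,d\nu$, insert the weight $(1-\vert x\vert^2)^{\alpha+p}$ (bounded below on $r_0\mathbb B$), and conclude from Lemma \ref{LAHardy}; your potential-estimate sketch of the Poincar\'e step is extra detail that the paper simply cites.
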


\begin{proof}
By the Poincare inequality, there exists a constant $C=C(n,p,r_0)$ such that
\begin{equation*}
\int_{r_0\mathbb B}\vert f(x)-\bar{f}_{r_0}\vert^p\,d\nu(x)
\le C \int_{r_0\mathbb B}\vert\nabla f(x)\vert^p\,d\nu(x)
\end{equation*}
for all $f\in C^1(\mathbb B)$.
Thus, if $\bar{f}_{r_0}=0$,
\begin{equation*}
\int_{r_0\mathbb B} \vert f(x)\vert^p\, d\nu(x)
\lesssim \int_{r_0\mathbb B} \vert \nabla f(x)\vert^p\, d\nu(x)
\lesssim \int_{r_0\mathbb B} \vert \nabla f(x)\vert^p(1-\vert x\vert^2)^{\alpha+p}\, d\nu(x)
\end{equation*}
and the desired result follows from Lemma \ref{LAHardy}.
\end{proof}

The above lemmas are for $f\in C^1(\mathbb B)$.
We next turn to $\mathcal H$-harmonic functions.
We first consider the special case
$f(x)=\sum_{m=k}^\infty S_m(\vert x\vert)f_m(x)$, the series starting from $k$,
and show that the average $\overline{(\partial^\lambda f)}_r=0$ for all $0<r<1$ and
$\vert\lambda\vert\le k-1$.
\begin{lemma}\label{Laverage0}
Let $k\ge 1$.
If $f\in\mathcal H(\mathbb B)$ with
$f(x)=\sum_{m=k}^\infty S_m(\vert x\vert)f_m(x)$, the series starting from $k$,
then for every multi-index $\lambda$ with $\vert\lambda\vert\le k-1$
and $0<r_0<1$,
\begin{equation*}
  \int_{r_0\mathbb B}\partial^\lambda f(x)\, d\nu(x)=0.
\end{equation*}
\end{lemma}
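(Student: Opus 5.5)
We need to show that $\int_{r_0\mathbb B}\partial^\lambda f\,d\nu=0$ when $f=\sum_{m\ge k}S_m(\vert x\vert)f_m$ and $\vert\lambda\vert\le k-1$. The idea is to integrate term by term in polar coordinates. Each term is a radial function times a spherical harmonic of degree $m\ge k>\vert\lambda\vert$. After differentiating, such a term decomposes into radial functions times harmonics of degrees $m-\vert\lambda\vert,\dots,m+\vert\lambda\vert$, none of which is $0$, so each term integrates to zero over every sphere.

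\textbf{Justifying term-by-term differentiation.} The series for $f$ converges uniformly on compact sets together with all derivatives: each term is real-analytic, and by the proof of \cite[Theorem 6.3.1]{St1} the series converges absolutely and locally uniformly in a complex neighborhood. Alternatively, one can use the estimates $\vert\partial^\mu S_m(\vert x\vert)\vert\lesssim m^{N}$ on $r_0\mathbb B$ and $\|f_m\|\lesssim m^{N}R^{-m}$ for some $R>r_0$. Hence
\begin{equation*}
\int_{r_0\mathbb B}\partial^\lambda f\,d\nu=\sum_{m\ge k}\int_{r_0\mathbb B}\partial^\lambda\bigl(S_m(\vert x\vert)f_m(x)\bigr)\,d\nu(x),
\end{equation*}
and it suffices to show that each summand vanishes.

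\textbf{Vanishing of a single term.} Fix $m\ge k$ and write $g(x)=S_m(\vert x\vert)f_m(x)=\phi(\vert x\vert^2)f_m(x)$. Here $\phi$ is smooth on $[0,1)$, since $S_m$ is a hypergeometric series in $r^2$. By the Leibniz rule, $\partial^\lambda g$ is a finite sum of terms $\phi^{(j)}(\vert x\vert^2)\,q(x)\,\partial^\mu f_m(x)$. In each term, $q$ is a polynomial arising from differentiating $\vert x\vert^2$ a total of $\vert\lambda\vert-\vert\mu\vert$ times. Thus $q$ is homogeneous of degree $2j-(\vert\lambda\vert-\vert\mu\vert)$, with $j\le \vert\lambda\vert-\vert\mu\vert$.

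Restricted to a sphere $\vert x\vert=r$, the factor $\phi^{(j)}$ is constant. So it suffices that
\begin{equation*}
\int_{\mathbb S}q(\zeta)\,\partial^\mu f_m(\zeta)\,d\sigma(\zeta)=0.
\end{equation*}
Now $\partial^\mu f_m\in H_{m-\vert\mu\vert}$ (or it is $0$). The polynomial $q$ has degree $d=2j-\vert\lambda\vert+\vert\mu\vert\le \vert\lambda\vert-\vert\mu\vert$. By the decomposition of polynomials into harmonics \cite[Chapter 5]{ABR}, $q\vert_{\mathbb S}$ is a sum of spherical harmonics of degree at most $d$. Moreover,
\begin{equation*}
d\le \vert\lambda\vert-\vert\mu\vert\le k-1-\vert\mu\vert<m-\vert\mu\vert,
\end{equation*}
so orthogonality of spherical harmonics of different degrees gives the vanishing integral. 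Integrating over $r\in(0,r_0)$ in polar coordinates then gives zero for each term.

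\textbf{Expected obstacle.} The main delicate point is the interchange of sum and integral, i.e. the local uniform convergence of the differentiated series. I would settle it by citing the smooth (indeed analytic) convergence of the expansion \eqref{fseries}. Failing that, I would bound $\partial^\lambda$ of each term on $\overline{r_0\mathbb B}$ using $S_m$ smooth in $r^2$ with polynomial growth in $m$ of derivatives on $[0,r_0]$ (via the integral representation of $S_m$ in the proof of Theorem \ref{TPartialKernel}), together with geometric decay of $\|f_m\|_{L^2(\mathbb S)}r_1^m$ for $r_0<r_1<1$ and Lemma \ref{Lderzm}-type bounds.
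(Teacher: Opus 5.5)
Your proposal is correct and is essentially the paper's proof. Both differentiate term by term, write $\partial^\lambda\bigl(S_m(\vert x\vert)f_m(x)\bigr)$ as radial factors times polynomials of degree at most $\vert\lambda\vert-\vert\mu\vert$ times $\partial^\mu f_m\in H_{m-\vert\mu\vert}$, and make each spherical integral vanish because a harmonic of degree $m-\vert\mu\vert$ is orthogonal on $\mathbb S$ to all polynomials of lower degree; the paper phrases this with monomials $x^\delta$, $\vert\delta\vert+\vert\epsilon\vert\le\vert\lambda\vert$. The one difference is that the paper simply asserts the term-by-term differentiation from uniform convergence of the series and its derivatives, whereas your second justification (polynomial-in-$m$ bounds on derivatives of $S_m(\vert x\vert)$ over $\overline{r_0\mathbb B}$, plus geometric decay of $\|f_m\|_{L^2(\mathbb S)}$) actually proves that step.
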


\begin{proof}
In the proof, we use the fact that if $q\in H_m$ and $p$ is a polynomial
of degree less than $m$, then $\int_{\mathbb S} pq\,d\sigma=0$
(see \cite[Proposition 5.9]{ABR}).

Let $\lambda$ be a multi-index with $\vert\lambda\vert\le k-1$.
By uniform convergence of the series expansion of $f$ and its partial
derivatives, we can differentiate term by term.
We do not need the exact formula of the derivative
$\partial^\lambda(S_m(\vert x\vert)f_m(x))$.
Only terms of special form will occur in the derivative and we only mention these.
Because $S_m(\vert x\vert)=C\,{}_2F_1(m,1-n/2;m+n/2;\vert x\vert^2)$,
for any multi-index $\gamma$, $\partial^\gamma S_m(\vert x\vert)$ is a
sum of multiples of terms of the form
\begin{equation*}
x^\delta G(\vert x\vert^2)
\end{equation*}
with
$\vert\delta\vert \le\vert\gamma\vert$ and $G$ is some function which
depends on $n,m,\gamma,\delta$.
Therefore, $\partial^\lambda(S_m(\vert x\vert)f_m(x))$ is a sum of terms
of the form
\begin{equation*}
x^\delta G(\vert x\vert^2)\partial^\epsilon f_m(x),
\end{equation*}
with $\vert\delta\vert+\vert\epsilon\vert \le \vert\lambda\vert\le k-1$.
Integrating in polar coordinates shows
\begin{equation*}
\int_{r_0\mathbb B} x^\delta G(\vert x\vert^2)\partial^\epsilon f_m(x)\,d\nu(x)
=\int_0^{r_0} nr^{n-1} r^{\vert\delta\vert}G(r^2)r^{m-\vert\epsilon\vert}
\int_{\mathbb S} \zeta^\delta\partial^\epsilon f_m(\zeta)\,d\sigma(\zeta)\,dr.
\end{equation*}
For $m\ge k$, the integral over $\mathbb S$ vanishes since
$\partial^\epsilon f_m\in H_{m-\vert\epsilon\vert}$ and
$\vert\delta\vert<m-\vert\epsilon\vert$.
\end{proof}

\begin{lemma}\label{LStartk}
Let $\alpha>-1$, $1\le p<\infty$ and $k\ge 1$.
There exists a constant $C=C(n,\alpha,p,k)$ such that for all
$f\in\mathcal H(\mathbb B)$ with
$f(x)=\sum_{m=k}^\infty S_m(\vert x\vert)f_m(x)$,
the series starting from $k$, we have
\begin{equation*}
\|f\|_{\mathcal B^p_\alpha}
\le C \sum_{\vert\kappa\vert=k} \|\partial^\kappa f\|_{L^p_{\alpha+pk}}.
\end{equation*}
\end{lemma}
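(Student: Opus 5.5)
The plan is to descend one derivative order at a time, using Lemma \ref{Lflessnabla} together with the vanishing averages from Lemma \ref{Laverage0}. Fix $0<r_0<1$, say $r_0=1/2$. For each multi-index $\lambda$ with $\vert\lambda\vert\le k-1$, the function $g=\partial^\lambda f$ is $C^1$ (indeed smooth) on $\mathbb B$. By Lemma \ref{Laverage0}, $\bar g_{r_0}=0$. So Lemma \ref{Lflessnabla}, applied with weight $\alpha+p\vert\lambda\vert>-1$, gives
\begin{equation*}
\int_{\mathbb B}\vert\partial^\lambda f\vert^p(1-\vert x\vert^2)^{\alpha+p\vert\lambda\vert}\,d\nu
\lesssim\int_{\mathbb B}\vert\nabla\partial^\lambda f\vert^p(1-\vert x\vert^2)^{\alpha+p\vert\lambda\vert+p}\,d\nu .
\end{equation*}
Since $\vert\nabla\partial^\lambda f\vert\le\sum_{i}\vert\partial^{\lambda+e_i}f\vert$, the right-hand side is bounded by a constant times $\sum_{\vert\mu\vert=\vert\lambda\vert+1}\|\partial^\mu f\|^p_{L^p_{\alpha+p(\vert\lambda\vert+1)}}$. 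The weights normalized by $V_\alpha$ only change constants depending on $n,\alpha,p,k$.

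Start at level $\vert\lambda\vert=k-1$ and iterate downward through $k-2,\dots,0$. At each stage the quantity $\sum_{\vert\lambda\vert=j}\|\partial^\lambda f\|_{L^p_{\alpha+pj}}$ is controlled by the same sum at level $j+1$. After $k$ steps, $\lambda=0$ gives $\|f\|_{L^p_\alpha}\lesssim\sum_{\vert\kappa\vert=k}\|\partial^\kappa f\|_{L^p_{\alpha+pk}}$. Because $f\in\mathcal H(\mathbb B)$, this says exactly that $f\in\mathcal B^p_\alpha$ with the stated norm bound. The constant depends only on $n,\alpha,p,k$ and the fixed $r_0$.

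The only points needing care are these. First, Lemma \ref{Laverage0} must be applied with $\vert\lambda\vert\le k-1$ for every intermediate $\lambda$; it is, since the series of $f$ starts at $m=k$ and $\partial^\lambda f$ itself is not assumed $\mathcal H$-harmonic. Second, Lemma \ref{Lflessnabla} is stated for general $C^1$ functions, so non-$\mathcal H$-harmonicity of $\partial^\lambda f$ is harmless. Third, if the right-hand side is infinite there is nothing to prove, so no a priori integrability is needed: Hardy's inequality in Lemma \ref{LAHardy} holds for nonnegative functions without finiteness assumptions. I expect no real obstacle. The substantive work, namely the vanishing averages, was already done in Lemma \ref{Laverage0}, and the main thing to check is that the weight exponents stay $>-1$ at every step, which holds since $\alpha>-1$.
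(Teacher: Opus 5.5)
Your proposal is correct and follows the paper's proof essentially verbatim: fix $r_0=1/2$, use Lemma \ref{Laverage0} to get vanishing averages of $\partial^\lambda f$ for $\vert\lambda\vert\le k-1$, apply Lemma \ref{Lflessnabla} with weight $\alpha+p\vert\lambda\vert>-1$, and iterate downward from $\vert\lambda\vert=k-1$ to $0$. Your extra remarks are accurate and fill in details the paper leaves implicit. These are that $\partial^\lambda f$ need only be $C^1$ rather than $\mathcal H$-harmonic, and that an infinite right-hand side is harmless.
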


\begin{proof}
Pick an $r_0$ with $0<r_0<1$, say $r_0=1/2$.
Let $\lambda$ be a multi-index with $\vert\lambda\vert=k-1$.
Then, by Lemmas \ref{Lflessnabla} and \ref{Laverage0},
\begin{equation*}
\int_{\mathbb B} \vert\partial^\lambda f(x)\vert^p
(1-\vert x\vert^2)^{\alpha+p(k-1)}\,d\nu(x)
\lesssim
\sum_{\vert\kappa\vert=k}
\int_{\mathbb B} \vert\partial^\kappa f(x)\vert^p
(1-\vert x\vert^2)^{\alpha+pk}\,d\nu(x),
\end{equation*}
that is,
$\|\partial^\lambda f\|_{L^p_{\alpha+p(k-1)}}
\lesssim\sum_{\vert\kappa\vert=k}\|\partial^\kappa f\|_{L^p_{\alpha+pk}}$.
Repeating this for multi-indices with $\vert\lambda\vert=k-2,k-3,\dots,0$,
we obtain the desired result.
\end{proof}

We remove the restriction on $f$ with the following two elementary lemmas.

\begin{lemma}\label{LDerat0}
Let $m\ge 1$ and $p_m\in H_m$.
Then, for every multi-index $\lambda$ with $\vert\lambda\vert<m$, we have
$\partial^\lambda\bigl(S_m(\vert x\vert)p_m(x)\bigr)_{\vert_{x=0}}=0$.
\end{lemma}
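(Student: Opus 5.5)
The idea is to reduce the claim to a parity and degree count at the origin. I would expand $S_m(\vert x\vert)$ as a power series in $\vert x\vert^2$ near $x=0$. Since $S_m(r)$ is a constant multiple of ${}_2F_1(m,1-\frac n2;m+\frac n2;r^2)$, which is analytic in $r^2$ for $\vert r\vert<1$, we can write
\begin{equation*}
S_m(\vert x\vert)=\sum_{j=0}^\infty a_j\vert x\vert^{2j}
\end{equation*}
with the series converging absolutely and locally uniformly, together with all its derivatives, on a neighborhood of $0$. Multiplying by $p_m$ gives
\begin{equation*}
S_m(\vert x\vert)p_m(x)=\sum_{j=0}^\infty a_j\vert x\vert^{2j}p_m(x).
\end{equation*}
This is an expansion into homogeneous polynomials: the $j$-th term is a homogeneous polynomial of degree $m+2j\ge m$.

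Next I would use that the Taylor coefficients at $0$ of a real-analytic function are read off degree by degree. For $\vert\lambda\vert<m$, the value $\partial^\lambda g(0)$ is $\lambda!$ times the coefficient of $x^\lambda$ in the homogeneous component of $g$ of degree $\vert\lambda\vert$. Here every homogeneous component has degree at least $m$, so the degree-$\vert\lambda\vert$ component is zero.

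One can also argue directly, without using analyticity beyond termwise differentiation. Differentiate the series term by term, which is justified by locally uniform convergence of the derivatives near $0$. For each $j$, $\partial^\lambda\bigl(\vert x\vert^{2j}p_m(x)\bigr)$ is a homogeneous polynomial of degree $m+2j-\vert\lambda\vert>0$. A homogeneous polynomial of positive degree vanishes at $0$, so every term is zero at $x=0$.

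The only point needing care is the justification of term-by-term differentiation at the origin. For this, note that the radius of convergence of the hypergeometric series in $r^2$ is $1$, so the series in $x$ converges absolutely on $\mathbb B$ when its coefficients are bounded by the majorant series. Its derivatives converge uniformly on $\frac12\mathbb B$, which contains the origin.

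The restriction $m\ge1$ is not actually used. It only ensures that the statement is non-vacuous, since for $m=0$ there is no $\lambda$ with $\vert\lambda\vert<0$.
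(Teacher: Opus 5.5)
Your proof is correct, but it takes a heavier route than the paper's.

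You expand $S_m(\vert x\vert)$ as a power series in $\vert x\vert^2$ and note that every term of $S_m(\vert x\vert)p_m(x)$ is homogeneous of degree $m+2j\ge m$. You then have to justify term-by-term differentiation, or the identification of Taylor coefficients, from the convergence of the hypergeometric series.

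The paper keeps $S_m(\vert x\vert)$ as an unexpanded smooth function near $0$ and applies the Leibniz rule:
\begin{equation*}
\partial^\lambda\bigl(S_m(\vert x\vert)p_m(x)\bigr)=\sum_{\gamma\le\lambda}\binom{\lambda}{\gamma}\partial^{\lambda-\gamma}S_m(\vert x\vert)\,\partial^\gamma p_m(x).
\end{equation*}
Each $\partial^\gamma p_m$ with $\vert\gamma\vert\le\vert\lambda\vert<m$ is homogeneous of degree $m-\vert\gamma\vert\ge 1$, so it vanishes at $0$. This uses only that $S_m(\vert x\vert)$ is $C^\infty$ near the origin, which holds because it is a smooth function of $\vert x\vert^2$. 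So the one step you flag as needing care disappears. That step is also the loosest part of your write-up: the sentence about coefficients ``bounded by the majorant series'' and uniform convergence of derivatives on $\frac12\mathbb B$ is true but not cleanly argued.

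What your expansion gives in return is slightly more information. The Taylor expansion of $S_m(\vert x\vert)p_m(x)$ at $0$ starts in degree $m$, with leading homogeneous part $S_m(0)p_m$. This is exactly the fact used in Lemma~\ref{Lp0norm}, where the paper again gets $\partial^\lambda(S_jp_j)(0)=S_j(0)\partial^\lambda p_j(0)$ from the Leibniz rule.
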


\begin{proof}
We have
$\partial^\lambda(S_m(\vert x\vert)p_m(x))
=\sum_{\gamma\le\lambda}\binom{\lambda}{\gamma}
\partial^{\lambda-\gamma} S_m(\vert x\vert)\partial^\gamma p_m(x)$,
and $\partial^\gamma p_m$ is a homogeneous polynomial of degree
$m-\vert\gamma\vert$.
Because $m-\vert\gamma\vert\ge 1$, we have $\partial^\gamma p_m(0)=0$
and the result follows.
\end{proof}

For $k\ge 0$,
\begin{equation*}
\mathcal P_k=\Bigl\{\sum_{m=0}^k S_m(\vert x\vert)p_m(x):p_m\in H_m\Bigr\}
\end{equation*}
is a finite dimensional vector space and
\begin{equation*}
\|p\|_0:=\sum_{\vert\lambda\vert\le k}\vert\partial^\lambda p(0)\vert
\qquad (p\in\mathcal P_k)
\end{equation*}
is a norm on $\mathcal P_k$.
The only non-trivial part to check is $\|p\|_0=0$ implies $p\equiv 0$.

\begin{lemma}\label{Lp0norm}
If $p\in\mathcal P_k$ and $\|p\|_0=0$, then $p\equiv0$.
\end{lemma}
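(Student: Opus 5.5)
We argue by induction on $k$, peeling off the top-degree component at each step. Write $p=\sum_{m=0}^k S_m(\vert x\vert)p_m(x)$ with $p_m\in H_m$ and assume $\partial^\lambda p(0)=0$ for all $\vert\lambda\vert\le k$. The claim is that $p_k\equiv 0$; once this holds, $p\in\mathcal P_{k-1}$ with $\|p\|_0=0$ (the vanishing of all derivatives of order $\le k-1$ is part of the hypothesis), and the induction hypothesis gives $p\equiv 0$. The base case $k=0$ is immediate: $p=S_0(\vert x\vert)p_0$ with $p_0$ a constant, so $0=p(0)=S_0(0)p_0$ and $p_0=0$ provided $S_0(0)\neq 0$.

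To isolate $p_k$, take any multi-index $\lambda$ with $\vert\lambda\vert=k$ and evaluate $\partial^\lambda p(0)$ term by term. For $m<k$, the term $\partial^\lambda\bigl(S_m(\vert x\vert)p_m\bigr)(0)$ need not vanish, since $\vert\lambda\vert>m$. So we cannot simply apply Lemma \ref{LDerat0} at order $k$; instead we use a parity/degree argument. Because $S_m(\vert x\vert)$ is a smooth (indeed real-analytic) function of $\vert x\vert^2$ near $0$, the product $S_m(\vert x\vert)p_m(x)$ has a Taylor expansion at $0$ of the form $S_m(0)p_m(x)+\vert x\vert^2 p_m(x)(\cdots)$. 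Hence its homogeneous Taylor components occur only in degrees $m, m+2, m+4,\dots$, and its degree-$m+2j$ component is $a_j\vert x\vert^{2j}p_m(x)$ for constants $a_j$.

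Collecting the degree-$k$ Taylor component of $p$ at $0$ then gives
\begin{equation*}
S_k(0)p_k(x)+\sum_{j\ge1,\ k-2j\ge 0} a_{j,k-2j}\vert x\vert^{2j}p_{k-2j}(x).
\end{equation*}
By hypothesis all derivatives of order $k$ at $0$ vanish, so this polynomial is identically zero. The decomposition of homogeneous polynomials of degree $k$ as $\bigoplus_j \vert x\vert^{2j}H_{k-2j}$ is direct (see \cite[Chapter 5]{ABR}), so each summand vanishes separately; in particular $S_k(0)p_k=0$.

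The one remaining check is that $S_m(0)\neq 0$ for every $m$, which is needed both in the base case and to conclude $p_k=0$. From \eqref{Sm}, $S_m(0)=1/{}_2F_1(m,1-\frac n2;m+\frac n2;1)$, and by Gauss's formula this equals a ratio of Gamma values, so it is finite and nonzero. Alternatively, the integral representation of $S_m$ used in the proof of Theorem \ref{TPartialKernel} shows directly that $S_m(0)=B(m,n/2)/B(m,n-1)>0$. Thus $p_k\equiv0$, and the induction closes. The main obstacle is correctly justifying that the Taylor expansion of $S_m(\vert x\vert)$ at $0$ contains only even powers of $\vert x\vert$ and that the direct-sum decomposition forces $p_k=0$; everything else is bookkeeping.
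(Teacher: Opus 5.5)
Your proof is correct, but it runs the induction in the opposite direction from the paper, and that is why it needs extra machinery.

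The paper inducts upward. First, $p(0)=0$ gives $p_0=0$. If $p_0=\dots=p_{j-1}=0$, take $\vert\lambda\vert=j$: the terms with $m<j$ are already gone, and the terms with $m>j$ vanish at $0$ by Lemma \ref{LDerat0}, since $\vert\lambda\vert<m$. The Leibniz rule then leaves only $\partial^\lambda(S_jp_j)(0)=S_j(0)\partial^\lambda p_j(0)$. So every $j$-th order derivative of the homogeneous polynomial $p_j$ vanishes, and $p_j=0$.

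When you peel off the top degree instead, you run into exactly the lower-degree contamination you point out. To remove it you need two further facts. One is that $S_m(\vert x\vert)$ is a smooth function of $\vert x\vert^2$, so $S_m(\vert x\vert)p_m$ contributes only $\vert x\vert^{2j}p_m$ in degree $m+2j$. The other is that the decomposition $\bigoplus_j\vert x\vert^{2j}H_{k-2j}$ is direct, or equivalently that a harmonic homogeneous polynomial of the form $\vert x\vert^2q(x)$ must vanish. Both facts are standard and you use them correctly.

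The paper's route is more economical: it reuses Lemma \ref{LDerat0} and needs nothing about $S_m$ beyond smoothness at $0$ and $S_j(0)\neq0$. Yours gives a little more, namely an explicit triangular description of the degree-$k$ Taylor component of $p$ at $0$ in terms of the harmonic decomposition.

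Your check that $S_m(0)\neq0$ via Gauss's formula (valid since $c-a-b=n-1>0$) fills in a point the paper only asserts. The Beta-integral alternative applies only for $m\ge1$, but $S_0\equiv1$ covers $m=0$.
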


\begin{proof}
Suppose $p(x)=\sum_{m=0}^k S_m(\vert x\vert)p_m(x)$ and $\|p\|_0=0$.
We use induction to show that each $p_m=0$.
First, $p(0)=0$ implies $p_0=0$.
Next, assume that $p_0,p_1,\dots,p_{j-1}=0$ $(j\le k)$.
Let $\lambda$ be a multi-index with $\vert\lambda\vert=j$.
Then, by the induction hypothesis and Lemma \ref{LDerat0},
\begin{align*}
0=\partial^\lambda p(0)
&=\partial^\lambda\Bigl(\sum_{m=0}^k S_m(\vert x\vert)p_m(x)\Bigr)_{\vert_{x=0}}
=\partial^\lambda\bigl(S_j(\vert x\vert)p_j(x)\bigr)_{\vert_{x=0}}\\
&=\sum_{\gamma\le\lambda}\binom{\lambda}{\gamma}
\bigl(\partial^{\lambda-\gamma}S_j(\vert x\vert)\partial^\gamma p_j(x)\bigr)_{\vert_{x=0}}
=S_j(0)\partial^\lambda p_j(0),
\end{align*}
since $\partial^\gamma p_j(0)=0$ for $\vert\gamma\vert\le\vert\lambda\vert-1$.
Because $S_j(0)\neq 0$, we see that $\partial^\lambda p_j(0)=0$, and since this
is true for all $\vert\lambda\vert=j$, we conclude that $p_j=0$.
\end{proof}

\begin{proof}[Proof of Theorem \ref{TBergman} (c)$\Rightarrow$(a)]
Let $f\in\mathcal H(\mathbb B)$ with
$f(x)=\sum_{m=0}^\infty S_m(\vert x\vert)f_m(x)
=:\sum_{m=k}^\infty S_m(\vert x\vert)f_m(x)+p_{k-1}(x)$.
Applying Lemma \ref{LStartk} to $f-p_{k-1}$ shows
\begin{equation*}
\|f\|_{\mathcal B^p_\alpha}
\lesssim \sum_{\vert\kappa\vert=k} \|\partial^\kappa f\|_{L^p_{\alpha+pk}}
+\|p_{k-1}\|_{L^p_\alpha}
+\sum_{\vert\kappa\vert=k} \|\partial^\kappa p_{k-1}\|_{L^p_{\alpha+pk}}.
\end{equation*}
Now, $\|p\|_{L^p_\alpha}
+\sum_{\vert\kappa\vert=k} \|\partial^\kappa p\|_{L^p_{\alpha+pk}}$
is a norm on the finite dimensional vector space $\mathcal P_{k-1}$ and so
it is equivalent to $\|p\|_0$.
Thus,
\begin{equation*}
\|f\|_{\mathcal B^p_\alpha}
\lesssim \sum_{\vert\kappa\vert=k} \|\partial^\kappa f\|_{L^p_{\alpha+pk}}
+\sum_{\vert\kappa\vert\le k-1}\vert\partial^\kappa p_{k-1}(0)\vert.
\end{equation*}
Finally, since $\partial^\kappa p_{k-1}(0)=\partial^\kappa f(0)$
for $\vert\kappa\vert\le k-1$ by Lemma \ref{LDerat0}, we obtain \eqref{dimpa}.
\end{proof}

We now consider the case $\alpha\in\mathbb R$ and show part (a)$\Leftrightarrow$(b)
of Theorem \ref{TBesovPartial}.

\begin{proof}[Proof of Theorem \ref{TBesovPartial} (a)$\Rightarrow$(b)]
The proof is similar to the Bergman case, except that
restriction \eqref{Condalpha} is required to ensure that the conditions
of Lemma \ref{LEbdd} hold.

Suppose \eqref{Condalpha} holds and $f\in\mathcal B^p_\alpha$.
Let $k\ge 1$ satisfy
\begin{equation}\label{alphapk}
\alpha+pk>-1.
\end{equation}
We first show that there exists $C=C(n,\alpha,p,k)$ such that for every
$\vert\kappa\vert=k$,
\begin{equation}\label{partialkappaf}
\|\partial^\kappa f\|_{L^p_{\alpha+pk}}\le C\|f\|_{B^p_\alpha}.
\end{equation}

Pick $t\in\mathbb R$ with $\alpha+pt>-1$ and large enough
$\beta$ satisfying both inequalities
\begin{equation}\label{betabig}
\alpha+1<p(\beta+1)
\qquad \text{and}
\qquad \beta>-1.
\end{equation}
Then the integral representation \eqref{IntRep} holds and differentiating shows
\begin{equation}\label{partialfsmall}
\vert\partial^\kappa f(x)\vert
\lesssim\int_{\mathbb B} \vert\partial^\kappa\mathcal R_\beta(x,y)\vert\,
\bigl((1-\vert y\vert^2)^t \vert D^t_\beta f(y)\vert\bigr)\,d\nu_\beta(y).
\end{equation}
We next estimate $\vert\partial^\kappa\mathcal R_\beta\vert$ with
Theorem \ref{TPartialKernel} in three cases.
Note first that we have $(1-\vert x\vert^2)^t D^t_\beta f\in L^p_\alpha$ and
$\|(1-\vert x\vert^2)^t D^t_\beta f\|_{L^p_\alpha}\sim\|f\|_{\mathcal B^p_\alpha}$.

If $k\le n-2$, then by part (i) of Theorem \ref{TPartialKernel} (note that
we have $\beta>-1$)
\begin{equation*}
(1-\vert x\vert^2)^k\vert\partial^\kappa f(x)\vert
\lesssim (1-\vert x\vert^2)^k\int_{\mathbb B}
\frac{(1-\vert y\vert^2)^t \vert D^t_\beta f(y)\vert}{[x,y]^{n+\beta+k}}
\,d\nu_\beta(y).
\end{equation*}
The right-hand side is $E_{\beta,k}((1-\vert y\vert^2)^t \vert D^t_\beta f(y)\vert)$
and $E_{\beta,k}\colon L^p_\alpha\to L^p_\alpha$ is bounded by Lemma \ref{LEbdd}
because of \eqref{alphapk} and \eqref{betabig}.
This shows \eqref{partialkappaf}.

We need \eqref{Condalpha} when $k\ge n-1$.
If $k=n-1$, we choose $0<\varepsilon<1$ satisfying $-p(n-1-\varepsilon)<\alpha+1$
possible by \eqref{Condalpha}.
Then part (b) of Theorem \ref{TPartialKernel} (ii) implies

\begin{equation*}
(1-\vert x\vert^2)^{n-1}\vert\partial^\kappa f(x)\vert
\lesssim (1-\vert x\vert^2)^{n-1-\varepsilon}\int_{\mathbb B}
\frac{(1-\vert y\vert^2)^t \vert D^t_\beta f(y)\vert}{[x,y]^{n+\beta+n-1-\varepsilon}}
\,d\nu_\beta(y).
\end{equation*}
The conditions of Lemma \ref{LEbdd} are satisfied and \eqref{partialkappaf} follows.

If $k\ge n$, then by part (iii) of Theorem \ref{TPartialKernel}, we have
\begin{equation*}
(1-\vert x\vert^2)^k\vert\partial^\kappa f(x)\vert
\lesssim (1-\vert x\vert^2)^{n-1}\int_{\mathbb B}
\frac{(1-\vert y\vert^2)^t \vert D^t_\beta f(y)\vert}{[x,y]^{n+\beta+n-1}}
\,d\nu_\beta(y),
\end{equation*}
and conditions of Lemma \ref{LEbdd} are satisfied by \eqref{Condalpha}.
Thus, \eqref{partialkappaf} holds.

Finally, to see
$\sum_{\vert\kappa\vert\le k-1}\vert\partial^\kappa f(0)\vert
\lesssim \|f\|_{\mathcal B^p_\alpha},$
put $x=0$ in \eqref{partialfsmall}.
As $\vert\partial^\kappa\mathcal R_\beta(0,y)\vert\lesssim 1$,
we obtain
$\vert\partial^\kappa f(0)\vert\lesssim \|D^t_\beta f\|_{L^1_{\beta+t}}$,
and the result follows from \eqref{DtinL1}.
\end{proof}

\begin{proof}[Proof of Theorem \ref{TBesovPartial} (b)$\Rightarrow$(a)]
Suppose $\alpha\in\mathbb R$, $f\in\mathcal H(\mathbb B)$
and there exists $k\ge 1$ with $\alpha+kp>-1$ such that
$\partial^\kappa f\in L^p_{\alpha+pk}$ for every multi-index $\vert\kappa\vert=k$.
We show that $f\in\mathcal B^p_\alpha$ and there exists a constant
$C=C(n,\alpha,p,k)$ such that
\begin{equation*}
\|f\|_{\mathcal B^p_\alpha}
\le C
\Bigl(\sum_{\vert\kappa\vert=k} \|\partial^\kappa f\|_{L^p_{\alpha+pk}}
     +\sum_{\vert\kappa\vert\le k-1} \vert\partial^\kappa f(0)\vert\Bigr).
\end{equation*}

We employ the tangential derivative characterization of $\mathcal B^p_\alpha$.
Because $\|f\|_{\mathcal B^p_\alpha}
\sim\sum_{T^k\in\mathcal T^k}\|T^k f\|_{L^p_{\alpha+pk}}+\vert f(0)\vert$
and $\vert T^k f(x)\vert
\le C\sum_{1\le\vert\lambda\vert\le k}\vert\partial^\lambda f(x)\vert$
for $T^k\in\mathcal T^k$, it suffices to show that
\begin{equation}\label{partlambdaf}
\vert f(0)\vert
+\sum_{1\le\vert\lambda\vert\le k}\|\partial^\lambda f\|_{L^p_{\alpha+pk}}
\lesssim \sum_{\vert\kappa\vert=k} \|\partial^\kappa f\|_{L^p_{\alpha+pk}}
     +\sum_{\vert\kappa\vert\le k-1} \vert\partial^\kappa f(0)\vert.
\end{equation}

Assume first that
$f(x)=\sum_{m=k}^\infty S_m(\vert x\vert)f_m(x)$ with the series starting from $k$.
Let $\lambda$ be a multi-index with $\vert\lambda\vert=k-1$.
Then by Lemmas \ref{Lflessnabla} and \ref{Laverage0} and the fact that
$(1-\vert x\vert^2)^{\alpha+kp+p}\le (1-\vert x\vert^2)^{\alpha+kp}$,
we see that
\begin{equation*}
\int_{\mathbb B} \vert \partial^\lambda f(x)\vert^p(1-\vert x\vert^2)^{\alpha+pk}\, d\nu(x)
\lesssim\sum_{\vert\kappa\vert=k}
\int_{\mathbb B} \vert \partial^\kappa f(x)\vert^p(1-\vert x\vert^2)^{\alpha+pk}\, d\nu(x).
\end{equation*}
Repeating this for $\vert\lambda\vert=k-2,\dots,1$ shows
\begin{equation}\label{FirstEst}
\sum_{1\le\vert\lambda\vert\le k-1}\|\partial^\lambda f\|_{L^p_{\alpha+pk}}
\lesssim
\sum_{\vert\kappa\vert=k}\|\partial^\kappa f\|_{L^p_{\alpha+pk}}.
\end{equation}

Next, we remove the assumption on $f$.
If $f(x)=\sum_{m=0}^\infty S_m(\vert x\vert)f_m(x)
=:\sum_{m=k}^\infty S_m(\vert x\vert)f_m(x)+p_{k-1}(x)$,
then \eqref{FirstEst} holds for $f-p_{k-1}$ and so,
\begin{equation*}
\sum_{1\le\vert\lambda\vert\le k-1}\|\partial^\lambda f\|_{L^p_{\alpha+pk}}
\lesssim
\sum_{\vert\kappa\vert=k}\|\partial^\kappa f\|_{L^p_{\alpha+pk}}
+\sum_{1\le\vert\lambda\vert\le k}\|\partial^\lambda p_{k-1}\|_{L^p_{\alpha+pk}}.
\end{equation*}
Now,
$\sum_{1\le \vert\lambda\vert\le k}\|\partial^\lambda p_{k-1}\|_{L^p_{\alpha+pk}}
+\vert p_{k-1}(0)\vert$
is a norm on the finite dimensional vector space $\mathcal P_{k-1}$ and is equivalent
to the norm
$\|p_{k-1}\|_0=\sum_{\vert\lambda\vert\le k-1}\vert\partial^\lambda p_{k-1}(0)\vert$.
Finally, since $\partial^\lambda p_{k-1}(0)=\partial^\lambda f(0)$
for $\vert\lambda\vert\le k-1$ by Lemma \ref{LDerat0}, we obtain \eqref{partlambdaf}.
\end{proof}

\section{Normal Derivative Characterization}

In this section, we prove parts (a)$\Leftrightarrow$(d) of Theorem \ref{TBergman}
and (a)$\Leftrightarrow$(c) of Theorem \ref{TBesov}.
The proof of part (a)$\Rightarrow$(d) of Theorem \ref{TBergman} is same as the
proof of part (a)$\Rightarrow$(c).
The only difference is, in \eqref{reprform}, instead of partial
derivatives we take normal derivatives, and afterwards
refer to Theorem \ref{TNormalKernel} instead of Theorem \ref{TPartialKernel}.
Similarly, proof of (a)$\Rightarrow$(c) of Theorem \ref{TBesov} is the same
as the proof of (a)$\Rightarrow$(b), and is omitted.

We now show part (d)$\Rightarrow$(a) of Theorem \ref{TBergman}.
For this, we will use two lemmas.
First one is a mean-value inequality of Grellier-Jaming
for partial derivatives of $N^k f$ of an $\mathcal H$-harmonic function $f$
(see \cite[Lemma 3.3]{GJ}).
Below, $B(a,r)$ denotes the Euclidean ball of radius $r$ centered at $a$.

\begin{lemma}\label{LGJ}
Let $0<\varepsilon<1$, $0<p<\infty$ and $k,d\in\mathbb N$.
There exists a constant $C=C(n,\varepsilon,p,k,d)$ such that
for all $x\in\mathbb B$, $f\in\mathcal H(\mathbb B)$ and
multi-indices $\lambda$ with $\vert\lambda\vert\le d$,
we have
\begin{equation*}
\vert\partial^\lambda N^k f(x)\vert^p
\le \frac{C}{(1-\vert x\vert)^{n+pd}}
\int_{B(x,(1-\vert x\vert)\varepsilon)} \vert N^k f(y)\vert^p\, d\nu(y).
\end{equation*}
\end{lemma}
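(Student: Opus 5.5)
The estimate is a weighted mean-value inequality for derivatives of $g:=N^k f$. The key structural fact is that $g$ satisfies a linear elliptic equation, so derivative bounds follow from interior estimates on Euclidean balls of radius comparable to $1-\vert x\vert$. The first step is to derive that equation. Using the polar form $r^2\Delta_h=(1-r^2)[(1-r^2)N^2+(n-2)(1+r^2)N+(1-r^2)\Delta_\sigma]$, together with the commutation relations $N\Delta_\sigma=\Delta_\sigma N$ and $[N,\Delta]=2\Delta$, one sees inductively that $N^k f$ solves
\[
(1-\vert x\vert^2)\Delta g+2(n-2)\langle x,\nabla g\rangle+\textup{(lower order terms in } N^{j}f,\ j<k)=0 .
\]
The lower order terms are a nuisance. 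An alternative that avoids them is to let $N^k$ act on the local structure directly, which I describe next.

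Fix $x$ and set $\delta=(1-\vert x\vert)$. On the ball $B=B(x,\delta\varepsilon)$, rescale by $y=x+\delta z$ with $z\in B(0,\varepsilon)$, and put $u(z)=f(x+\delta z)$. Then $u$ satisfies $\Delta u+\delta\,b(z)\cdot\nabla u=0$ with $b$ smooth and uniformly bounded, since $(1-\vert y\vert^2)\sim\delta$ on $B$. The equation therefore has uniformly elliptic smooth coefficients with bounds independent of $x$. Likewise $N=\langle y,\nabla_y\rangle=\delta^{-1}\langle x+\delta z,\nabla_z\rangle$, so $N^k f$ becomes $\delta^{-k}$ times a differential operator of order $k$ in $z$ with bounded coefficients. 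In these coordinates the claim reads
\[
\vert\partial_z^\lambda v(0)\vert^p\le C\int_{B(0,\varepsilon)}\vert v\vert^p ,
\qquad v=\delta^{k}N^kf(x+\delta\,\cdot),
\]
and the factor $\delta^{-(n+pd)}$ comes from the Jacobian $\delta^n$ together with $\partial_y=\delta^{-1}\partial_z$.

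The main obstacle is that $v$ itself does not satisfy a clean equation. I would handle this by commuting: $w_j=(\delta N)^j u$ solves $Lw_j=\sum_{i<j}(\text{bounded-coefficient operators of order}\le2)\,w_i$. Here I would try first to show that, after rescaling, the commutator $[\delta N,L]$ is a combination of $L$ and second-order operators in the lower $w_i$. Then a triangular elliptic system argument applies: interior $L^p$ (or Schauder) estimates on nested balls $B(0,\varepsilon)\supset B(0,\varepsilon/2)\supset\cdots$ bound high $C^d$ norms of $v$ by $\sup$ norms of all $w_i$, $i\le k$.

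One still has to control the lower $w_i$ by $v=w_k$ alone. Along radial segments, $N$ is inverted by integration, e.g. $N^{k-1}f(y)=\int_0^1 N^kf(ty)\,dt/t$ up to its value at $0$. This does not localize, so instead I would use the ODE from the proof of the tangential case, which expresses $Nf$ via $\Delta_\sigma f$ and hence is local. If that fails, the fallback is to cite the statement exactly as in \cite[Lemma 3.3]{GJ}. Finally, since $0<p<\infty$, the passage from $\sup$ to an $L^p$ average for $p<1$ uses the standard iteration trick on nested balls.
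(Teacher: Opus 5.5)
The paper does not prove this lemma; it quotes it from \cite[Lemma 3.3]{GJ}, which is also your fallback. As an independent proof, your sketch has a genuine gap, exactly at the point you flag.

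The rescaling $y=x+\delta z$, the reduction to $\vert\partial_z^\lambda v(0)\vert^p\le C\int_{B(0,\varepsilon)}\vert v\vert^p$, and the nested-ball trick for $p<1$ are all fine. But the estimate itself needs an elliptic equation, or some local a priori bound, in which $v=\delta^kN^kf(x+\delta\,\cdot)$ appears \emph{alone}.

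Your triangular system for $w_j=(\delta N)^ju$ only bounds $v=w_k$ in terms of all of $w_0,\dots,w_k$. In particular, $w_0=u$ can never be controlled by $N^kf$, since adding a constant to $f$ does not change $N^kf$.

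The repair you suggest does not work. The formula from the tangential case,
$Nf(r\zeta)=-r^{2-n}(1-r^2)^{n-2}\int_0^r t^{n-3}(1-t^2)^{2-n}\Delta_\sigma f(t\zeta)\,dt$,
integrates along the whole segment from $0$ to $r\zeta$. It is therefore exactly as non-local as the inversion $\int_0^1 N^kf(ty)\,dt/t$ that you rejected. Moreover, it expresses $Nf$ through $\Delta_\sigma f$, which is not controlled by $N^kf$ on $B(x,\varepsilon\delta)$ either. So the proposal never produces a bound in terms of $N^kf$ only, and that bound is the entire content of the lemma.

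What is missing is a way to use $\Delta_hf=0$ to remove $f$ from the commutator terms, and then to control the intermediate normal derivatives locally. Eliminating $\Delta_\sigma$ with the polar form of $\Delta_h$ gives the operator identity
\[
\Delta_h N=N\Delta_h+2\,\frac{1+\vert x\vert^2}{1-\vert x\vert^2}\,\Delta_h-4(n-2)N .
\]
Hence $\Delta_h(Nf)=-4(n-2)Nf$, a closed equation that is uniformly elliptic after your rescaling; this settles $k=1$.

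Inductively,
$\bigl(\Delta_h+4(n-2)k\bigr)N^kf=\sum_{1\le j<k}b_{kj}N^jf$,
with radial coefficients and $b_{k,k-1}=-4(n-2)k(k-1)\frac{1+\vert x\vert^2}{1-\vert x\vert^2}$. So $f$ itself never enters, whereas the structure you anticipated keeps $w_0$.

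For $k=2$ you can then solve for $Nf$ and obtain the closed fourth-order equation
$(\Delta_h+4(n-2))\bigl[\frac{1-\vert x\vert^2}{1+\vert x\vert^2}(\Delta_h+8(n-2))N^2f\bigr]=0$.
After multiplying through by $\frac{1+\vert x\vert^2}{1-\vert x\vert^2}$ and rescaling, this is uniformly elliptic with bounded coefficients.

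For general $k$ you still have to show that $Nf,\dots,N^{k-1}f$ are controlled on $B(x,\varepsilon\delta)$ by $N^kf$ alone, uniformly in $x$. This is the heart of the lemma, and your proposal does not supply it.
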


\begin{lemma}\label{LNormal}
Let $\alpha>-1$ and $1\leq p<\infty$.
There exists $C=C(n,\alpha,p)$ such that for all $g\in C^2(\mathbb B)$,
we have
\begin{equation}\label{glessN}
\|g-g(0)\|_{L^p_{\alpha}}
\le C
\Bigl(\|Ng\|_{L^p_{\alpha+p}}
+\sup_{\vert x\vert\le 1/2}\vert\nabla Ng(x)\vert\Bigr).
\end{equation}
\end{lemma}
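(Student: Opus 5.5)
We split $\mathbb B$ into the inner ball $\frac12\overline{\mathbb B}$ and the shell $\mathbb B\setminus\frac12\mathbb B$. On the shell we integrate along radii and apply Hardy's inequality \eqref{HardyI}. On the inner ball we use the mean value theorem together with the sup of $\vert\nabla Ng\vert$, which controls $\nabla g$ near the origin.

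\emph{Inner ball.} For $\vert x\vert\le 1/2$ we control $\vert g(x)-g(0)\vert$ by $\sup_{\vert y\vert\le 1/2}\vert\nabla g(y)\vert$, so it is enough to bound $\vert\nabla g\vert$ on $\frac12\mathbb B$. Since $N$ annihilates constants and vanishes at the origin, we cannot recover $\nabla g$ from $Ng$ at $0$ directly. Instead we differentiate. For each $i$,
\begin{equation*}
\partial_i(Ng)=\partial_i g+N(\partial_i g).
\end{equation*}
Fix $\zeta\in\mathbb S$ and set $h(r)=\partial_i g(r\zeta)$. This gives the ODE
\begin{equation*}
h(r)+rh'(r)=\frac{d}{dr}\bigl(rh(r)\bigr)=\partial_i Ng(r\zeta).
\end{equation*}
Because $g\in C^2$, $rh(r)\to0$ as $r\to0$, and integrating yields
\begin{equation*}
r\,\partial_i g(r\zeta)=\int_0^r\partial_iNg(t\zeta)\,dt.
\end{equation*}
Hence $\vert\partial_i g(x)\vert\le\sup_{\vert y\vert\le1/2}\vert\nabla Ng(y)\vert$ for $\vert x\vert\le 1/2$. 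Consequently $\vert g(x)-g(0)\vert\le\frac12\sup_{\vert y\vert\le1/2}\vert\nabla Ng(y)\vert$ on $\frac12\mathbb B$, and $\int_{\frac12\mathbb B}\vert g-g(0)\vert^p\,d\nu_\alpha$ is bounded by a constant times the $p$-th power of this sup. This step is the main subtlety: it is exactly why the hypothesis involves $\nabla Ng$ rather than $Ng$ itself.

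\emph{Shell.} For $\zeta\in\mathbb S$ and $1/2\le r<1$ write
\begin{equation*}
g(r\zeta)-g(0)=\bigl(g(\tfrac12\zeta)-g(0)\bigr)+\int_{1/2}^r\frac{Ng(t\zeta)}{t}\,dt,
\end{equation*}
using $\frac{d}{dt}g(t\zeta)=Ng(t\zeta)/t$. In the integral $1/t\le2$. The first term is bounded by the inner-ball estimate above. For the integral we apply Hardy's inequality \eqref{HardyI} with $u=1/2$ and $g(t)=2\vert Ng(t\zeta)\vert$, noting $(1-r)^\alpha\sim(1-r^2)^\alpha$ there. This gives
\begin{equation*}
\int_{1/2}^1\vert g(r\zeta)-g(0)\vert^p(1-r^2)^\alpha\,dr\lesssim\int_{1/2}^1\vert Ng(r\zeta)\vert^p(1-r^2)^{\alpha+p}\,dr+\sup_{\vert y\vert\le1/2}\vert\nabla Ng(y)\vert^p.
\end{equation*}
We then insert the weight $nr^{n-1}\sim1$ on $[1/2,1)$, integrate over $\mathbb S$ in polar coordinates, and add the inner-ball bound. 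This yields \eqref{glessN} after taking $p$-th roots, with constants depending only on $n,\alpha,p$.
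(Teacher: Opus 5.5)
Your proof is correct, but it is organized differently from the paper's. The paper does not split the ball first. It writes $g(r\zeta)-g(0)=\int_0^r Ng(t\zeta)\,t^{-1}\,dt$ along the whole radius and applies Hardy's inequality \eqref{HardyI} with $u=0$. This yields the bound $\int_{\mathbb B}|Ng(x)|^p|x|^{-(p+n-1)}(1-|x|^2)^{\alpha+p}\,d\nu(x)$, and only then does the paper split at $|x|=1/2$. The singular weight on the inner ball is absorbed by noting that $Ng(0)=0$. The mean value inequality applied to $Ng$ itself then gives $|Ng(x)|\le |x|\sup_{|y|\le 1/2}|\nabla Ng(y)|$, which leaves the integrable radial density $nr^{n-1}\cdot r^{-(n-1)}$.

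You instead start Hardy's inequality at $u=1/2$, where $1/t\le 2$ and no singular weight appears. The price is the boundary term $g(\frac12\zeta)-g(0)$ and the inner-ball contribution. You control both through the pointwise bound $|\nabla g|\lesssim\sup_{|y|\le 1/2}|\nabla Ng(y)|$ on $\frac12\mathbb B$, obtained from the identity $\frac{d}{dr}\bigl(r\nabla g(r\zeta)\bigr)=\nabla Ng(r\zeta)$.

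The paper's route is shorter and needs no commutation identity $\partial_i N=\partial_i+N\partial_i$. Yours keeps Hardy's inequality away from $r=0$ and yields a stronger intermediate fact: a sup bound for $\nabla g$, and hence for $g-g(0)$, near the origin.

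A cosmetic point: bounding each $\partial_i g$ separately gives $|\nabla g|\le\sqrt n\,\sup|\nabla Ng|$, not the constant you wrote. Applying the identity to the vector $\nabla g$ directly gives the constant $1$. Either way this is harmless.
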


\begin{proof}
Note first that for $0<r<1$ and $\zeta\in\mathbb S$,
\begin{equation*}
g(r\zeta)-g(0)=\int_0^r\frac{Ng(t\zeta)}{t}\,dt.
\end{equation*}
Now, integrating in polar coordinates, using above, and then Hardy's
inequality \eqref{HardyI} shows that
\begin{align*}
\|g-g(0)\|^p_{L^p_\alpha}
&=\int_{\mathbb S}\int_0^1 nr^{n-1}(1-r^2)^\alpha\vert g(r\zeta)-g(0)\vert^p
\,dr\,d\sigma(\zeta)\\
&\lesssim \int_{\mathbb S}\int_0^1 (1-r)^\alpha
\Bigl(\int_0^r\frac{\vert Ng(t\zeta)\vert}{t}\,dt\Bigr)^p
\,dr\,d\sigma(\zeta)\\
&\lesssim\int_{\mathbb S}\int_0^1 (1-r)^{\alpha+p}
\frac{\vert Ng(r\zeta)\vert^p}{r^p}\,dr\,d\sigma(\zeta)\\
&\lesssim \int_{\mathbb B} \frac{\vert Ng(x)\vert^p}{\vert x\vert^{p+n-1}}
(1-\vert x\vert^2)^{\alpha+p}\, d\nu(x)\\
&=\int_{\frac{1}{2}\mathbb B} \frac{\vert Ng(x)\vert^p}{\vert x\vert^{p+n-1}}
\,d\nu_{\alpha+p}(x)
+\int_{\mathbb B\backslash\frac{1}{2}\mathbb B}
\frac{\vert Ng(x)\vert^p}{\vert x\vert^{p+n-1}}\,d\nu_{\alpha+p}(x)
=:I_1+I_2.
\end{align*}
In the second integral we have $\vert x\vert\ge 1/2$ and so,
$I_2\lesssim \|Ng\|^p_{L^p_{\alpha+p}}$.
For the first integral, observe that for $\vert x\vert \le 1/2$,
by Lagrange's mean-value inequality,
\begin{equation*}
\vert Ng(x)\vert\le \vert x\vert\sup_{\vert y\vert\le 1/2}\vert\nabla Ng(y)\vert,
\end{equation*}
and therefore,
\begin{equation*}
I_1\le \sup_{\vert y\vert\le 1/2}\vert\nabla Ng(y)\vert^p
\int_0^{1/2}nr^{n-1}\frac{r^p}{r^{p+n-1}}(1-r^2)^{\alpha+p}\,dr
\lesssim \sup_{\vert y\vert\le 1/2}\vert\nabla Ng(y)\vert^p.
\qedhere
\end{equation*}
\end{proof}

\begin{proof}[Proof of Theorem \ref{TBergman} (d)$\Rightarrow$(a)]
Suppose $f\in\mathcal H(\mathbb B)$ and $N^k f\in L^p_{\alpha+pk}$ for some $k\ge 1$.
We will apply Lemma \ref{LNormal} with $g=N^{k-1}f$.
First, we estimate the last term in \eqref{glessN}.
Lemma \ref{LGJ} with $\varepsilon=1/4$ shows that there
exists $C=C(n,p,k)$ such that
\begin{equation*}
\vert\nabla N^k f(x)\vert^p
\le \frac{C}{(1-\vert x\vert)^{n+p}}
\int_{B(x,(1-\vert x\vert)/4)} \vert N^k f(y)\vert^p\, d\nu(y).
\end{equation*}
For $\vert x\vert\le 1/2$, we have
$B(x,(1-\vert x\vert)/4)\subset B(0,3/4)$, and therefore,
\begin{equation*}
\sup_{\vert x\vert\le 1/2}\vert\nabla N^{k}f(x)\vert^p
\lesssim \int_{B(0,3/4)}
\vert N^k f(y)\vert^p (1-\vert y\vert^2)^{\alpha+kp}\, d\nu(y)
\lesssim \|N^k f\|^p_{L^p_{\alpha+pk}}.
\end{equation*}
Thus, Lemma \ref{LNormal} with $g=N^{k-1}f$ implies
\begin{equation*}
\|N^{k-1}f\|_{L^p_{\alpha+p(k-1)}}
\lesssim \|N^{k}f\|_{L^p_{\alpha+pk}}.
\end{equation*}
Repeating this argument gives
$\|f-f(0)\|_{\mathcal B^p_\alpha}\lesssim \|N^k f\|_{L^p_{\alpha+pk}}$.
\end{proof}

\begin{proof}[Proof of Theorem \ref{TBesovPartial} (c)$\Rightarrow$(a)]
In this part we do not need the restriction \eqref{Condalpha}.
Suppose that $\alpha\in\mathbb R$, $1\le p<\infty$, $f\in\mathcal H(\mathbb B)$
and there exists $k\in\mathbb N$ with $\alpha+pk>-1$ such that
$N^k f\in L^p_{\alpha+pk}$.
We show that $f\in\mathcal B^p_\alpha$ and there exists $C=C(n,\alpha,p,k)$
such that
\begin{equation}\label{NormfLess}
\|f-f(0)\|_{\mathcal B^p_\alpha}\le C\|N^k f\|_{L^p_{\alpha+pk}}.
\end{equation}
For this we show that
$\|T^k f\|_{L^p_{\alpha+pk}}\le C\|N^k f\|_{L^p_{\alpha+pk}}$
for every $T^k\in\mathcal T^k$, which by Defn.~\ref{DBesov}
implies $f\in\mathcal B^p_\alpha$ and
$\|f-f(0)\|_{\mathcal B^p_\alpha}
\sim\sum_{T^k\in\mathcal T^k}\|T^k f\|_{L^p_{\alpha+pk}}$
satisfies \eqref{NormfLess}.

First, applying Lemma \ref{LGJ} with $\varepsilon=1/3$ and $d=k$,
we see that there exists $C=C(n,p,k)$ such that for all
multi-indices $\lambda$ with $\vert\lambda\vert\le k$ and
$x\in\mathbb B$, we have
\begin{equation*}
\bigl\vert\partial^\lambda N^k f(x)\bigr\vert^p
\le \frac{C}{(1-\vert x\vert)^{n+pk}}
\int_{B(x,(1-\vert x\vert)/3)} \vert N^k f(y)\vert^p\,d\nu(y).
\end{equation*}
Thus, for all $\vert\lambda\vert\le k$,
\begin{equation*}
\|\partial^\lambda N^k f\|^p_{L^p_{\alpha+2pk}}
\lesssim \int_{\mathbb B} \int_{B(x,(1-\vert x\vert)/3)}
\vert N^k f(y)\vert^p\,d\nu(y)(1-\vert x\vert^2)^{\alpha+pk-n}\,d\nu(x).
\end{equation*}
Note that if $y\in B(x,(1-\vert x\vert)/3)$, then $x\in B(y,(1-\vert y\vert)/2)$.
Therefore, changing the order of integration, we deduce
\begin{equation*}
\|\partial^\lambda N^k f\|^p_{L^p_{\alpha+2pk}}
\lesssim \int_{\mathbb B}\vert N^k f(y)\vert^p
\int_{B(y,(1-\vert y\vert)/2)}(1-\vert x\vert^2)^{\alpha+pk-n}\,d\nu(x)\,d\nu(y).
\end{equation*}
Because
$1-\vert x\vert^2\sim1-\vert y\vert^2$ for $x\in B(y,(1-\vert y\vert)/2)$
and $\nu(B(y,(1-\vert y\vert)/2))\sim (1-\vert y\vert^2)^n$, we conclude that
\begin{equation*}
\|\partial^\lambda N^k f\|_{L^p_{\alpha+2pk}}
\lesssim \|N^k f\|_{L^p_{\alpha+pk}}
\end{equation*}
for all $\vert\lambda\vert\le k$.
Since
$\vert T^k g(x)\vert
\le C\sum_{1\le\vert\lambda\vert\le k}\vert\partial^\lambda g(x)\vert$
for $x\in\mathbb B$, this implies
\begin{equation*}
\|T^k N^k f\|_{L^p_{\alpha+2pk}}
\lesssim \|N^k f\|_{L^p_{\alpha+pk}},
\end{equation*}
and noting that $T^k N^k=N^k T^k$ because $N$ commutes with every $T_{i,j}$,
we obtain
\begin{equation*}
\|N^k(T^k f)\|_{L^p_{\alpha+2pk}}
\lesssim \|N^k f\|_{L^p_{\alpha+pk}}.
\end{equation*}
Finally, since $T^k f\in\mathcal H(\mathbb B)$ and $\alpha+pk>-1$, we are
in the Bergman zone, and part (d)$\Rightarrow$(a) of Theorem \ref{TBergman}
implies $T^k f\in\mathcal B^p_{\alpha+pk}$ and
\begin{equation*}
\|T^k f\|_{L^p_{\alpha+pk}}
\lesssim \|N^k(T^k f)\|_{L^p_{\alpha+2pk}}
\lesssim \|N^k f\|_{L^p_{\alpha+pk}}.
\end{equation*}
This finishes the proof.
\end{proof}

\end{document}